\numberwithin{equation}{section}
\theoremstyle{plain}
\newtheorem{theorem}{Theorem}[section]
\newtheorem{lemma}[theorem]{Lemma}
\newtheorem{corollary}[theorem]{Corollary}
\theoremstyle{definition}
\newtheorem{remark}[theorem]{Remark}
\newcommand{\K}{\mathbb{K}}
\newcommand{\C}{\mathbb{C}}
\newcommand{\R}{\mathbb{R}}
\newcommand{\N}{\mathbb{N}}
\newcommand{\cC}{\mathcal{C}}
\newcommand{\cL}{\mathcal{L}}
\newcommand{\coloneq}{\mathrel{\mathop:}=}
\newcommand{\eqcolon}{=\mathrel{\mathop:}}
\newcommand{\rmd}{\textrm{d}}
\newcommand{\ii}{{\operatorname{i}}}
\newcommand{\conj}[1]{\overline{#1}}
\DeclarePairedDelimiter{\abs}{\lvert}{\rvert}
\DeclarePairedDelimiter{\norm}{\lVert}{\rVert}
\DeclarePairedDelimiter{\sprod}{\langle}{\rangle}
\DeclarePairedDelimiter{\cc}{[}{]}
\DeclarePairedDelimiter{\oo}{]}{[}
\DeclareMathOperator{\re}{Re}
\DeclareMathOperator{\im}{Im}
\DeclareMathOperator{\diag}{diag}
\DeclareMathOperator{\spann}{span}
\newcommand{\ltwospace}{\mathcal{L}^2(\Xi,\rho)}
\author{Roland Pulch\footnotemark[1] \and Olivier S\`{e}te\footnotemark[2]}
\title{The Helmholtz equation with uncertainties in the wavenumber}
\date{September 29, 2022}
\begin{document}
\maketitle

\renewcommand{\thefootnote}{\fnsymbol{footnote}}

\footnotetext[1]{Institute of Mathematics and Computer Science, Universit\"at 
Greifswald, Walther-Rathenau-Stra\ss{}e~47, 17489 Greifswald, Germany.
\texttt{roland.pulch@uni-greifswald.de}}

\footnotetext[2]{Institute of Mathematics and Computer Science, Universit\"at 
Greifswald, Walther-Rathenau-Stra\ss{}e~47, 17489 Greifswald, Germany.
\texttt{olivier.sete@uni-greifswald.de}.
ORCID: 0000-0003-3107-3053}

\renewcommand{\thefootnote}{\arabic{footnote}}


\begin{abstract}
We investigate the Helmholtz equation with suitable boundary conditions and 
uncertainties in the wavenumber. 
Thus the wavenumber is modeled as a random variable or a random field. 
We discretize the Helmholtz equation using finite differences in space, which 
leads to a linear system of algebraic equations including random variables. 
A stochastic Galerkin method yields a deterministic linear system of algebraic 
equations. 
This linear system is high-dimensional, sparse and complex symmetric but, in 
general, not hermitian.
We therefore solve this system iteratively with GMRES and propose two 
preconditioners: a complex shifted Laplace preconditioner and a mean value 
preconditioner.
Both preconditioners reduce the number of iteration steps as well as the 
computation time in our numerical experiments.
\end{abstract}

\paragraph{Keywords:} Helmholtz equation, polynomial chaos, 
stochastic Galerkin method,
GMRES, complex shifted Laplace preconditioner, mean value preconditioner

\paragraph{AMS Subject Classification (2020):}
65N30, 
65C20, 
35R60 


\section{Introduction}

The Helmholtz equation is a linear partial differential equation (PDE), whose 
solutions are time-harmonic states of the wave equation, 
see~\cite{GriffithsDoldSilvester2015,Polyanin2002}. 
Important applications of this model are given in acoustics and 
electromagnetics~\cite{ColtonKress}. 
The Helmholtz equation includes a wavenumber, which is either a constant 
parameter or a space-dependent function. 
Furthermore, boundary conditions are imposed on the spatial domain. 

We consider uncertainties in the wavenumber. 
Thus the wavenumber is replaced by a random variable or a spatial random field 
to quantify the uncertainties. 
The solution of the Helmholtz equation changes into a random field,  which can 
be expanded into the (generalized) polynomial chaos, see~\cite{Xiu2010}. 
We employ the stochastic Galerkin method to compute approximations of the 
unknown coefficient functions. 
Stochastic Galerkin methods were used for linear PDEs of different types 
including random variables, for example, 
see~\cite{Gittelson2013,YoussefPulch2021} on elliptic type, 
\cite{GottliebXiu,PulchXiu2012} on hyperbolic type, 
and~\cite{PulchvanEmmerich2009,XiuShen} on parabolic type.
Wang et al.~\cite{Wang-etal} applied a multi-element stochastic 
Galerkin method to solve the Helmholtz equation including random variables. 
We investigate the ordinary stochastic Galerkin method, 
which is efficient if the wavenumbers are not close to resonance. 

The stochastic Galerkin method transforms the random-dependent 
Helmholtz equation into a deterministic system of linear PDEs. 
Likewise, the original boundary conditions yield boundary conditions 
for this system. 
We examine the system of PDEs in one and two space dimensions.  
A finite difference method, see~\cite{GrossmannRoos}, 
produces a high-di\-men\-sional linear system of algebraic equations. 
When considering absorbing boundary conditions, 
the coefficient matrices are complex-valued and 
non-hermitian. 

We focus on the numerical solution of the linear systems of algebraic 
equations.  The dimension of these linear systems rapidly grows for increasing 
numbers of random variables.  Hence we use iterative methods like 
GMRES~\cite{SaadSchultz1986} in the numerical solution.
The efficiency of an iterative method strongly depends on an 
appropriate preconditioning of the linear systems.
We propose two preconditioners in the general case where the wavenumber can 
depend on space and on multiple random variables:
a complex shifted Laplace preconditioner, see~\cite{ErlanggaVuikOosterlee2004, 
GarciaNabben2018}, and a mean value preconditioner, 
see~\cite{GhanemKruger,Wang-etal}.
Statements on the location of spectra and estimates of matrix norms 
are shown.  
Furthermore, results of numerical computations are presented for both settings.

The article is organized as follows.  The stochastic Helmholtz equation is 
introduced in Section~\ref{sect:setting} and discretized in 
Section~\ref{sect:stochastic_Helmholtz}.  We discuss the complex shifted 
Laplace preconditioner in Section~\ref{sect:csl} and the mean value 
preconditioner in Section~\ref{sect:mean-value}.
Sections~\ref{sect:experiments1d} and~\ref{sect:experiments2d} contain 
numerical experiments in one and two spatial dimensions, respectively, 
which show the effectiveness of the preconditioners.
An appendix includes the detailed formulas of the discretizations in space.

\section{Problem Definition} \label{sect:setting}

We illustrate the stochastic problem associated to the Helmholtz equation. 

\subsection{Helmholtz equation}

The Helmholtz equation is a PDE of the form
\begin{equation} \label{eqn:de_orig}
- \Delta u - k^2 u = f \quad \text{in } Q  
\end{equation}
with an (open) spatial domain $Q \subseteq \R^d$ and given source term $f : 
Q \to \R$. 
The wavenumber $k$ is either a positive constant or 
a function $k : \overline{Q} \rightarrow \R_+$.
The unknown solution is $u : \overline{Q} \rightarrow \mathbb{K}$
with either $\mathbb{K} = \R$ or $\mathbb{K} = \C$.
Here $\Delta = \sum_{j=1}^d \frac{\partial^2}{\partial x_j^2}$ denotes 
the Laplace operator with respect to $x = [x_1, \ldots, x_d]^\top \in \R^d$.

Often homogeneous Dirichlet boundary conditions, i.e., 
\begin{equation} \label{eqn:bc_orig_dirichlet}
u = 0 \quad \text{on } \partial Q ,
\end{equation} 
are applied for simplicity.
Alternatively, absorbing boundary conditions read as
\begin{equation}
\partial_n u - \ii k u = 0 \quad \text{on } \partial Q ,
\label{eqn:bc_orig}
\end{equation}
where $\partial_n$ denotes the derivative with respect to the outward 
normal of $Q$ and $\ii = \sqrt{-1}$ is the imaginary unit.

\subsection{Stochastic modeling}
We consider uncertainties in the wavenumber. 
A simple model to include a variation of the wavenumber is to 
replace the constant~$k$ by a random variable on a probability 
space $(\Omega,\mathcal{A},P)$. 
We write $k=k(\xi)$, where $\xi : \Omega \rightarrow \R$ is 
some random variable with a traditional probability distribution. 
More generally, the wavenumber can be a space-dependent function 
on~$\overline{Q}$ including a multidimensional random variable 
$\xi : \Omega \rightarrow \Xi$ with $\Xi \subseteq \R^s$. 
We assume $\xi = (\xi_1,\ldots,\xi_s)^\top$ with independent 
random variables $\xi_{\ell}$ for $\ell=1,\ldots,s$.
Now the wavenumber becomes a random field
\begin{equation} \label{eqn:random_wavenumber} 
k(x,\xi) = k_0(x) + \sum_{\ell=1}^s \xi_{\ell} k_{\ell}(x) 
\end{equation}
with given functions 
$k_{\ell} : \overline{Q} \rightarrow \R$ for $\ell=0,1,\ldots,s$, 
as in~\cite{Wang-etal}. 
A truncation of a Karhunen-Lo{\`e}ve expansion, 
see~\cite[p.~17]{GhanemSpanos}, also yields a random input of 
the form~\eqref{eqn:random_wavenumber}.
Consequently, the solution of the deterministic  
Helmholtz equation~\eqref{eqn:de_orig} changes into a random field 
$u : \overline{Q} \times \Xi \rightarrow \mathbb{K}$.
We write $u(x,\xi)$ to indicate the dependence of the solution 
on space as well as the random variables. 

We assume that each random variable~$\xi_{\ell}$ has a 
probability density function~$\rho_{\ell}$. 
Since the random variables are independent, 
the product $\rho = \rho_1 \cdots \rho_s$ is the joint 
probability density function. 
Without loss of generality, let $\rho(\xi) > 0$ for almost 
all $\xi \in \Xi$.
The expected value of a measurable function $f : \Xi \rightarrow \mathbb{K}$ 
depending on the random variables is
\begin{equation*}
\mathbb{E}(f) = \int_{\Omega} f(\xi(\omega)) \; \rmd P(\omega) =
\int_{\Xi} f(\xi) \, \rho(\xi) \, \rmd \xi,
\end{equation*}
if the integral exists.
The inner product of two square-integrable functions $f,g$ is 
\begin{equation} \label{eqn:inner_product} 
\sprod{f,g} = \int_{\Xi} f(\xi) \, \conj{g(\xi)} \, \rho(\xi) \, \rmd \xi.
\end{equation}
In the following, $\ltwospace$ denotes the Hilbert space 
of square-integrable functions. 
The associated norm is $\norm{f}_{\ltwospace} = \sqrt{\sprod{f,f}}$.

Later we will focus on uniformly distributed random variables 
$\xi_{\ell} : \Omega \to \cc{-1,1}$. 
In this case, the joint probability density function is constant, i.e., 
$\Xi = [-1,1]^s$ and $\rho \equiv 2^{-s}$.

\subsection{Polynomial chaos expansions}
\label{sect:PCexpansion}

We assume that there is an orthonormal polynomial basis 
$(\phi_i)_{i \in \N_0}$ in $\ltwospace$. 
Thus it holds that
\begin{equation*}
\sprod{\phi_{i} , \phi_{j}} = \delta_{i,j}
=
\begin{cases}
1 & \text{for } i = j \\
0 & \text{for } i \neq j \\
\end{cases}
\end{equation*}
with the inner product~\eqref{eqn:inner_product}.
In the case of uniform probability distributions, 
the multivariate functions $\phi_i$ are products of the 
(univariate) Legendre polynomials. 
We assume that $\phi_0 \equiv 1$.
The number~$m+1$ of multivariate polynomials in $s$~variables 
up to a total degree~$r$ is
\begin{equation} \label{eqn:number_basis_polynomial}
m+1 = \frac{(s+r)!}{s! \, r!},
\end{equation}
see~\cite[p.~65]{Xiu2010}.
This number grows fast for increasing~$r$ or~$s$.

Let $u(x,\cdot) \in \ltwospace$ for each $x \in \overline{Q}$. 
The polynomial chaos (PC) expansion is
\begin{equation} \label{eqn:pc}
u(x, \xi) = \sum_{i=0}^\infty v_i(x) \phi_i(\xi)
\end{equation}
with (a priori unknown) coefficient functions 
\begin{equation} \label{eqn:coeff} 
v_i(x) = \sprod{u(x, \xi), \phi_i(\xi)} \quad \text{for } i \in \N_0.
\end{equation}
The series~\eqref{eqn:pc} converges in $\ltwospace$ 
pointwise for $x \in \overline{Q}$. 
If the wavenumber~$k$ is an analytic function of the random variables, 
then the rate of convergence is exponentially fast 
for traditional probability distributions.


\section{Discretization of the stochastic Helmholtz equation}
\label{sect:stochastic_Helmholtz}

We consider the stochastic Helmholtz equation
\begin{equation} \label{eqn:stochastic_helmholtz}
- \Delta u(x, \xi) - k(x, \xi)^2 u(x, \xi) = f(x), \quad x \in Q \subseteq \R^d,
\end{equation}
with given source term $f : Q \to \R$ and random wavenumber $k : 
\overline{Q} \times \Xi \to \R_+$, together with either homogeneous Dirichlet 
boundary conditions
\begin{equation} \label{eqn:stochastic_bc_dir}
u(x, \xi) = 0, \quad x \in \partial Q, \quad \xi \in \Xi,
\end{equation}
or with absorbing boundary conditions
\begin{equation} \label{eqn:stochastic_bc_abs}
\partial_n u(x, \xi) - \ii k(x, \xi) u(x, \xi) = 0, \quad x \in \partial Q, 
\quad \xi \in \Xi.
\end{equation}
All derivatives are taken with respect to $x$.
We discretize this boundary value problem in two steps, with a finite 
difference method (FDM) in space and the stochastic Galerkin method in the 
random-dependent part.
The steps can be done in any order.  We first give an overview of the procedure 
when beginning with the FDM in Section~\ref{sect:FDM_Galerkin}.
In Section~\ref{sect:Galerkin_FDM}, we discuss the discretization when 
beginning with the stochastic Galerkin method.

\subsection{FDM and stochastic Galerkin method}
\label{sect:FDM_Galerkin}

A spatial discretization of the boundary value problem with a FDM leads 
to a (stochastic) linear algebraic system
\begin{equation} \label{eqn:LGS_FD}
S(\xi) U(\xi) = F_0
\end{equation}
with $S(\xi) \in \K^{n,n}$ for $\xi \in \Xi$ (see 
Section~\ref{sect:discretization} for details) and constant vector $F_0 \in 
\R^n$.
In a second step, we consider a PC approximation of $U(\xi)$ of the form
\begin{equation} \label{eqn:Galerkin_approx}
\widetilde{U}_m(\xi) = \sum_{i=0}^m \phi_i(\xi) V_i, \quad
\text{where } V_i = \begin{bmatrix} v_{\ell, i} \end{bmatrix}_{\ell=1}^n \in 
\R^n \text{ for } i = 0, 1, \ldots, m,
\end{equation}
and $\phi_i$ are polynomials as in Section~\ref{sect:PCexpansion}.
The coefficient vectors $V_i$ are determined by the orthogonality of the 
residual
\begin{equation}
R_m(\xi) = S(\xi) \widetilde{U}_m(\xi) - F_0.
\end{equation}
to the subspace $\spann \{ \phi_0, \phi_1, \ldots, \phi_m \}$ 
with respect to the inner product 
$\sprod{\cdot, \cdot}$ in~\eqref{eqn:inner_product}, i.e., by
$\sprod{R_m(\xi), \phi_i(\xi)} = 0$ for $i = 0, 1, \ldots, m$.
Here the inner product is taken component-wise.  
The orthogonality condition is equivalent to
\begin{equation} \label{eqn:Galerkin_orthogonality}
\sprod{S(\xi) \widetilde{U}_m(\xi), \phi_i(\xi)} = \sprod{1, \phi_i(\xi)} F_0
= \delta_{i,0} F_0, \quad i = 0, 1, \ldots, m,
\end{equation}
due to $\phi_0 \equiv 1$.
This leads to a (deterministic) linear algebraic system
\begin{equation} \label{eqn:LGS_from_FD_G}
A V = F, \quad V = \begin{bmatrix} V_0 \\ \vdots \\ V_m \end{bmatrix}, \quad
F = \begin{bmatrix} F_0 \\ \vdots \\ F_m \end{bmatrix},
\end{equation}
where the stochastic Galerkin projection 
$A \in \K^{(m+1)n, (m+1)n}$ is a block 
matrix with $m+1$ blocks of size $n \times n$, 
and $F_i = 0 \in \R^n$ for $i = 1, \ldots, m$.

\begin{remark} \label{rem:galerkin_approx}
The Galerkin approximation~\eqref{eqn:Galerkin_approx} can be interpreted as a 
spatial discretization of a Galerkin approximation $\widetilde{u}_m(x, 
\xi) = \sum_{i=0}^m v_i(x) \phi_i(\xi)$ of $u(x, \xi)$.  Evaluating 
$\widetilde{u}_m$ at discretization points $x_1, \ldots, x_n$ yields
\begin{equation} \label{eqn:Galerkin_approx_discretized}
\begin{bmatrix} \widetilde{u}_m(x_1, \xi) \\ \vdots \\ \widetilde{u}_m(x_n, 
\xi) \end{bmatrix}
= \sum_{i=0}^m \phi_i(\xi) \begin{bmatrix} v_i(x_1) \\ \vdots \\ v_i(x_n) 
\end{bmatrix}.
\end{equation}
Hence $V_i$ in~\eqref{eqn:Galerkin_approx} can be interpreted as a
discretization of $v_i(x)$ by $v_{\ell,i} = v_i(x_\ell)$.
\end{remark}

As it turns out, the matrix $S(\xi)$ is a (complex) linear combination of 
symmetric positive (semi-)definite matrices.  The following lemma shows that 
this structure is preserved in the stochastic Galerkin method;
see~\cite[Lem.~1]{Pulch2019} and its proof.
These properties of the matrix $S$ and thus $A$ will be essential 
for our analysis of shifted Laplace preconditioners in Section~\ref{sect:csl}.

\begin{lemma} \label{lem:galerkin_projection}
Let $A(\xi) = \begin{bmatrix} a_{\mu, \nu}(\xi) \end{bmatrix}_{\mu,\nu} \in 
\R^{n,n}$ with $a_{\mu,\nu} \in \cL^2(\Xi, \rho)$, and $V \in \R^n$.
Define
\begin{equation}
A_{ij} \coloneq \begin{bmatrix} \sprod{a_{\mu, \nu}(\xi) \phi_i(\xi), 
\phi_j(\xi)} \end{bmatrix}_{\mu,\nu} \in \R^{n,n},
\quad i, j = 0, 1, \ldots, m,
\end{equation}
and the stochastic Galerkin projection
\begin{equation}
A \coloneq \begin{bmatrix} A_{ij} \end{bmatrix}_{i,j} \in \R^{(m+1)n, (m+1)n}.
\end{equation}
We then obtain for $i, j = 0, 1, \ldots, m$
\begin{equation}
\sprod{A(\xi) \phi_i(\xi) V, \phi_j(\xi)} = A_{ij} V,
\end{equation}
where the inner product is taken component-wise.
Additionally, $A_{ij} = A_{ji}$. 
Moreover, if $A(\xi)$ is symmetric, then $A$ is symmetric, 
and if $A(\xi)$ is symmetric positive (semi-)definite for almost all 
$\xi \in \Xi$ then $A$ is symmetric positive (semi-)definite.
\end{lemma}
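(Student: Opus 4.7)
The plan is to derive everything from one computation: rewrite $V^\top A V$ as a weighted integral of quadratic forms of $A(\xi)$, which immediately gives the definiteness statements, while the identity for $\langle A(\xi)\phi_i V,\phi_j\rangle$ and the symmetry $A_{ij}=A_{ji}$ fall out as direct bookkeeping.

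First I would verify the identity $\sprod{A(\xi)\phi_i V,\phi_j} = A_{ij} V$ by expanding the $\mu$-th component: by linearity of the inner product and the fact that $V_\nu$ is a scalar, the $\mu$-th entry of the left-hand side is $\sum_\nu V_\nu \sprod{a_{\mu,\nu}(\xi)\phi_i(\xi),\phi_j(\xi)} = \sum_\nu V_\nu (A_{ij})_{\mu,\nu}$, which is the $\mu$-th entry of $A_{ij}V$. Next, for $A_{ij}=A_{ji}$, I would note that because $a_{\mu,\nu}$ and the basis polynomials $\phi_i,\phi_j$ are real-valued, the conjugation in~\eqref{eqn:inner_product} is inert, and the integrand $a_{\mu,\nu}(\xi)\phi_i(\xi)\phi_j(\xi)\rho(\xi)$ is symmetric under swapping $i\leftrightarrow j$; hence $(A_{ij})_{\mu,\nu}=(A_{ji})_{\mu,\nu}$. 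Symmetry of $A$ as a block matrix under the additional assumption that $A(\xi)$ is symmetric then follows by combining $A_{ij}=A_{ji}$ with $(A_{ij})_{\mu,\nu}=(A_{ij})_{\nu,\mu}$, which holds because $a_{\mu,\nu}(\xi)=a_{\nu,\mu}(\xi)$ a.e.

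For the definiteness claims I would carry out the key computation: for $V=[V_0^\top,\ldots,V_m^\top]^\top\in\R^{(m+1)n}$, interchange sum and integral to get
\begin{equation*}
V^\top A V \;=\; \sum_{i,j=0}^m V_i^\top A_{ij}V_j
\;=\; \int_{\Xi} W(\xi)^\top A(\xi) W(\xi)\,\rho(\xi)\,\rmd\xi,
\end{equation*}
where $W(\xi)\coloneq \sum_{i=0}^m \phi_i(\xi)V_i \in \R^n$. If $A(\xi)$ is positive semidefinite for almost all $\xi\in\Xi$, the integrand is nonnegative a.e.\ and the integral is $\geq 0$, giving positive semidefiniteness of $A$.

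The one point that requires a small argument is strict definiteness: given $V\neq 0$ and $A(\xi)$ positive definite a.e., I must show the integral is strictly positive. The integrand vanishes only where $W(\xi)=0$, so it suffices to show $W(\xi)\neq 0$ on a set of positive $\rho$-measure. This follows from linear independence of the basis $\{\phi_0,\ldots,\phi_m\}$ in $\ltwospace$: if $W\equiv 0$ a.e., then for every component index $\mu$, $\sum_i (V_i)_\mu \phi_i(\xi)=0$ in $\ltwospace$, forcing $(V_i)_\mu=0$ for all $i$, contradicting $V\neq 0$. Since $A(\xi)$ is positive definite a.e.\ and $\rho>0$ a.e., the integrand is then strictly positive on a set of positive measure, so $V^\top A V>0$. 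The mildly delicate step is precisely this use of $L^2$-linear independence of the $\phi_i$ together with $\rho>0$ a.e., but no further analytic input is needed.
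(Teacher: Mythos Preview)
Your proof is correct. The paper does not actually prove this lemma in-text; it cites \cite[Lem.~1]{Pulch2019} and its proof. Your argument---computing $V^\top A V$ as $\int_\Xi W(\xi)^\top A(\xi) W(\xi)\,\rho(\xi)\,\rmd\xi$ with $W(\xi)=\sum_i \phi_i(\xi)V_i$, and then reading off semidefiniteness from nonnegativity of the integrand and strict definiteness from the $\ltwospace$-linear independence of the $\phi_i$---is precisely the standard route for results of this type and is almost certainly what the cited reference does as well. The handling of the strict case via linear independence (so that $W$ cannot vanish a.e.\ unless $V=0$) together with $\rho>0$ a.e.\ is the right ingredient, and you have identified it clearly.
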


\begin{corollary} \label{cor:constant_galerkin_projection}
In the notation of Lemma~\ref{lem:galerkin_projection}, if $A(\xi) = A_0$ is 
independent of $\xi$, then $A_{ij} = \delta_{ij} A_0$ and $A = I_{m+1} \otimes 
A_0$, with the identity matrix $I_{m+1} \in \K^{m+1, m+1}$ and the Kronecker 
product.
\end{corollary}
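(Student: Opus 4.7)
The plan is to read off the statement directly from the definition of $A_{ij}$ in Lemma~\ref{lem:galerkin_projection}, using only that constants factor through the inner product~\eqref{eqn:inner_product} and that the basis $(\phi_i)_{i \in \N_0}$ is orthonormal.

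First I would fix indices $\mu,\nu \in \{1,\ldots,n\}$ and examine the corresponding scalar entry of the block $A_{ij}$. Since $A(\xi) \equiv A_0$, the function $a_{\mu,\nu}(\xi)$ is the constant $(A_0)_{\mu,\nu}$, so linearity of $\sprod{\cdot,\cdot}$ in its first argument together with the orthonormality relation $\sprod{\phi_i,\phi_j} = \delta_{ij}$ gives
$$\sprod{a_{\mu,\nu}(\xi)\phi_i(\xi),\phi_j(\xi)} = (A_0)_{\mu,\nu}\sprod{\phi_i,\phi_j} = (A_0)_{\mu,\nu}\,\delta_{ij}.$$
Assembling these entries over all $\mu,\nu$ yields $A_{ij} = \delta_{ij}\,A_0$, which is the first claim.

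For the second claim I would simply unfold the block structure: the matrix $A = [A_{ij}]_{i,j=0}^m$ has the block $A_0$ on every diagonal position $i=j$ and the zero matrix on every off-diagonal position, which is by definition the Kronecker product $I_{m+1} \otimes A_0$.

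I do not anticipate any real obstacle here; the corollary is essentially a notational observation once Lemma~\ref{lem:galerkin_projection} is in hand. The only mildly delicate point is the step ``constants factor out of $\sprod{\cdot,\cdot}$'', which is immediate from the integral representation~\eqref{eqn:inner_product} but worth making explicit so that the scalar factor $(A_0)_{\mu,\nu}$ is cleanly separated from the polynomial factor $\phi_i(\xi)\conj{\phi_j(\xi)}$ under the integral.
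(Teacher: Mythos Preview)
Your proposal is correct and is exactly the argument the paper has in mind: the corollary is stated without proof in the paper, being an immediate consequence of the definition of $A_{ij}$ in Lemma~\ref{lem:galerkin_projection} together with the orthonormality $\sprod{\phi_i,\phi_j}=\delta_{ij}$, just as you write it out.
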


Finally, we obtain the following result on the structure of the matrix $A$ 
in~\eqref{eqn:LGS_from_FD_G}.

\begin{theorem} \label{thm:discretization_helmholtz}
Let the spatial dimension be $d \in \{ 1, 2 \}$.
A finite difference and stochastic Galerkin approximation of the Helmholtz 
equation~\eqref{eqn:stochastic_helmholtz} with either homogeneous Dirichlet or 
absorbing boundary conditions leads to a linear 
system~\eqref{eqn:LGS_from_FD_G} with coefficient matrix
\begin{equation} \label{eqn:A}
A = L - \ii B - K
\end{equation}
and real-valued matrices $L, B, K$.  
The matrix $K$ is symmetric positive definite, 
$B, L$ are symmetric positive semidefinite.
In case of homogeneous Dirichlet boundary conditions, 
$L$ is symmetric positive 
definite and $B = 0$.
\end{theorem}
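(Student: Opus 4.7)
The plan is to decompose the pre-Galerkin matrix $S(\xi)$ from~\eqref{eqn:LGS_FD} into three real pieces that separately correspond to the three terms in the PDE plus boundary condition, verify the sign-definiteness of each piece pointwise in $\xi$, and then invoke Lemma~\ref{lem:galerkin_projection} and Corollary~\ref{cor:constant_galerkin_projection} to transport the sign-definiteness through the stochastic Galerkin projection. By linearity of the Galerkin projection, this immediately yields the splitting $A = L - \ii B - K$ with the required structural properties.

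First I would spell out the FDM discretization (delegating the explicit formulas to the appendix). Writing the scheme for $-\Delta u - k^2 u = f$ and for whichever boundary condition is active, and applying a symmetrizing rescaling at the boundary (i.e.\ multiplying the boundary rows by $h/2$ in 1D and $h/2$ or $h^2/4$ in 2D so that the discrete $-\Delta$ becomes symmetric), one gets
\begin{equation*}
S(\xi) = L_s - \ii B_s(\xi) - K_s(\xi),
\end{equation*}
where $L_s$ is the symmetrized discrete negative Laplacian and is independent of~$\xi$; $K_s(\xi)$ is a diagonal matrix whose entries are (positive multiples of) $k(x_\ell,\xi)^2$; and $B_s(\xi)$ is a diagonal matrix whose entries are (positive multiples of) $k(x_\ell,\xi)$ at boundary nodes and zero at interior nodes. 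Under homogeneous Dirichlet conditions the boundary nodes are eliminated, so $B_s \equiv 0$ and $L_s$ becomes a standard discrete Dirichlet Laplacian.

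Next I would verify the pointwise definiteness of the three constituents. Since $k$ takes values in $\R_+$ by assumption, $k(x_\ell,\xi)^2 > 0$ almost surely and $K_s(\xi)$ is symmetric positive definite for a.e.\ $\xi \in \Xi$; similarly $B_s(\xi)$ is diagonal with nonnegative entries, hence symmetric positive semidefinite. For $L_s$ in the absorbing case, the standard energy argument (summation by parts on the grid) shows it is symmetric positive semidefinite; in the Dirichlet case, the elimination of boundary rows together with the same discrete integration by parts yields strict positive definiteness. All three matrices are real-valued.

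Finally, I would apply Corollary~\ref{cor:constant_galerkin_projection} to the $\xi$-independent matrix $L_s$ to obtain $L = I_{m+1} \otimes L_s$, which inherits positive (semi-)definiteness, and apply Lemma~\ref{lem:galerkin_projection} separately to $B_s(\xi)$ and $K_s(\xi)$ to obtain real symmetric positive (semi-)definite Galerkin projections $B$ and $K$. By linearity of the projection in the matrix argument, the projection of $S(\xi) = L_s - \ii B_s(\xi) - K_s(\xi)$ is exactly $A = L - \ii B - K$, giving~\eqref{eqn:A} with all stated properties. The main obstacle is bookkeeping rather than conceptual: it lies in performing the symmetrization of the boundary rows (and, in 2D, at corners) carefully enough that $L_s$ is symmetric and the boundary contribution indeed separates cleanly into a real $K_s$ piece and an imaginary $B_s$ piece, so that Lemma~\ref{lem:galerkin_projection} can be applied as stated.
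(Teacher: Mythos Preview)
Your proposal is correct and follows essentially the same approach as the paper: decompose $S(\xi)$ into a $\xi$-independent Laplacian part, an imaginary boundary part, and a real wavenumber part, verify the (semi-)definiteness of each piece pointwise in~$\xi$, and then push these properties through the Galerkin projection via Lemma~\ref{lem:galerkin_projection} and Corollary~\ref{cor:constant_galerkin_projection}. The paper relegates the explicit FD formulas and the definiteness verifications to the appendix (using, e.g., a Sturm-sequence argument for the semidefiniteness of the 1D absorbing Laplacian rather than your summation-by-parts), but the logical structure is identical.
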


\begin{proof}
The FD discretizations leading to $S(\xi) U(\xi) = F_0$ in~\eqref{eqn:LGS_FD} 
are given in Section~\ref{sect:discretization}.  The statement of the theorem 
follows in each case by applying the stochastic Galerkin approximation as 
described above and using Lemma~\ref{lem:galerkin_projection} as well as 
Corollary~\ref{cor:constant_galerkin_projection} separately for each term 
composing $S(\xi)$.
\end{proof}

The matrix $L$ results essentially from the discretization of the Laplacian, 
$B$ from the (absorbing) boundary conditions, 
and $K$ is the discretization of the term including the wavenumber; see 
Section~\ref{sect:discretization} for details.

\subsection{Stochastic Galerkin method and FDM}
\label{sect:Galerkin_FDM}

Alternatively, we can begin with the stochastic Galerkin method.  
This leads to a system of deterministic PDEs, 
which are subsequently discretized by a FDM.
The PC expansion~\eqref{eqn:pc} suggests a stochastic Galerkin approximation 
of $u(x, \xi)$ of the form
\begin{equation} \label{eqn:pc_galerkin}
\widetilde{u}_m(x, \xi) = \sum_{i=0}^m v_{i, m}(x) \phi_i(\xi).
\end{equation}
The coefficient functions $v_{i,m}$ in the stochastic Galerkin method 
are in general distinct from the coefficients $v_i$ in~\eqref{eqn:coeff}.  
Nevertheless, we will usually write $v_i$ instead of $v_{i, m}$ 
in the sequel for notational convenience.  
The coefficients in the Galerkin approach are determined by the 
orthogonality of the residual
\begin{align*}
R_m(x,\xi)
&= - \Delta \widetilde{u}_m(x, \xi) - k(x,\xi)^2 \widetilde{u}_m(x, \xi) 
- f(x) \\
&= - \sum_{i=0}^m \Delta v_i(x) \phi_i(\xi) 
- k(x,\xi)^2 \sum_{i=0}^m v_i(x) \phi_i(\xi) - f(x)
\end{align*}
to the subspace $\spann \{ \phi_0, \phi_1, \ldots, \phi_m \}$, i.e., by
$\sprod{ R_m(x,\xi) , \phi_j(\xi) } = 0$ for $j = 0,1,\ldots , m$ and each $x 
\in Q$.  The latter is equivalent to
\begin{equation} \label{eqn:pde_system_vj}
- \Delta v_j(x) - \sum_{i=0}^m \sprod{k(x,\xi)^2 \phi_i(\xi), \phi_j(\xi)} 
v_i(x) = \sprod{1, \phi_j(\xi)} f(x) = \delta_{j,0} f(x)
\end{equation}
for $j=0,1,\ldots,m$ in~$Q$.
Thus we obtain a system of PDEs for the unknown coefficient functions 
$v_0,v_1,\ldots,v_m$.
Define $C(x) = [c_{ij}(x)] \in \R^{m+1, m+1}$ for $x \in Q$ by
\begin{equation} \label{eqn:c_ij}
c_{ij}(x) = \sprod{k(x, \xi)^2 \phi_i(\xi), \phi_j(\xi)}
= \int_\Xi \phi_i(\xi) \phi_j(\xi) k(x, \xi)^2 \rho(\xi) \, \rmd \xi, 
\quad i, j = 0, 1, \ldots, m.
\end{equation}
Since by assumption $k(x,\xi) > 0$ for all $x$ and $\xi$, the matrix $C(x)$ is 
symmetric positive definite (as Gramian of an inner product with weight 
function $k(x, \xi)^2 \rho(\xi)$).
Setting
\begin{equation} \label{eqn:F}
v(x) = \begin{bmatrix} v_0(x) & v_1(x) & \cdots & v_m(x) \end{bmatrix}^\top, 
\quad
F(x) = \begin{bmatrix} f(x) & 0 & \cdots & 0 \end{bmatrix}^\top,
\end{equation}
we write the system of PDEs~\eqref{eqn:pde_system_vj} as 
\begin{equation} \label{eqn:de_v}
- \Delta v(x) - C(x) v(x) = F(x) \quad \text{in } Q,
\end{equation}
which is a larger deterministic system of linear PDEs.
Still we require boundary conditions for the system~\eqref{eqn:de_v}.

The homogeneous Dirichlet boundary condition~\eqref{eqn:stochastic_bc_dir} 
implies $v_j(x) = 0$ for $x \in \partial Q$ and $j = 0, 1, \ldots, m$, hence
\begin{equation}
v(x) = 0 \quad \text{on } \partial Q. \label{eqn:bc_v_dirichlet}
\end{equation}
Inserting the Galerkin approximation~\eqref{eqn:pc_galerkin} into the absorbing 
boundary conditions~\eqref{eqn:stochastic_bc_abs} yields the residual
\begin{align}
R_m(x,\xi)
&= \sum_{i=0}^m (\partial_n v_i)(x) \phi_i(\xi) - \ii k(x, \xi) \sum_{i=0}^m 
v_i(x) \phi_i(\xi).
\end{align}
By the orthogonality $\sprod{R_m(x, \xi), \phi_j(\xi)} = 0$ 
in the Galerkin approach, we obtain
\begin{equation} \label{eqn:bc_system_vj}
\partial_n v_j(x) - \ii \sum_{i=0}^m \sprod{k(x, \xi) \phi_i(\xi), \phi_j(\xi)} 
v_i(x) = 0, \quad j = 0, 1, \ldots, m.
\end{equation}
The matrix $B(x) = [b_{ij}(x)] \in \R^{m+1, m+1}$ with
\begin{equation} \label{eqn:b_ij}
b_{ij}(x) = \sprod{k(x, \xi) \phi_i(\xi), \phi_j(\xi)}
= \int_\Xi \phi_i(\xi) \phi_j(\xi) k(x, \xi) \rho(\xi) \, \rmd \xi,
\quad i, j = 0, 1, \ldots, m,
\end{equation}
is symmetric and positive definite (since $k(x, \xi) > 0$ by assumption).
The boundary condition~\eqref{eqn:bc_system_vj} can be written with $B(x)$ as
\begin{equation} \label{eqn:bc_v}
(\partial_n v)(x) - \ii B(x) v(x) = 0 \quad \text{on } \partial Q. 
\end{equation}
The boundary value problem~\eqref{eqn:de_v} with~\eqref{eqn:bc_v_dirichlet} 
or~\eqref{eqn:bc_v} is discretized in Section~\ref{sect:Galerkin_FDM_1d} (in 
dimension $d = 1$).  The resulting linear algebraic system is the same as the 
one obtained in Section~\ref{sect:FDM_Galerkin}.


\section{Complex shifted Laplace preconditioner}
\label{sect:csl}

Following the investigation in~\cite{GarciaSeteNabben2021}, we consider the 
Helmholtz equation~\eqref{eqn:stochastic_helmholtz} with a \emph{complex shift} 
in the wavenumber
\begin{equation} \label{eqn:shifted_helmholtz}
- \Delta u(x, \xi) - (1 + \ii \beta) k(x, \xi)^2 u(x, \xi) = f(x), \quad x \in 
Q,
\end{equation}
with $\beta \in \R$, together with either homogeneous Dirichlet boundary 
conditions~\eqref{eqn:stochastic_bc_dir} or absorbing boundary 
conditions~\eqref{eqn:stochastic_bc_abs}.
We discretize this boundary value problem as described in 
Section~\ref{sect:FDM_Galerkin}.  For $\beta = 0$, we have the 
matrix~\eqref{eqn:A} in Theorem~\ref{thm:discretization_helmholtz}, and for 
$\beta \in \R$ we obtain
\begin{equation} \label{eqn:M}
M \coloneq M(\beta) \coloneq L - \ii B - (1 + \ii \beta) K = A - \ii \beta K,
\end{equation}
since only the constant term is multiplied by $1 + \ii \beta$.
Motivated by~\cite[p.~1945]{vanGijzenErlanngaVuik2007}, we call $M$ a 
\emph{complex shifted Laplace preconditioner} (CSL preconditioner).

For the deterministic Helmholtz equation, preconditioning with the CSL 
preconditioner is a widely studied and successful technique for solving the 
discretized Helmholtz equation; see, e.g.,~\cite{ErlanggaVuikOosterlee2004, 
AiraksinenHeikkolaPennanenToivanen2007, RepsVanrooseZubair2010, 
CoolsVanroose2013,GanderGrahamSpence2015} and~\cite{GarciaSeteNabben2021}, as 
well as references therein.
See also~\cite{Erlangga2007} for a survey and~\cite{Lahaye2017} for recent 
developments.
In the deterministic case, 
the spectrum of the preconditioned matrix $A M^{-1}$ lies in the 
disk~\eqref{eqn:inclusion_disk}, and the improved 
localization of the spectrum typically leads to a faster convergence of Krylov 
solvers.  The CSL preconditioner $M$ can be inverted efficiently, 
for example, by multigrid techniques.

Here, we focus on locating the spectrum of the preconditioned matrix in the 
stochastic case, in analogy to~\cite{vanGijzenErlanngaVuik2007, 
GarciaNabben2018, GarciaSeteNabben2021} for the deterministic Helmholtz 
equation.

\begin{theorem} \label{thm:spectrum}
Let the notation be as in Theorem~\ref{thm:discretization_helmholtz}, let
$\beta > 0$, let $A$ be the discretization~\eqref{eqn:A} of the stochastic 
Helmholtz equation~\eqref{eqn:stochastic_helmholtz} and $M$ be the 
discretization~\eqref{eqn:M} of the shifted 
Helmholtz equation~\eqref{eqn:shifted_helmholtz}.
\begin{enumerate}
\item In the case of absorbing boundary 
conditions~\eqref{eqn:stochastic_bc_abs}, the spectrum of the preconditioned 
matrix $A M^{-1}$ is contained in the closed disk
\begin{equation} \label{eqn:inclusion_disk}
\mathcal{D} = \{ z \in \C : \abs{z - 1/2} \leq 1/2 \}.
\end{equation}
\item In the case of homogeneous Dirichlet boundary 
conditions~\eqref{eqn:stochastic_bc_dir}, the spectrum of the preconditioned 
matrix $A M^{-1}$ lies on the circle
\begin{equation} \label{eqn:circle}
\cC = \{ z \in \C : \abs{z - 1/2} = 1/2 \}.
\end{equation}
\end{enumerate}
\end{theorem}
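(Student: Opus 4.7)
The plan is to reduce the eigenvalue problem for $A M^{-1}$ to the generalized eigenvalue problem $Ax = \lambda M x$ and, using the identity $A = M + \ii \beta K$ that follows directly from \eqref{eqn:M}, parameterize each eigenvalue via $\lambda = 1 + \ii\beta \mu$, where $\mu$ satisfies $Kx = \mu M x$ for some $x \neq 0$. The positivity properties in Theorem \ref{thm:discretization_helmholtz}, combined with $\beta > 0$, then reduce the required spectral inclusion to an elementary inequality on quadratic forms.

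First I would verify that $M$ is invertible. Evaluating the sesquilinear form,
\begin{equation*}
\sprod{Mx,x} = \sprod{Lx,x} - \ii \sprod{Bx,x} - (1+\ii\beta)\sprod{Kx,x}
\end{equation*}
has imaginary part $-\sprod{Bx,x} - \beta\sprod{Kx,x}$, which is strictly negative for $x \neq 0$ since $\sprod{Bx,x} \geq 0$, $\sprod{Kx,x} > 0$, and $\beta > 0$. Thus $Mx \neq 0$, and $\lambda \in \sigma(AM^{-1})$ is equivalent to the existence of $x \neq 0$ with $Ax = \lambda M x$, i.e., $Kx = \mu M x$ where $\lambda = 1 + \ii\beta\mu$. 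Abbreviating $k := \sprod{Kx,x} > 0$, $\ell := \sprod{Lx,x} \geq 0$, and $b := \sprod{Bx,x} \geq 0$, taking the inner product of $Kx = \mu Mx$ with $x$ yields
\begin{equation*}
\mu = \frac{k}{(\ell - k) - \ii(b + \beta k)}.
\end{equation*}

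The conclusion rests on the geometric characterization $\abs{z - 1/2} \leq 1/2 \iff \abs{z}^2 \leq \re(z)$, with equality precisely on the circle $\cC$. Substituting $\lambda = 1 + \ii\beta\mu$ gives $\re(\lambda) - \abs{\lambda}^2 = \beta \im(\mu) - \beta^2 \abs{\mu}^2$, and from the explicit formula for $\mu$ a short direct computation produces
\begin{equation*}
\im(\mu) - \beta\,\abs{\mu}^2 = \frac{k\,b}{(\ell-k)^2 + (b + \beta k)^2}.
\end{equation*}
This quantity is non-negative in all cases and vanishes exactly when $b = 0$. Part~(i) follows immediately. For part~(ii), Theorem \ref{thm:discretization_helmholtz} ensures $B = 0$, hence $b = 0$ for every eigenvector $x$, so equality $\abs{\lambda}^2 = \re(\lambda)$ holds throughout and the spectrum lies on $\cC$. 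There is no serious technical obstacle; the only substantive step is recognizing the substitution $\lambda = 1 + \ii\beta\mu$ afforded by $A - M = \ii\beta K$, after which the positivity of $k$, $b$, and $\beta$ does all the remaining work.
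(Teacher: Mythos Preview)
Your proof is correct, and it takes a genuinely different route from the paper's.

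The paper parameterizes via the auxiliary generalized eigenvalue problem $(L-\ii B)x=\lambda Kx$, obtained by writing $A=(L-\ii B)-z_1K$ and $M=(L-\ii B)-z_2K$ with $z_1=1$, $z_2=1+\ii\beta$; the eigenvalues $\sigma$ of $AM^{-1}$ are then images of $\lambda$ under the M\"obius map $\mu(z)=(z-z_1)/(z-z_2)$. A Cholesky factorization of $K$ shows $\re(\lambda)\ge 0$, $\im(\lambda)\le 0$, and one checks that $\mu$ maps the closed lower half-plane onto the disk $\mathcal{D}$ (and the real line onto the circle $\cC$). Your approach instead exploits $A-M=\ii\beta K$ to write $\lambda=1+\ii\beta\mu$ with $Kx=\mu Mx$, extracts $\mu$ directly from the Rayleigh quotient $\mu=\sprod{Kx,x}/\sprod{Mx,x}$, and verifies the disk condition $\re(\lambda)\ge|\lambda|^2$ by an explicit algebraic identity.

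Your argument is more elementary: it avoids the Cholesky step and the M\"obius geometry entirely, and it makes the role of $B$ in separating the two cases completely transparent (the whole discrepancy $\re(\lambda)-|\lambda|^2$ is a nonnegative multiple of $b=\sprod{Bx,x}$). The paper's approach, on the other hand, yields additional geometric information essentially for free: because it also tracks $\re(\lambda)\ge 0$, it immediately gives the sharper inclusion set in Remark~\ref{rem:inclusion_region} (the disk minus a second disk), which your formulation does not recover without further work.
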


\begin{proof}
We begin with the case of absorbing boundary conditions.
The proof closely follows~\cite[Sect.~3]{vanGijzenErlanngaVuik2007} with minor 
modifications.  We have
\begin{equation}
A = L - \ii B - z_1 K, \quad M = L - \ii B - z_2 K
\end{equation}
with $z_1 = 1$ and $z_2 = 1 + \ii \beta$ and where $L, B, K$ are symmetric, $K$ 
is positive definite and $L, B$ are positive semidefinite; see 
Theorem~\ref{thm:discretization_helmholtz}.
Then $A$ and $M$ are of the form in~\cite[Sect.~3]{vanGijzenErlanngaVuik2007}, 
except for the opposite sign of $B$.  The opposite sign affects the positive 
semidefiniteness, but not the overall strategy of the proof.  Nevertheless, we 
give a full proof here.

Step 1:
Observe first that $A M^{-1}$ and $M^{-1} A$ have the same spectrum, and that 
$M^{-1} A x = \sigma x$ is equivalent to the generalized eigenproblem $A x = 
\sigma M x$.

Step 2: $x$ is an eigenvector of $A x = \sigma M x$ if and only if $(L 
- \ii B) x = \lambda K x$, which can be seen as follows:
\begin{equation}
(L - \ii B - z_1 K) x = \sigma (L - \ii B - z_2 K) x
\Leftrightarrow (1 - \sigma) (L - \ii B) x = (z_1 - \sigma z_2) K x.
\end{equation}
For $\sigma \neq 1$, we obtain $(L - \ii B) x = \lambda K x$ with $\lambda = 
(z_1 - \sigma z_2)/(1 - \sigma)$.  (Note that $\sigma = 1$ is equivalent to 
$z_1 = z_2$, i.e., to $A = M$, which is excluded since $\beta > 0$.)
Conversely, if $(L - \ii B) x = \lambda K x$, then
$(L - \ii B - z_1 K) x = (\lambda - z_1) K x = \frac{\lambda - 
z_1}{\lambda - z_2} (L - \ii B - z_2 K) x$ and $\sigma = \frac{\lambda - 
z_1}{\lambda - z_2}$, provided that $\lambda \neq z_2$.
(Note that $(L - \ii B) x = z_2 K x$, i.e., $\lambda = z_2$, implies that $M$ 
is singular and thus not eligible as preconditioner.)

Step 3: Location of $\lambda$ in the generalized eigenvalue problem $(L - \ii 
B) x = \lambda K x$.
Since $K$ is symmetric positive definite, it has a Cholesky factorization $K = 
U U^\top$ and the generalized eigenvalue problem is equivalent to
\begin{equation} \label{eqn:transformed_eigenproblem}
U^{-1} (L - \ii B) U^{-\top} y = \lambda y,
\end{equation}
where $y = U^\top x$.
Multiplication of~\eqref{eqn:transformed_eigenproblem} by $y^\top$ and division 
by $y^\top y$ yields
\begin{equation}
\lambda = \frac{y^\top U^{-1} L U^{-\top} y}{y^\top y} - \ii \frac{y^\top 
U^{-1} B U^{-\top} y}{y^\top y}.
\end{equation}
This shows $\re(\lambda) \geq 0$ and $\im(\lambda) \leq 0$ since $L$ and $B$ 
are symmetric positive semidefinite.

Step 4: Estimate of the eigenvalues $\sigma$ of $M^{-1} A$. 
Since it holds that $z_1 \neq z_2$,
\begin{equation} \label{eqn:moebius}
\mu(z) = \frac{z - z_1}{z - z_2}
\end{equation}
is a M\"obius transformation.  By step~2, $\sigma = \mu(\lambda)$
where $\lambda$ is an eigenvalue of the generalized eigenvalue problem $(L - 
\ii B) x = \lambda K x$ which satisfies $\im(\lambda) \leq 0$.
To determine $\mu(\R)$, we compute
\begin{equation}
\mu(0) = \frac{z_1}{z_2} = \frac{1}{1 + \ii \beta} = \frac{1 - \ii \beta}{1 + 
\beta^2}, \quad \mu(z_1) = 0, \quad \mu(\infty) = 1
\end{equation}
and
\begin{equation}
\abs*{\mu(0) - \frac{1}{2}}^2
= \abs*{\frac{1}{1 + \beta^2} - \frac{1}{2} - \ii \frac{\beta}{1 + \beta^2}}
= \frac{(1 - \beta^2)^2}{4 (1 + \beta^2)^2} + \frac{\beta^2}{(1 + \beta^2)^2}
= \frac{1}{4}.
\end{equation}
Hence $\mu$ maps the real line onto the circle $\cC$ in \eqref{eqn:circle} 
(for any $\beta \neq 0$).  For $\beta > 0$, the lower half-plane 
is mapped by $\mu$ onto the interior of $\cC$ (for $\beta < 0$ onto the 
exterior); see Figure~\ref{fig:inclusion_region}.
This completes the proof in case of absorbing boundary conditions.

The proof in the case of Dirichlet boundary conditions is very similar.
The only difference is in the location of the eigenvalues $\lambda$ in step 3.
Since $L$ is symmetric positive definite and $B = 0$, 
\eqref{eqn:transformed_eigenproblem} implies $\lambda > 0$,
hence $\sigma = \mu(\lambda)$ lies on the circle~\eqref{eqn:circle}.
\end{proof}

\begin{figure}
{\centering
\includegraphics[width=0.5\linewidth]{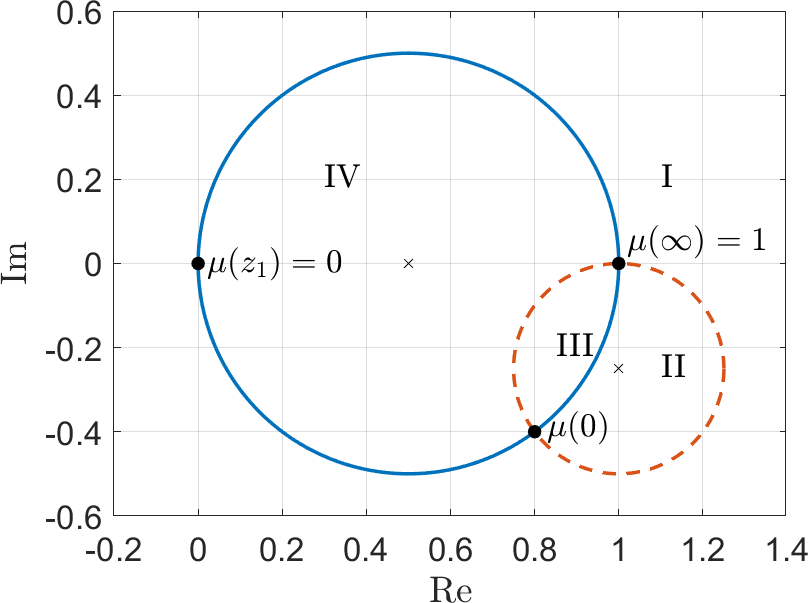}

}
\caption{Images under $\mu$ of the real and imaginary axis (solid and dashed 
circles, respectively) and of the four quadrants; see 
Remark~\ref{rem:inclusion_region} and the proof of Theorem~\ref{thm:spectrum}.}
\label{fig:inclusion_region}
\end{figure}

\begin{remark} \label{rem:inclusion_region}
In the proof of Theorem~\ref{thm:spectrum}, 
we additionally have $\re(\lambda) \geq 0$. 
Hence $\sigma$ is located in the image of the (closed) fourth quadrant 
under $\mu$ in~\eqref{eqn:moebius}.
To determine this image, 
note that $\mu$ maps the imaginary axis onto the circle
\begin{equation}
\cC_\beta = \{ z \in \C : \abs{z - (1 - \ii (\beta/2))} = 
\abs{\beta}/2 \},
\end{equation}
which intersects $\mu(\R) = \cC$ 
orthogonally in $\mu(0)$ and $\mu(\infty) = 1$.  Considering the orientations 
shows that $\mu$ maps the right half-plane onto the exterior of $\cC_\beta$; 
see Figure~\ref{fig:inclusion_region}.
Thus the spectrum satisfies
\begin{equation}
\sigma(A M^{-1}) \subseteq \{ z \in \C : \abs{z - 1/2} \leq 1/2 \}
\setminus \{ z \in \C : \abs{z - (1 - \ii (\beta/2))} < \abs{\beta}/2 \}.
\end{equation}
In case of Dirichlet boundary conditions, the eigenvalues of $A M^{-1}$ lie on 
the arc of the circle $\cC$ from $\mu(0)$ to $\mu(\infty) = 1$ that contains 
the origin.

This observation further tightens the inclusion set of $\sigma(A M^{-1})$, 
also in the case of a deterministic wavenumber.  This tighter inclusion set is 
already visible in~\cite[Fig.~1, Fig.~2]{GarciaNabben2018} 
and~\cite[Fig.~2.1]{GarciaSeteNabben2021} but we are not aware of a proof in 
the literature.
\end{remark}


\section{Mean value preconditioner}
\label{sect:mean-value}

We consider the discretization from Section~\ref{sect:FDM_Galerkin}.
Let $S(\xi) \in \K^{n,n}$ be the coefficient matrix of 
a linear system resulting 
from a spatial discretization of the Helmholtz equation~\eqref{eqn:de_orig} 
including boundary conditions and wavenumber $k(x,\xi)$. 
We assume that $S(\xi)$ is non-singular for almost all realizations 
$\xi \in \Xi$.
Let $\bar{\xi} \in \Xi$ be the expected value of the 
multidimensional random variable~$\xi$. 
It holds that
\begin{equation*}
S(\xi) = S(\bar{\xi}) + (S(\xi) - S(\bar{\xi})) \eqcolon
S(\bar{\xi}) + \Delta S (\xi).
\end{equation*}
The stochastic Galerkin method applied to $S(\xi)$ yields 
a matrix $A \in \mathbb{K}^{(m+1)n,(m+1)n}$ as shown 
in Section~\ref{sect:FDM_Galerkin}.
Furthermore, we define the constant matrix
\begin{equation} \label{eqn:bar_A}
\bar{A} = I_{m+1} \otimes S(\bar{\xi}).
\end{equation}
This matrix allows for the construction
\begin{equation} \label{eqn:Delta_A}
A = \bar{A} + (A-\bar{A}) \eqcolon \bar{A} + \Delta A.
\end{equation}
We employ the Frobenius matrix norm $\| \cdot \|_{\rm F}$ in the following.

\begin{theorem} \label{thm:matrix_bound}
Using the Frobenius norm, it holds that
\begin{equation} \label{eqn:matrix_bound1} 
\| \bar{A}^{-1} A - I_{(m+1)n} \|_{\rm F} \le 
C_m \, \| S(\bar{\xi})^{-1} \|_{\rm F} \,
\big\| \| \Delta S(\xi) \|_{\rm F} \big\|_{\ltwospace} 
\end{equation}
with the constants
\begin{equation*}
C_m = \sqrt{m+1} \,
\bigg( \sum_{i,j=0}^m \| \phi_i(\xi) \phi_j(\xi) \|_{\ltwospace}^2 
\bigg)^{\frac12}
\end{equation*}
provided that the $\mathcal{L}^2$-norm of the matrix norm is finite. 
\end{theorem}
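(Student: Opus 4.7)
The plan is to start from the identity
\[
\bar{A}^{-1} A - I_{(m+1)n} = \bar{A}^{-1} (A - \bar{A}) = \bar{A}^{-1} \Delta A,
\]
using the decomposition~\eqref{eqn:Delta_A}, and then apply submultiplicativity of the Frobenius norm to obtain $\| \bar{A}^{-1} A - I_{(m+1)n} \|_{\rm F} \le \| \bar{A}^{-1} \|_{\rm F} \, \| \Delta A \|_{\rm F}$. The two factors on the right will be estimated separately.

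For the first factor, I would exploit the Kronecker structure of $\bar{A}$ in~\eqref{eqn:bar_A}. Since $\bar{A}^{-1} = I_{m+1} \otimes S(\bar{\xi})^{-1}$ is block-diagonal with $m+1$ copies of $S(\bar{\xi})^{-1}$ on the diagonal, summing the squared entries blockwise gives $\| \bar{A}^{-1} \|_{\rm F} = \sqrt{m+1} \, \| S(\bar{\xi})^{-1} \|_{\rm F}$. This is where the factor $\sqrt{m+1}$ in $C_m$ comes from.

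For the second factor, I would write $\Delta A$ blockwise. Using Lemma~\ref{lem:galerkin_projection} applied both to $S(\xi)$ and, via Corollary~\ref{cor:constant_galerkin_projection}, to the constant matrix $S(\bar{\xi})$, the $(i,j)$-block is
\[
\Delta A_{ij} = \bigl[\, \sprod{\Delta s_{\mu\nu}(\xi) \phi_i(\xi), \phi_j(\xi)} \,\bigr]_{\mu,\nu},
\]
where $\Delta s_{\mu\nu}$ are the entries of $\Delta S(\xi)$. Each entry is $\int_\Xi \Delta s_{\mu\nu}(\xi)\, \phi_i(\xi) \phi_j(\xi)\, \rho(\xi)\, \rmd\xi$, and the Cauchy--Schwarz inequality in $\ltwospace$ gives
\[
\abs{\sprod{\Delta s_{\mu\nu}(\xi) \phi_i(\xi), \phi_j(\xi)}}^2
\le \| \Delta s_{\mu\nu} \|_{\ltwospace}^2 \, \| \phi_i \phi_j \|_{\ltwospace}^2.
\]
Summing over $\mu, \nu$ and swapping the integral and the finite sum yields $\| \Delta A_{ij} \|_{\rm F}^2 \le \| \phi_i \phi_j \|_{\ltwospace}^2 \, \bigl\| \| \Delta S(\xi) \|_{\rm F} \bigr\|_{\ltwospace}^2$. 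Summing over $i,j = 0, 1, \ldots, m$ gives
\[
\| \Delta A \|_{\rm F}^2
\le \bigl\| \| \Delta S(\xi) \|_{\rm F} \bigr\|_{\ltwospace}^2 \sum_{i,j=0}^m \| \phi_i \phi_j \|_{\ltwospace}^2.
\]

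Combining the two bounds produces the stated estimate with the constant $C_m$. The only technical step that needs care is the interchange between the Frobenius sum and the $\ltwospace$-norm; this is legitimate because both are finite sums/integrals of nonnegative functions, which is where the hypothesis that $\bigl\| \| \Delta S(\xi) \|_{\rm F} \bigr\|_{\ltwospace}$ is finite enters. I do not expect a serious obstacle here, as the argument essentially reduces to Cauchy--Schwarz applied entrywise and bookkeeping of the Kronecker structure.
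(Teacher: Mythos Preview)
Your proposal is correct and follows essentially the same approach as the paper: the identity $\bar{A}^{-1}A - I = \bar{A}^{-1}\Delta A$, submultiplicativity of the Frobenius norm, the Kronecker identity $\|\bar{A}^{-1}\|_{\rm F}^2 = (m+1)\|S(\bar\xi)^{-1}\|_{\rm F}^2$, and the entrywise Cauchy--Schwarz estimate on $\Delta A$ with the interchange of the finite sum and the integral. No differences worth noting.
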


\begin{proof}
The definition~\eqref{eqn:Delta_A} directly yields
\begin{equation*}
\bar{A}^{-1} A - I_{(m+1)n} = 
I_{(m+1)n} + \bar{A}^{-1} \Delta A - I_{(m+1)n} = 
\bar{A}^{-1} \Delta A.
\end{equation*}
We obtain 
$\| \bar{A}^{-1} \Delta A \|_{\rm F} \le 
\| \bar{A}^{-1} \|_{\rm F} \, \| \Delta A \|_{\rm F}$.  
The properties of the Kronecker product and~\eqref{eqn:bar_A} imply
$\| \bar{A}^{-1} \|_{\rm F}^2 = (m+1) \| S(\bar{\xi})^{-1} \|_{\rm F}^2$.  
We estimate $\norm{\Delta A}_{\rm F}$ using the Cauchy-Schwarz inequality with 
respect to the 
inner product~\eqref{eqn:inner_product}
\begin{align*} 
\| \Delta A \|_{\rm F}^2 & = 
\sum_{i,j=0}^{m} \sum_{\mu,\nu=1}^n 
\left| \sprod{ \phi_i(\xi) \phi_j(\xi) , \Delta S_{\mu,\nu}(\xi) } \right|^2 \\
& \le
\sum_{i,j=0}^{m} \sum_{\mu,\nu=1}^n 
\| \phi_i(\xi) \phi_j(\xi) \|_{\ltwospace}^2 \,
\| \Delta S_{\mu,\nu}(\xi) \|_{\ltwospace}^2 \\
& = 
\bigg( \sum_{i,j=0}^{m}  
\| \phi_i(\xi) \phi_j(\xi) \|_{\mathcal{L}^2(\Xi,\rho)}^2 \bigg) \,
\big\| \| \Delta S(\xi) \|_{\rm F} \big\|_{\ltwospace}^2 .
\end{align*}
In the last step, we used that the square of an $\mathcal{L}^2$-norm 
is an integral and thus summation (with respect to $\mu,\nu$) 
and integration can be interchanged. 
Applying the square root to the above estimate yields the 
statement~\eqref{eqn:matrix_bound1}. 
\end{proof}

\begin{remark} \label{rem:pessimistic_bound}
Rough estimates are used in the proof of Theorem~\ref{thm:matrix_bound}. 
Thus the true matrix norms of $\bar{A}^{-1} A - I_{(m+1)n}$ are often 
much smaller than the upper bounds in~\eqref{eqn:matrix_bound1}.
\end{remark}

\begin{remark}
If the random variable $\Delta S(\xi)$ is essentially bounded, 
then it follows that
\begin{equation*}
\big\| \| \Delta S(\xi) \|_{\rm F} \big\|_{\ltwospace} \le
\sup_{\xi \in \Xi \backslash \Upsilon} \| \Delta S(\xi) \|_{\rm F} < \infty
\end{equation*}
with a set $\Upsilon \subseteq \Xi$ of measure zero 
due to the normalization $\| 1 \|_{\ltwospace} = 1$.
\end{remark}

\begin{remark}
The bound of Theorem~\ref{thm:matrix_bound} also holds true for the 
Frobenius norm of $A \bar{A}^{-1} - I_{(m+1)n}$.
\end{remark}

Theorem~\ref{thm:matrix_bound} together with 
Remark~\ref{rem:pessimistic_bound} demonstrate that the matrix~$\bar{A}$ 
is a good preconditioner for solving linear systems with 
coefficient matrix~$A$. 
In this context, $\bar{A}$ is called the \emph{mean value preconditioner}, 
as in~\cite{Wang-etal} for the multi-element method.
When $\bar{A}$ is used as a preconditioner (left-hand or right-hand), 
linear systems with coefficient matrix $\bar{A}$ have to be solved. 
The matrix $\bar{A}$ from~\eqref{eqn:bar_A} is block-diagonal with 
$m+1$ identical blocks in this application. 
Thus just a single $LU$-decomposition of the matrix $S(\bar{\xi})$ 
is required. 
Many linear systems with different right-hand sides are solved using 
this $LU$-decomposition in an iterative method like GMRES, for example.

\begin{theorem} \label{thm:matrix_asymptotic}
Let $S(\xi) = S_0 + \theta T(\xi)$ with a non-singular constant matrix~$S_0$, 
a matrix $T= \begin{bmatrix} t_{\mu,\nu} \end{bmatrix}_{\mu,\nu}$ depending on 
a random variable~$\xi$ 
with components $t_{\mu,\nu}\in\ltwospace$
and a real parameter $\theta > 0$. 
Using $A_0 = I_{m+1} \otimes S_0$, 
the Frobenius norm exhibits the asymptotic behavior
\begin{equation} \label{eqn:matrix_behaviour} 
\| A_0^{-1} A - I_{(m+1)n} \|_{\rm F} = O(\theta) . 
\end{equation}
\end{theorem}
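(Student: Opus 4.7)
The plan is to mimic the argument of Theorem~\ref{thm:matrix_bound}, exploiting the very simple form of the perturbation. Since $S_0$ is constant in $\xi$, Corollary~\ref{cor:constant_galerkin_projection} says that the stochastic Galerkin projection of $S_0$ is exactly $A_0 = I_{m+1} \otimes S_0$. Applying Lemma~\ref{lem:galerkin_projection} block by block to $S(\xi) = S_0 + \theta T(\xi)$, I get
\begin{equation*}
A = A_0 + \theta \widetilde{T}, \qquad
\widetilde{T} = \bigl[ \widetilde{T}_{ij} \bigr]_{i,j=0}^m, \quad
\widetilde{T}_{ij} = \bigl[ \sprod{t_{\mu,\nu}(\xi)\phi_i(\xi),\phi_j(\xi)} \bigr]_{\mu,\nu}.
\end{equation*}
Hence $A_0^{-1} A - I_{(m+1)n} = \theta\, A_0^{-1} \widetilde{T}$, and the whole task is to bound the Frobenius norm of the right-hand side by a constant independent of $\theta$.

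Next I would apply submultiplicativity and the Kronecker identity $\|A_0^{-1}\|_{\rm F}^2 = (m+1)\,\|S_0^{-1}\|_{\rm F}^2$, exactly as in the proof of Theorem~\ref{thm:matrix_bound}. For $\|\widetilde{T}\|_{\rm F}$ I would repeat the Cauchy--Schwarz estimate of that proof verbatim, with $T(\xi)$ replacing $\Delta S(\xi)$, namely
\begin{equation*}
\| \widetilde{T} \|_{\rm F}^2
= \sum_{i,j=0}^m \sum_{\mu,\nu=1}^n \abs*{\sprod{\phi_i(\xi)\phi_j(\xi), t_{\mu,\nu}(\xi)}}^2
\le \Bigl( \sum_{i,j=0}^m \|\phi_i(\xi)\phi_j(\xi)\|_{\ltwospace}^2 \Bigr)\, \bigl\| \|T(\xi)\|_{\rm F} \bigr\|_{\ltwospace}^2.
\end{equation*}
The right-hand side is finite since the components of $T$ lie in $\ltwospace$ by assumption, and the sum over $i,j$ only depends on $m$ and on the chaos basis.

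Combining these two bounds gives
\begin{equation*}
\|A_0^{-1} A - I_{(m+1)n}\|_{\rm F}
\le \theta\, C_m \, \|S_0^{-1}\|_{\rm F} \, \bigl\| \|T(\xi)\|_{\rm F} \bigr\|_{\ltwospace}
\end{equation*}
with $C_m$ as in Theorem~\ref{thm:matrix_bound}, and letting $\theta \to 0$ yields the asserted $O(\theta)$ behavior. There is no real obstacle: the only thing to verify carefully is that Corollary~\ref{cor:constant_galerkin_projection} legitimately contributes the clean $A_0$-block of the decomposition, so that the entire $\theta$-dependence is isolated in the single term $\theta \widetilde{T}$; once that is in place, the previous theorem's estimate does all the work.
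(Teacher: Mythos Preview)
Your argument is correct and in fact more direct than the paper's. You observe that the Galerkin projection is linear and that the constant part $S_0$ projects exactly to $A_0 = I_{m+1}\otimes S_0$ by Corollary~\ref{cor:constant_galerkin_projection}, so $A = A_0 + \theta\widetilde{T}$ with the whole $\theta$-dependence isolated; then the Cauchy--Schwarz estimate from the proof of Theorem~\ref{thm:matrix_bound} (applied with $T(\xi)$ in place of $\Delta S(\xi)$) finishes immediately and even yields the explicit bound $\theta\,C_m\,\|S_0^{-1}\|_{\rm F}\,\bigl\|\|T(\xi)\|_{\rm F}\bigr\|_{\ltwospace}$.

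The paper instead takes a detour: it first centers $T$, writing $S(\xi) = (S_0+\theta\bar{T}) + \theta(T(\xi)-\bar{T})$ with $\bar{T}$ the matrix of expected values, applies Theorem~\ref{thm:matrix_bound} to the preconditioner $\bar{A} = I_{m+1}\otimes(S_0+\theta\bar{T})$, and then corrects from $\bar{A}^{-1}$ to $A_0^{-1}$ via $\bar{A}^{-1} = A_0^{-1} + O(\theta)$. This route stays closer to the \emph{statement} of Theorem~\ref{thm:matrix_bound} (whose preconditioner is built from a fixed evaluation of $S$) at the cost of an extra perturbation step and a ``sufficiently small $\theta$'' clause for the invertibility of $S_0+\theta\bar{T}$. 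Your route reuses only the \emph{estimate} inside that proof, avoids the centering and the Neumann-series correction, and gives a bound valid for all $\theta>0$, not merely asymptotically.
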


\begin{proof}
Since the entries of $T(\xi)$ are assumed to be square-integrable, also the 
expected values are finite.
Let $\bar{T}$
be the constant matrix containing the expected values of $T(\xi)$.  
We apply the decomposition
\begin{equation*}
S(\xi) = (S_0 + \theta \bar{T}) + \theta (T(\xi) - \bar{T}).
\end{equation*}
The matrix $S_0 + \theta \bar{T}$ is non-singular for sufficiently 
small~$\theta$.  Moreover, we obtain the relation
$(S_0 + \theta \bar{T})^{-1} = S_0^{-1} + O(\theta)$. 
Theorem~\ref{thm:matrix_bound} yields
\begin{equation*}
\| \bar{A}^{-1} A - I_{(m+1)n} \|_{\rm F} \le 
C_m \, \| (S_0 + \theta \bar{T})^{-1} \|_{\rm F} \,
\big\| \| \theta (T - \bar{T}) \|_{\rm F} \big\|_{\ltwospace}
\end{equation*}
with $\bar{A} = I_{m+1} \otimes (S_0+\theta \bar{T})$. 
It holds that $\bar{A} = A_0 + O(\theta)$ and thus $\bar{A}^{-1} = A_0^{-1} + 
O(\theta)$.  We conclude
\begin{equation*}
\| {A}_0^{-1} A - I_{(m+1)n} \|_{\rm F} \le 
\left( C_m \, \left( \| S_0^{-1} \|_{\rm F} + O(\theta) \right) \,
\theta \, \big\| \| T - \bar{T} \|_{\rm F} \big\|_{\ltwospace} \right) 
+ O(\theta)  = O(\theta),
\end{equation*}
which confirms~\eqref{eqn:matrix_behaviour}.
\end{proof}

An important case of Theorem~\ref{thm:matrix_asymptotic} is $\bar{T} = 0$, 
i.e., these expected values are zero. 
Then $A_0 = \bar{A}$ is the mean value preconditioner.

\begin{corollary} \label{cor:matrix_asymptotic} 
Under the assumptions of Theorem~\ref{thm:matrix_bound}, 
the Frobenius norm satisfies the estimate
\begin{equation} \label{eqn:matrix_norm_small} 
\| \bar{A}^{-1} A - I_{(m+1)n} \|_{\rm F} < 1 
\end{equation}
for all sufficiently small $\Delta S$.
\end{corollary}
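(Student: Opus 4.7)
The plan is to deduce this corollary directly from Theorem~\ref{thm:matrix_bound} by isolating the dependence on $\Delta S$ in the upper bound.

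First, I would apply Theorem~\ref{thm:matrix_bound} to obtain
\begin{equation*}
\| \bar{A}^{-1} A - I_{(m+1)n} \|_{\rm F} \le
C_m \, \| S(\bar{\xi})^{-1} \|_{\rm F} \,
\big\| \| \Delta S(\xi) \|_{\rm F} \big\|_{\ltwospace}.
\end{equation*}
Next, I would observe that the constant $C_m$ depends only on the chosen orthonormal polynomial basis $(\phi_i)_{i=0}^m$ and is thus independent of $\Delta S$, and that $\| S(\bar{\xi})^{-1} \|_{\rm F}$ depends only on the deterministic matrix $S(\bar{\xi})$ evaluated at the expected value $\bar{\xi}$, which likewise does not depend on $\Delta S$. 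Consequently, the constant
\begin{equation*}
\kappa \coloneq C_m \, \| S(\bar{\xi})^{-1} \|_{\rm F}
\end{equation*}
is a fixed positive real number.

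I would then conclude by noting that whenever the random perturbation is small enough in the sense
\begin{equation*}
\big\| \| \Delta S(\xi) \|_{\rm F} \big\|_{\ltwospace} < \frac{1}{\kappa},
\end{equation*}
the upper bound above is strictly less than one, which proves~\eqref{eqn:matrix_norm_small}. Since I am essentially unpacking a quantitative bound already established in Theorem~\ref{thm:matrix_bound}, I do not expect any real obstacle; the only subtle point worth stating explicitly is that the smallness assumption on $\Delta S$ is to be understood in the $\ltwospace$ sense of the Frobenius norm of $\Delta S(\xi)$, i.e., the same norm that appears on the right-hand side of~\eqref{eqn:matrix_bound1}.
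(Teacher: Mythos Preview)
Your proposal is correct and is exactly the argument the paper has in mind: the corollary is stated without proof because it follows immediately from the bound~\eqref{eqn:matrix_bound1} by observing that $C_m$ and $\|S(\bar\xi)^{-1}\|_{\rm F}$ are fixed while the factor $\big\|\|\Delta S(\xi)\|_{\rm F}\big\|_{\ltwospace}$ can be made arbitrarily small. Your explicit identification of the threshold $1/\kappa$ and of the norm in which $\Delta S$ must be small is a welcome clarification.
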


Likewise, the Frobenius norm using $A_0$ instead of $\bar{A}$ is smaller 
than one if the parameter $\theta$ is sufficiently small 
in the context of Theorem~\ref{thm:matrix_asymptotic}.

A stationary iterative scheme for solving a linear system $A x = b$ 
reads as 
\begin{equation} \label{eqn:stationary_iteration}
B x^{(i+1)} = b - (A-B) x^{(i)} \qquad \mbox{for} \; i=0,1,2, \ldots 
\end{equation}
with a non-singular matrix~$B$ which should approximate~$A$, 
see~\cite[p.~621]{StoerBulirsch}.
In each iteration step, we have to solve a linear system with 
coefficient matrix~$B$. 
The property~\eqref{eqn:matrix_norm_small} is sufficient for the global 
convergence of the iteration~\eqref{eqn:stationary_iteration} 
using $B = \bar{A}$. 
The computational cost of an iteration step is much less than 
the steps in GMRES using~$\bar{A}$ as preconditioner, 
because the construction of Krylov subspaces is avoided.
In practice, we do not know if $\Delta S$ is sufficiently small 
such that the bound~\eqref{eqn:matrix_norm_small} is guaranteed. 
Nevertheless, it is worth to try this stationary iteration, 
as we will observe in Section~\ref{sect:experiments2d}.


\section{Numerical experiments in 1D}
\label{sect:experiments1d}

Our model problem in one space dimension is the stochastic Helmholtz 
equation~\eqref{eqn:stochastic_helmholtz} on $Q = \oo{0, 1}$ with absorbing 
boundary conditions.  The right-hand side is the point source 
$f(x) = \delta(x - \frac12)$, similarly to, 
e.g.,~\cite{GarciaSeteNabben2021, Livshits2015, 
SheikhLahayeGarciaNabbenVuik2016, vanGijzenErlanngaVuik2007}, 
where the right-hand side is a (possibly scaled) point source.
We consider a random wavenumber $k(x, \xi) = k(\xi)$ constant in space, 
which is uniformly distributed in some interval $\cc{k_{\min}, k_{\max}}$ 
with $0 < k_{\min} < k_{\max}$. 
Equivalently, we define
\begin{equation} \label{eqn:kp}
k(\xi) = (1 + \theta \xi) \overline{k}
\end{equation}
with a random variable $\xi$ that is uniformly distributed in $\cc{-1, 1}$, 
a mean value $\overline{k}$, and a real parameter~$\theta \in \oo{0, 1}$. 
It follows that $k_{\min} = (1-\theta)\overline{k}$
and $k_{\max} = (1+\theta)\overline{k}$.

In our numerical experiments in one and two spatial dimensions, we compute the 
mesh-size $h = \frac{1}{q + 1}$ in the FD discretization by
\begin{verbatim}
lev = max(ceil(log2((15*maxk)/(2*pi))), 1);
q = 2^lev - 1;
\end{verbatim}
where \verb|maxk| is the maximal value of the wavenumber.
Then the relation $\frac{2 \pi}{k h} \approx \text{constant}$, advocated 
in~\cite[Sect.~4.4.1]{Ihlenburg1998}, is satisfied.
Indeed, the estimate $x \leq \lceil x \rceil \leq x + 1$ for $x \in \R$ implies
$\frac{15k}{2 \pi} \leq q+1 \leq 2 \frac{15 k}{2 \pi}$ for large $k$.
In particular, $q$ grows linearly with $k$ and thus the size of the matrices 
$S(\xi)$ and $A$ (see Section~\ref{sect:discretization}) grows with $k$; see, 
e.g., Figure~\ref{fig:cond_A}.
Our choice for $q$ can be adapted for a future use of a multigrid method (as 
in~\cite{GarciaSeteNabben2021}).

Discretizing the model problem yields a linear algebraic system
\begin{equation} \label{eqn:unpreconditioned}
A x = b
\end{equation}
as given in Theorem~\ref{thm:discretization_1d_abs}.
This one-dimensional problem can be solved by a direct method, 
since the computational work is not too large.
Nevertheless we also consider its solution with the 
GMRES method~\cite{SaadSchultz1986} and investigate the application 
of CSL and mean value preconditioners introduced 
in Sections~\ref{sect:csl} and~\ref{sect:mean-value}, respectively.

By Theorem~\ref{thm:discretization_1d_abs}, the matrix $A$ has the form
\begin{equation}
A = I_{m+1} \otimes T - \ii [B_{ij}] - [C_{ij}].
\end{equation}
If needed, we write $A_\theta$ to indicate the dependence of $A$ on $\theta$, 
and in particular $A_0$ for $\theta = 0$, which corresponds to the 
mean value preconditioner.
Since the wavenumber in~\eqref{eqn:kp} is constant in space, the matrices 
$[B_{ij}]$ and $[C_{ij}]$ simplify to
\begin{align}
[B_{ij}] &= \begin{bmatrix} \sprod{k(\xi) \phi_j(\xi), \phi_i(\xi)} 
\end{bmatrix}_{ij} \otimes D_1,
& D_1 &= \frac{1}{h} \diag(1, 0, \ldots, 0, 1), \label{eqn:Bij_constant_k} \\
[C_{ij}] &= \begin{bmatrix} \sprod{k(\xi)^2 \phi_j(\xi), \phi_i(\xi)} 
\end{bmatrix}_{ij} \otimes D_2,
& D_2 &= \diag \Bigl( \frac{1}{2}, 1, \ldots, 1, \frac{1}{2} \Bigr),
\label{eqn:Cij_constant_k}
\end{align}
see Lemma~\ref{lem:discretization_1d_abs_k_xi}, and, by 
Lemma~\ref{lem:discretization_1d_abs_bandwith},
\begin{equation}
B_{ij} = 0 \quad \text{for } \abs{i-j} > 1, \quad
C_{ij} = 0 \quad \text{for } \abs{i-j} > 2.
\end{equation}
In other words, the matrices
$\begin{bmatrix} \sprod{k(\xi) \phi_j(\xi), \phi_i(\xi)} \end{bmatrix}_{ij}$
and
$\begin{bmatrix} \sprod{k(\xi)^2 \phi_j(\xi), \phi_i(\xi)} \end{bmatrix}_{ij}$ 
are tridiagonal and pentadiagonal, respectively, 
due to the orthogonality properties of the polynomials.

\begin{remark} \label{rem:block_diagonal}
In the deterministic case $k(\xi) = \overline{k}$ in~\eqref{eqn:kp}, 
i.e., $\theta = 0$, the matrices
$[B_{ij}] = \overline{k} I_{m+1} \otimes D_1$ and $[C_{ij}] = 
\overline{k}^2 I_{m+1} \otimes D_2$ are diagonal, and
\begin{equation}
A_0
= I_{m+1} \otimes (T - \ii \overline{k} D_1 - \overline{k}^2 D_2)
= I_{m+1} \otimes S(0)
\end{equation}
with $S(\xi)$ from Theorem~\ref{thm:discretization_1d_abs}.
This shows that $A_0$ is block-diagonal with $m+1$ identical diagonal blocks.  
The latter are the FD-discretization of the deterministic Helmholtz equation 
with wavenumber $\overline{k}$ (associated to $\xi = 0$).
\end{remark}

If not specified otherwise, we use $m = 3$ in the stochastic Galerkin method 
and $\theta = 0.1$ in~\eqref{eqn:kp}.
Finally, we also consider the shifted Helmholtz equation 
\eqref{eqn:shifted_helmholtz} with shift $\beta = \frac12$ and denote 
the CSL preconditioner by $M = M(\frac12)$, see~\eqref{eqn:M}.  
As for $A$, we write $M_\theta$ if we wish to emphasize the dependence on 
$\theta$.

The numerical experiments have been performed in the software package 
MATLAB R2020b on an i7-7500U @ 2.70GHz CPU with 16 GB RAM.

\subsection{Spectra}

By Theorem~\ref{thm:spectrum}, the eigenvalues of the CSL preconditioned matrix 
$A M^{-1}$ lie in the closed disk~\eqref{eqn:inclusion_disk}.
This is illustrated in the left panel of Figure~\ref{fig:spectrum}, which 
displays the spectra of $A M^{-1}$ (with $\theta = 0.1$) and $A_0 M_0^{-1}$ 
(i.e., with $\theta = 0$).
Each eigenvalue of $A_0 M_0^{-1}$ is $(m+1)$-fold, since $A_0 = I_{m+1} \otimes 
S(0)$ is block-diagonal with identical diagonal blocks, see 
Remark~\ref{rem:block_diagonal}, and similarly for $M_0$.
For $\theta \neq 0$, the matrix $A M^{-1}$ is not block-diagonal, and 
$A M^{-1}$ has clusters of $m+1$ eigenvalues close to each $(m+1)$-fold 
eigenvalue of $A_0 M_0^{-1}$.
This can be observed in the figure with $m + 1 = 4$.
The right panel in Figure~\ref{fig:spectrum} displays the spectrum 
of $A A_0^{-1}$ for the mean value preconditioner. 
The eigenvalues are clustered at $1$, 
which suggests a fast convergence of GMRES.
If the eigenvalues satisfy $\abs{\lambda - 1} < 1$ then the stationary 
method~\eqref{eqn:stationary_iteration} with $B = A_0$ converges.

\begin{figure}
{\centering
\includegraphics[width=0.47\linewidth]{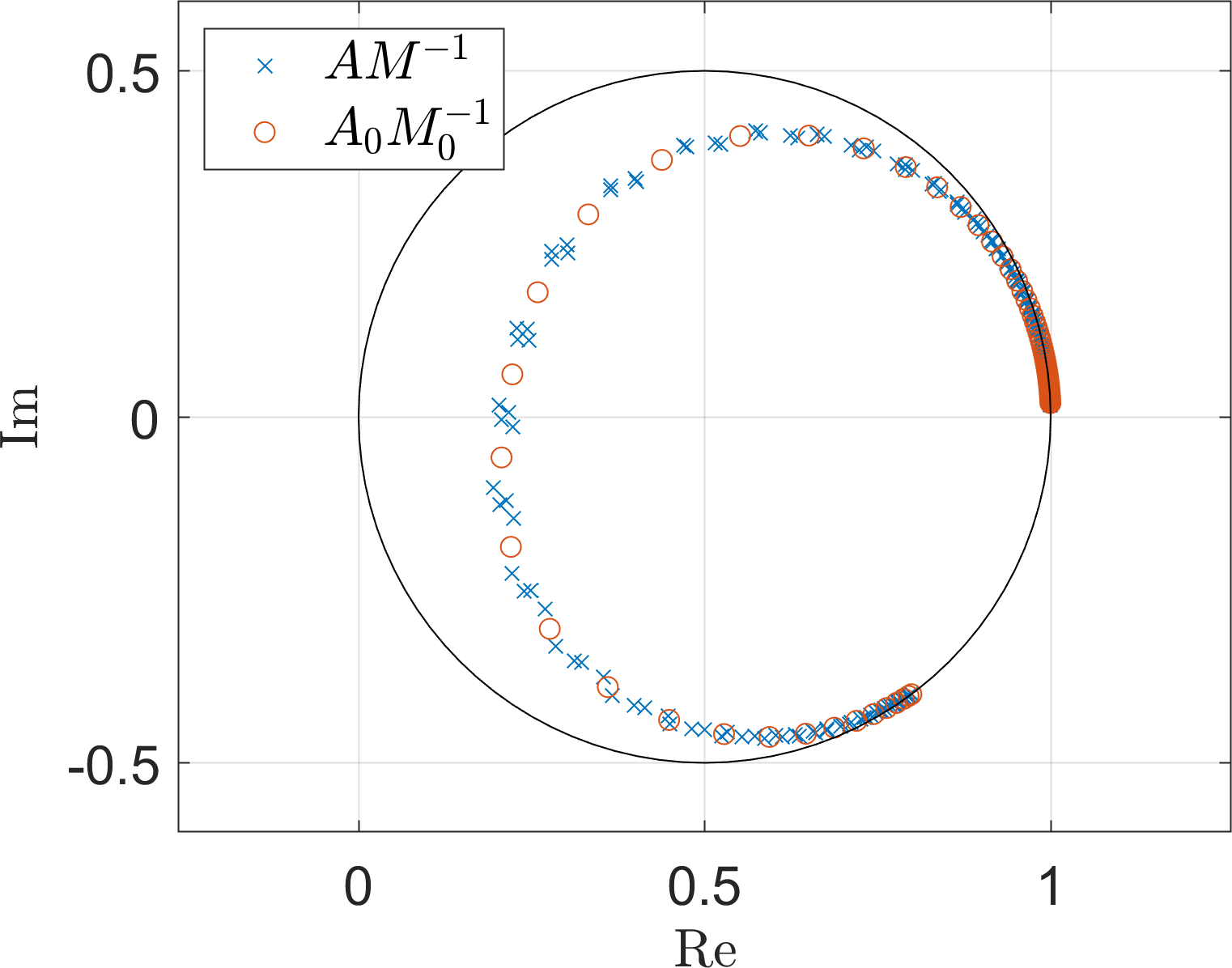}
\includegraphics[width=0.47\linewidth]{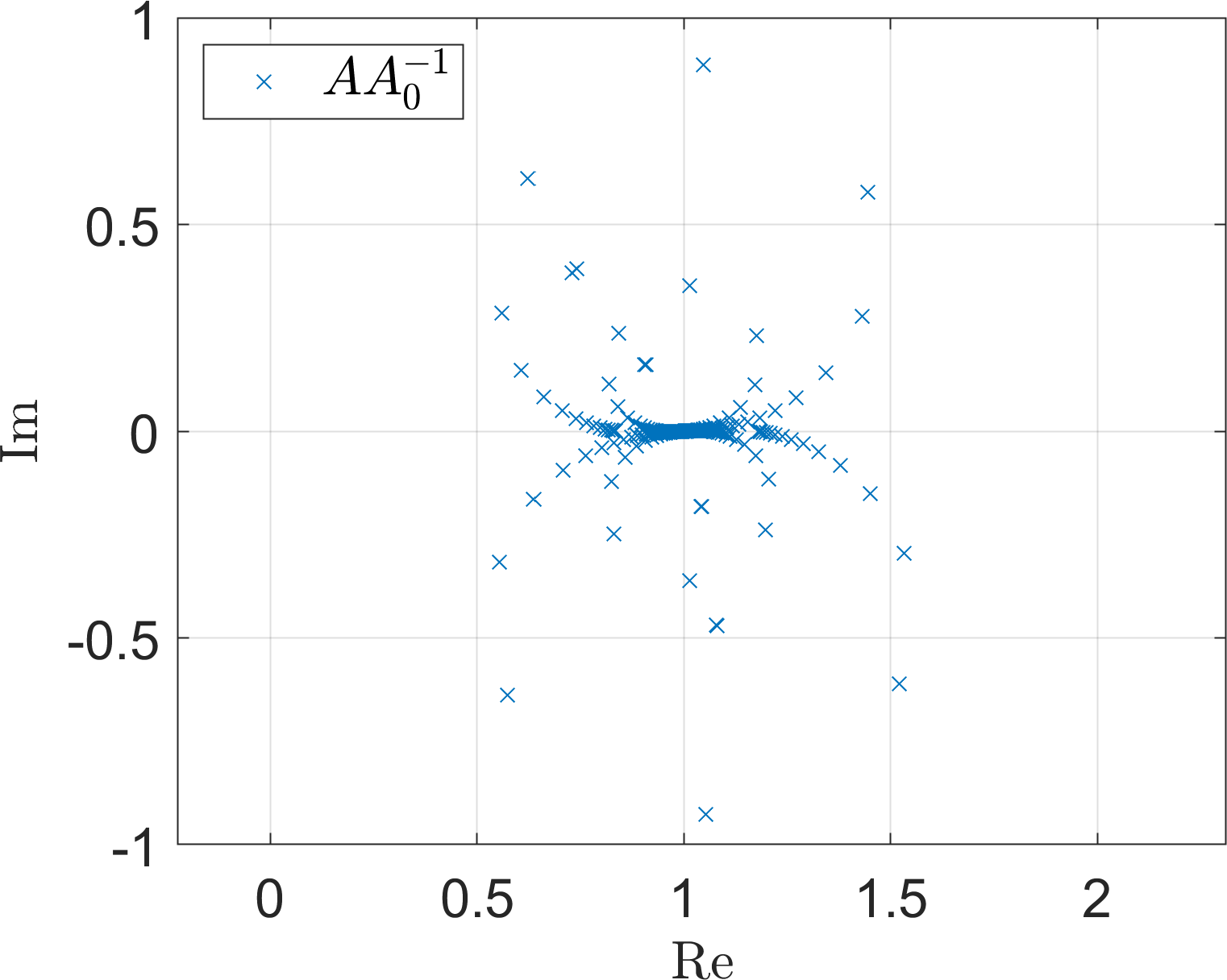}

}
\caption{Left: Spectrum of 
$A M^{-1}$ for $\overline{k} = 50$, $m = 3$, $\theta 
= 0.1$ (crosses) and $\theta = 0$ (circles).
The large solid circle illustrates~\eqref{eqn:circle}.
Right: Spectrum of $A A_0^{-1}$.}
\label{fig:spectrum}
\end{figure}

\subsection{Condition numbers}

Figure~\ref{fig:cond_A} displays the 2-norm condition numbers of $A$, $M$, $A 
M^{-1}$, $A_0$ and $A A_0^{-1}$ as functions of $\overline{k}$ (with $\theta = 
0.1$).  Clearly, the 
condition numbers of $M$ and $A M^{-1}$ are much smaller than the condition 
number of $A$, which is beneficial when solving the preconditioned linear 
system $A M^{-1} y = b$, $M x = y$ with the CSL preconditioner.
In this example, $\kappa_2(M) \leq 205$ for all $\overline{k}$, which is very 
moderate, and $\kappa_2(A M^{-1})$ grows linearly in $\overline{k}$ from 
$2.6485$ when $\overline{k} = 10$ to only $36.5190$ when $\overline{k} = 200$.
In contrast, $\kappa_2(A)$ is roughly $50$ to $160$ times larger than 
$\kappa_2(A M^{-1})$.
The observed spikes of $\kappa_2(A)$ occur when more discretization points are 
used which leads to a larger size of $A$, compare the curve of \verb|size(A)|.
The condition number of the mean value preconditioned matrix $A A_0^{-1}$ is 
also moderate, growing from $2$ to $141$, which is beneficial for 
solving the preconditioned linear system, while $\kappa_2(A_0)$ is of the order 
of $\kappa_2(A)$.

\begin{figure}
{\centering
\includegraphics[width=0.47\linewidth]{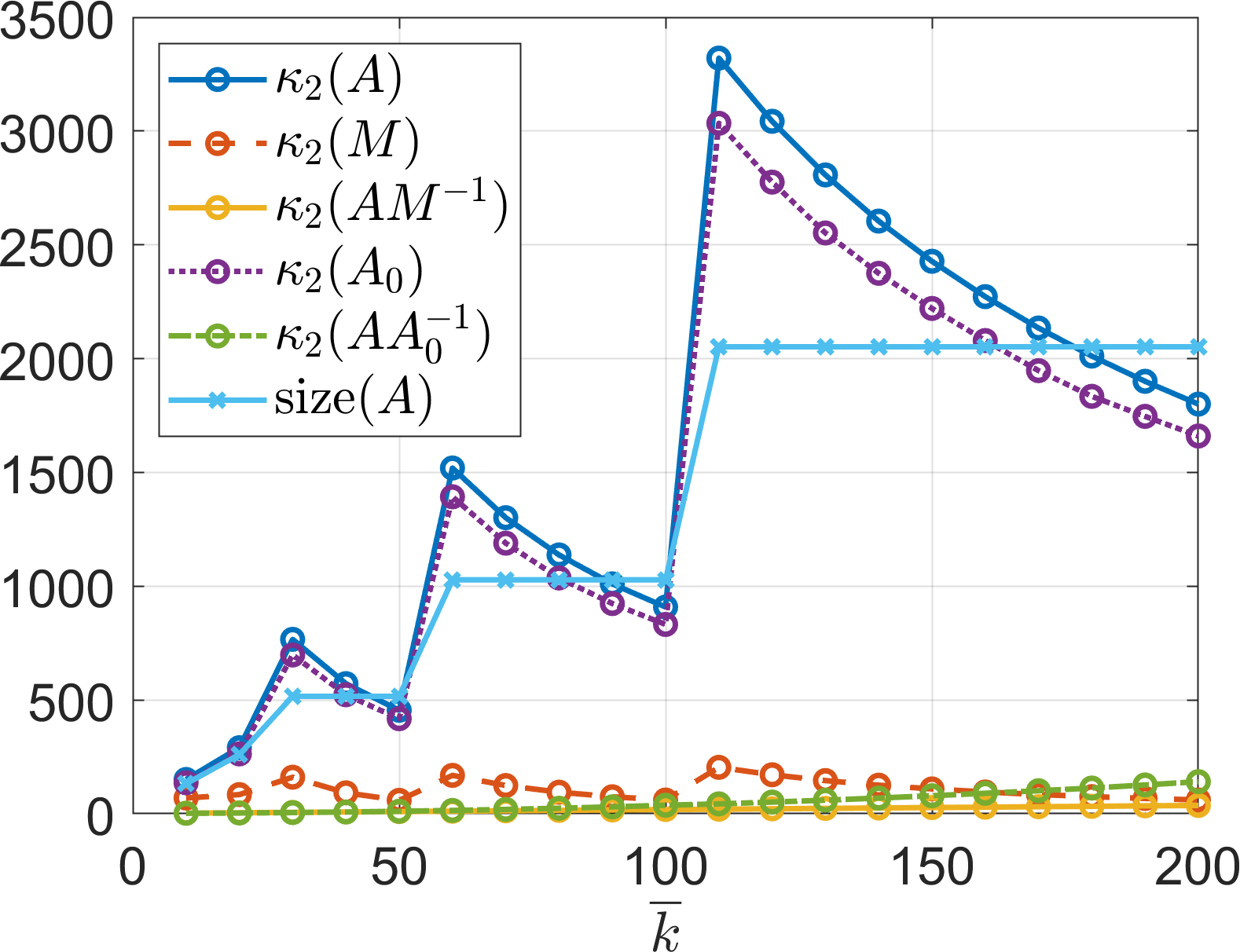}
\includegraphics[width=0.47\linewidth]{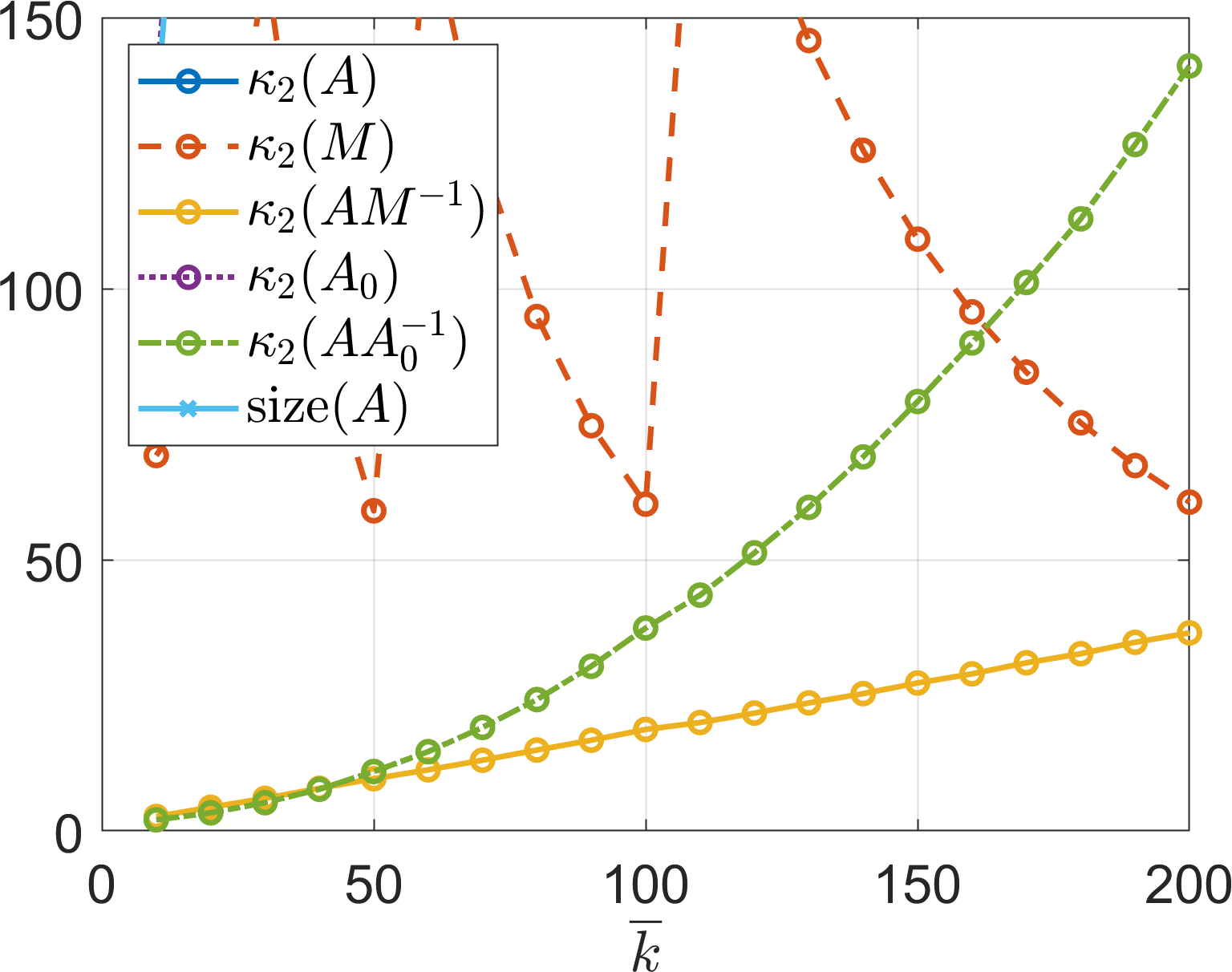}

}
\caption{$2$-norm condition numbers as functions of $\overline{k}$ (left) 
and zoom-in (right).}
\label{fig:cond_A}
\end{figure}

\subsection{GMRES}
\label{sect:gmres_1d}

We solve the unpreconditioned system~\eqref{eqn:unpreconditioned} and 
the right and left preconditioned systems
\begin{equation} \label{eqn:CSL_preconditioned}
A M^{-1} y = b, \quad x = M^{-1} y,
\qquad \text{and} \qquad
M^{-1} A x = M^{-1} b
\end{equation}
with full GMRES (no restarts) and tolerance \verb|tol=1e-12|, using MATLAB's 
built-in \verb|gmres| command.
The residual in the $i$th step is $r^{(i)} = b - A x^{(i)}$ 
for unpreconditioned and right preconditioned GMRES, 
and $M^{-1} r^{(i)}$ for left preconditioned GMRES.  
In particular, the stopping criterion for left and right preconditioning 
is in general different.
We will consider the following three preconditioners:
\begin{enumerate}
\item the CSL preconditioner $M$,
\item the mean value preconditioner $A_0$,
\item the mean value CSL preconditioner $M_0$.
\end{enumerate}
In preconditioned GMRES, we need to solve linear systems with the 
preconditioner, for which we use an $LU$-decomposition.
In one spatial dimension, this is not competitive with the direct solution (see 
the end of Section~\ref{sect:gmres_1d}), but in two spatial dimension the block 
structure of the preconditioners $A_0$ and $M_0$ leads to a competitive method.
In MATLAB, the $LU$-decomposition of the sparse matrix $M$ calls the associated 
routine from UMFPACK; see~\cite{davis}.  The decomposition has the form
\begin{equation}
P M Q = L U
\end{equation}
with a lower triangular matrix $L$, upper triangular matrix $U$, and two 
permutation matrices $P, Q$.  In our implementation, we use
\begin{verbatim}
[L, U, p, q] = lu(M, 'vector');
qt = []; qt(q) = 1:numel(q);
\end{verbatim}
where, instead of the matrices $P, Q$, only vectors $p, q$ representing the 
permutations are stored, and where the vector \verb|qt| describes the inverse 
mapping of the permutation defined by \verb|q|.
Then, we implement $M^{-1} x$ by
\begin{verbatim}
x = U\(L\x(p,:));
x = x(qt,:);
\end{verbatim} 
By Remark~\ref{rem:block_diagonal}, $A_0 = I_{m+1} \otimes S(0)$ is 
block-diagonal with equal diagonal blocks so that, for fixed $\overline{k}$, 
only a single $LU$-decomposition of $S(0) \in \K^{n,n}$ is necessary 
to compute $A_0^{-1} x$ for any vector $x \in \K^{(m+1)n}$.  In our 
implementation, we partition and reshape $x$ so that only one linear system 
with $S(0)$ is solved:
\begin{verbatim}
x = reshape(x, [n, m+1]);
x = U\(L\x(p,:));
x = x(qt,:);
x = reshape(x, [], 1);
\end{verbatim}
The preconditioner $M_0$ is implemented in the same way.

\begin{figure}
{\centering
\includegraphics[width=0.45\linewidth]{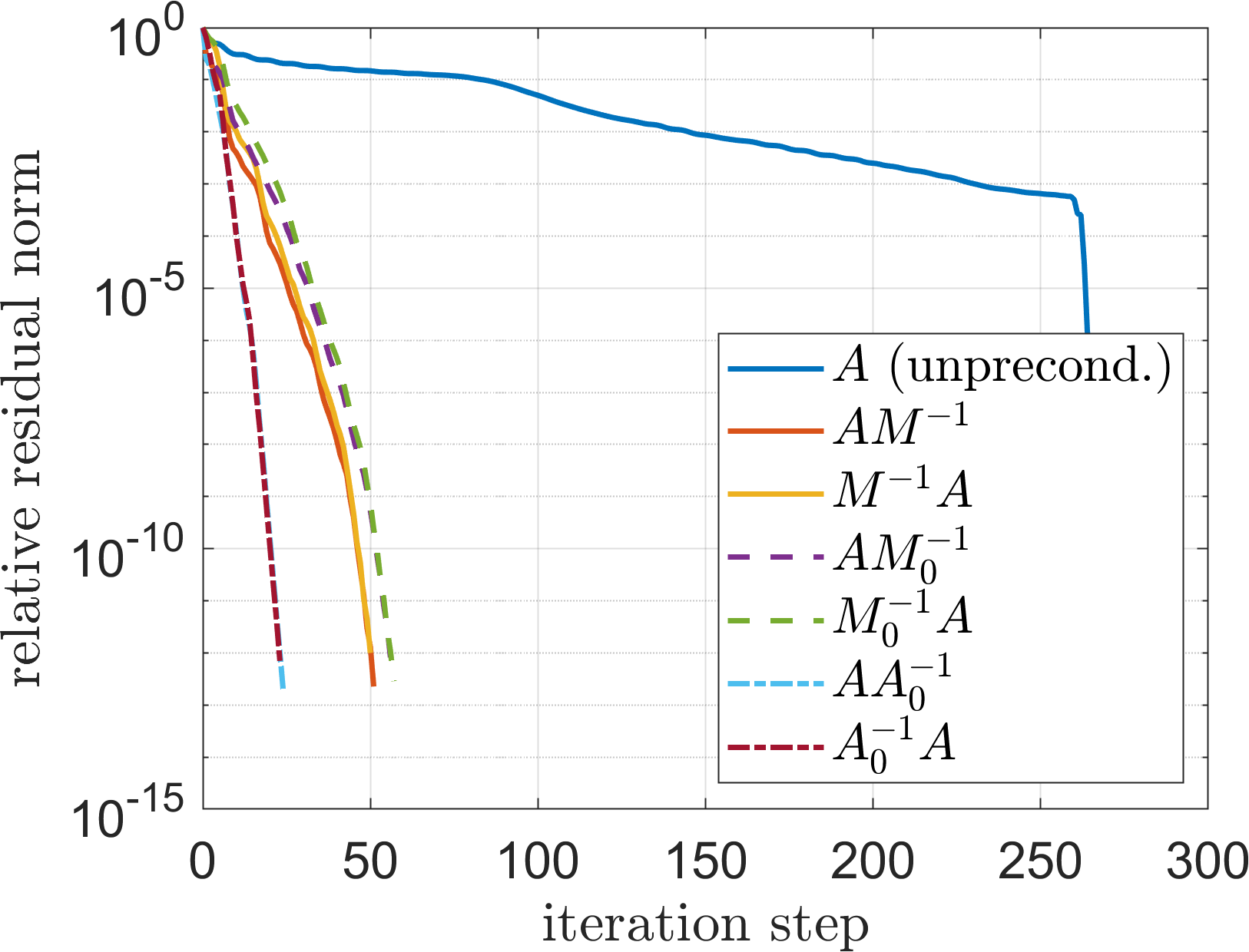}
\includegraphics[width=0.45\linewidth]{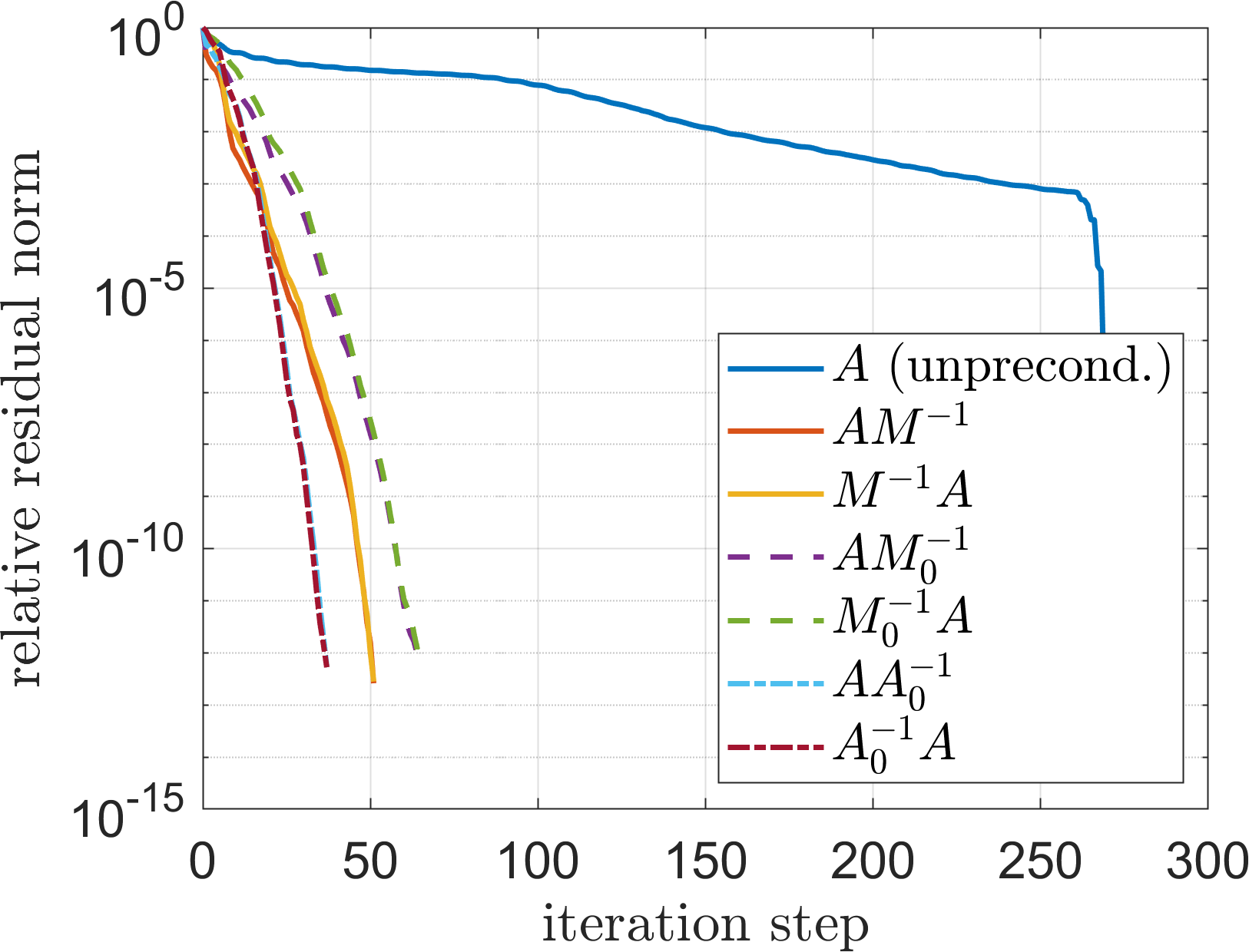}

}
\caption{Relative residual norms when solving~\eqref{eqn:unpreconditioned} 
with GMRES with various preconditioners, $m = 3$, $\overline{k} = 50$, and 
$\theta = 0.1$ (left) or $\theta = 0.2$ (right).}
\label{fig:gmres_relres}
\end{figure}

\begin{table}
{\centering
\begin{tabular}{ccccc}
\toprule
& \multicolumn{2}{c}{$\theta = 0.1$} & \multicolumn{2}{c}{$\theta = 0.2$} \\
preconditioner & left & right & left & right \\
\midrule
unpreconditioned & \multicolumn{2}{c}{$0.5509$} & \multicolumn{2}{c}{$0.5409$} 
\\
$M^{-1}$   & $0.0291$ & $0.0320$ & $0.0298$ & $0.0322$ \\
$M_0^{-1}$ & $0.0316$ & $0.0339$ & $0.0395$ & $0.0401$ \\
$A_0^{-1}$ & $0.0148$ & $0.0148$ & $0.0211$ & $0.0198$ \\
\bottomrule
\end{tabular}

}
\caption{Time in seconds (s) for solving~\eqref{eqn:unpreconditioned} 
and~\eqref{eqn:CSL_preconditioned} with GMRES.}
\label{tab:time_precond_kbar50}
\end{table}

In a first experiment, we fix $\overline{k} = 50$, $\theta = 0.1$ and $m = 3$.
Solving the unpreconditioned system~\eqref{eqn:unpreconditioned} with GMRES 
suffers from a long delay of convergence; see Figure~\ref{fig:gmres_relres}.
In contrast, all three preconditioners $M$, $M_0$, and $A_0$ lead to a 
significant decrease in the number of iteration steps from about $250$ to $50$ 
for $M$ and $M_0$ (factor~$5$), and to about $25$ for $A_0$ (factor $10$); see 
Figure~\ref{fig:gmres_relres} (left panel).
The computation times with the preconditioners $M$ and $M_0$ reduce to about 
$6\%$ of the computation time of unpreconditioned GMRES, while for $A_0$ it 
reduces to about $3\%$; see Table~\ref{tab:time_precond_kbar50}.
The computation times for the preconditioned systems include the computation of 
the $LU$-decomposition (of $M$ or of a diagonal block for $A_0$ or $M_0$).
The differences between computed solutions are very small: $\norm{x - 
x'}_\infty \leq 1.7 \cdot 10^{-14}$ (and typically of order $10^{-15}$), where 
\verb|x=A\b| is the direct solution and $x'$ is a solution computed with GMRES 
(unpreconditioned or with one of the preconditioners).
Left and right preconditioning lead to very similar relative residual norms and 
timings for each preconditioner.
A heuristic explanation why $A_0$ performs better than $M$ and $M_0$, is that 
$A$ is closer to $A_0$ than to $M$ or $M_0$.  Indeed, we have $\norm{A - 
A_0}_\infty < \norm{A - M}_\infty < \norm{A - M_0}_\infty$ in this example.
Repeating this experiment with $\theta = 0.2$ leads to very similar results, 
see Figure~\ref{fig:gmres_relres} and Table~\ref{tab:time_precond_kbar50}, 
so we focus on $\theta = 0.1$.

\begin{figure}[t!]
{\centering
\includegraphics[width=0.43\linewidth]{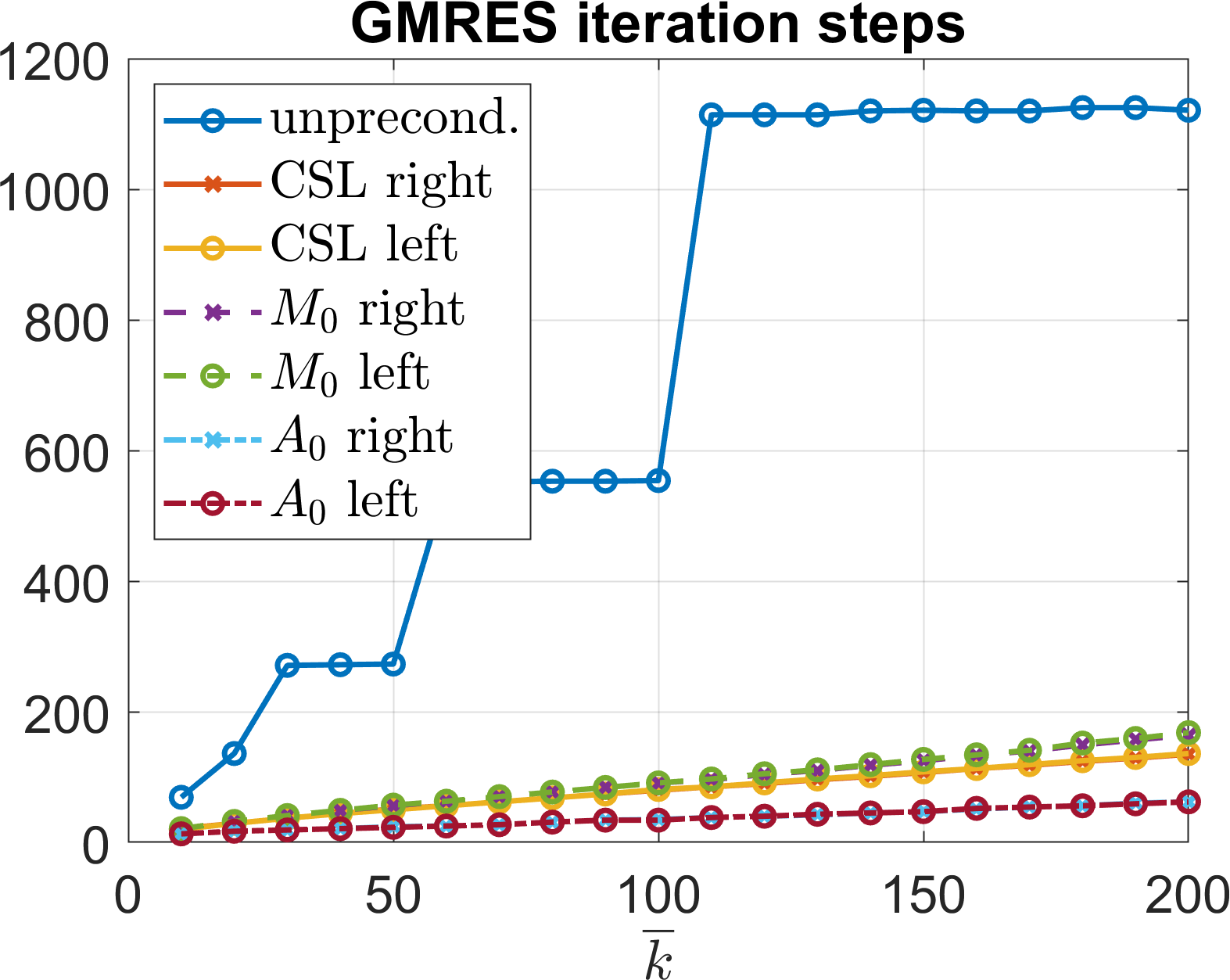}
\includegraphics[width=0.43\linewidth]
{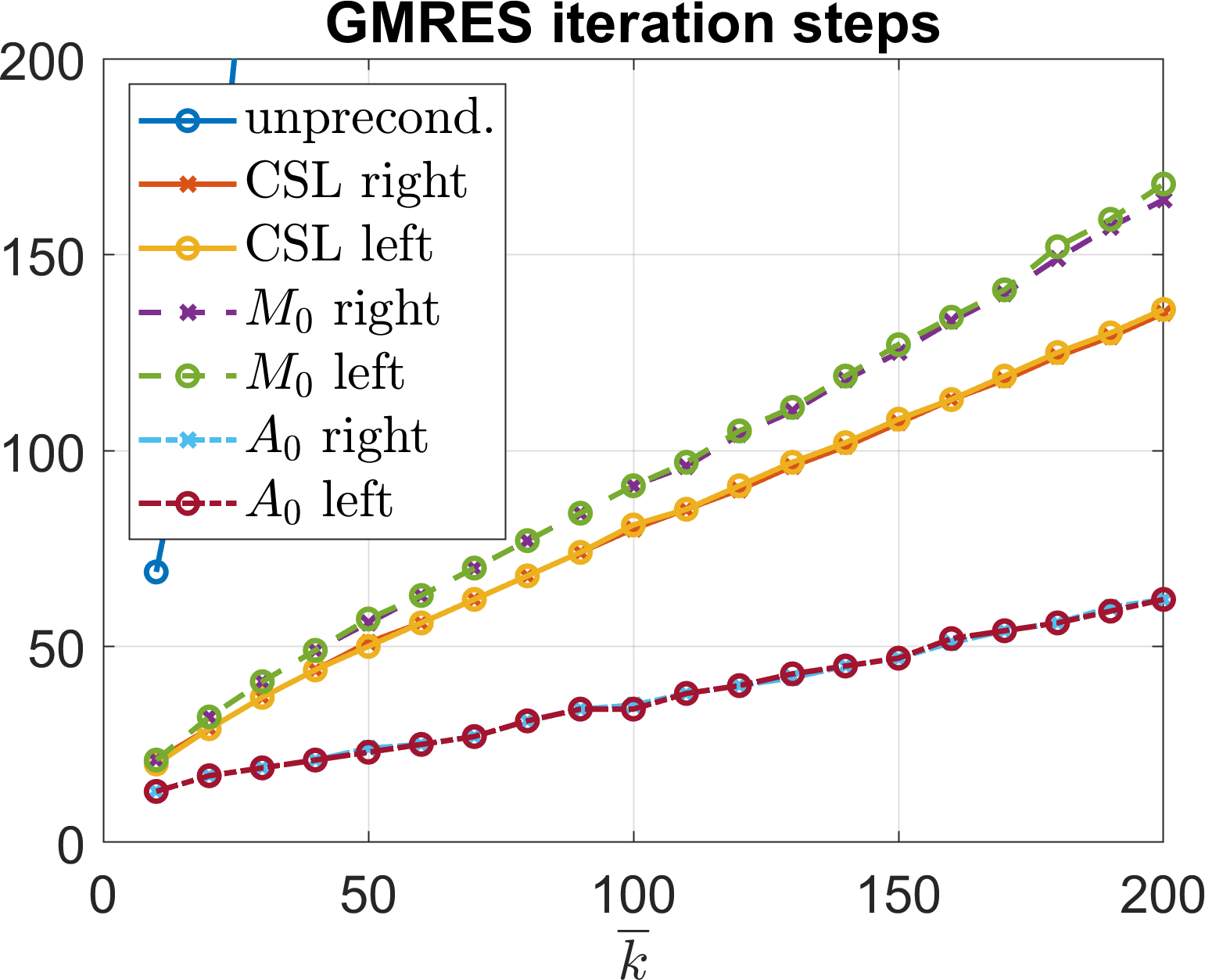}

\vspace{5mm}

\includegraphics[width=0.43\linewidth]{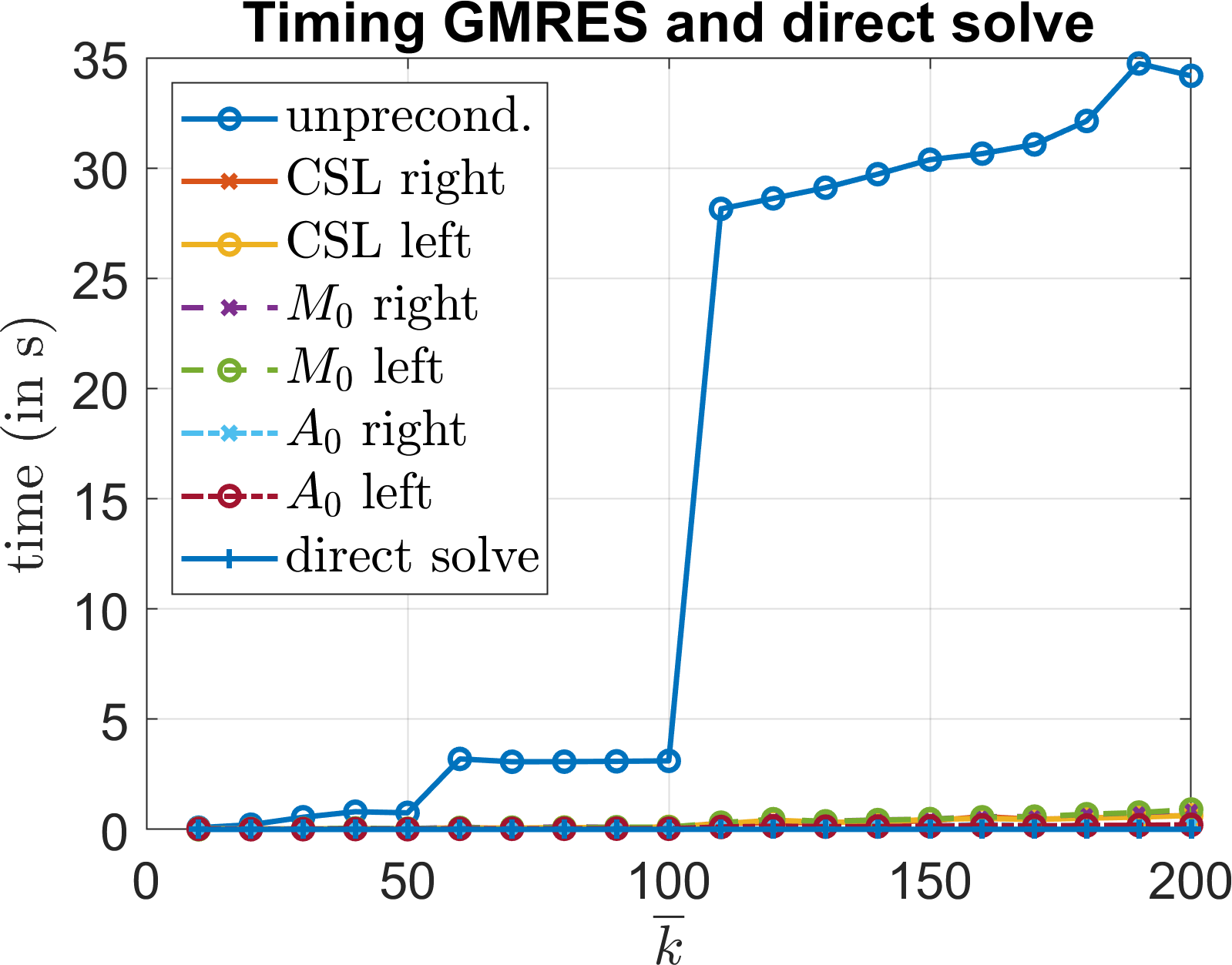}
\includegraphics[width=0.43\linewidth]
{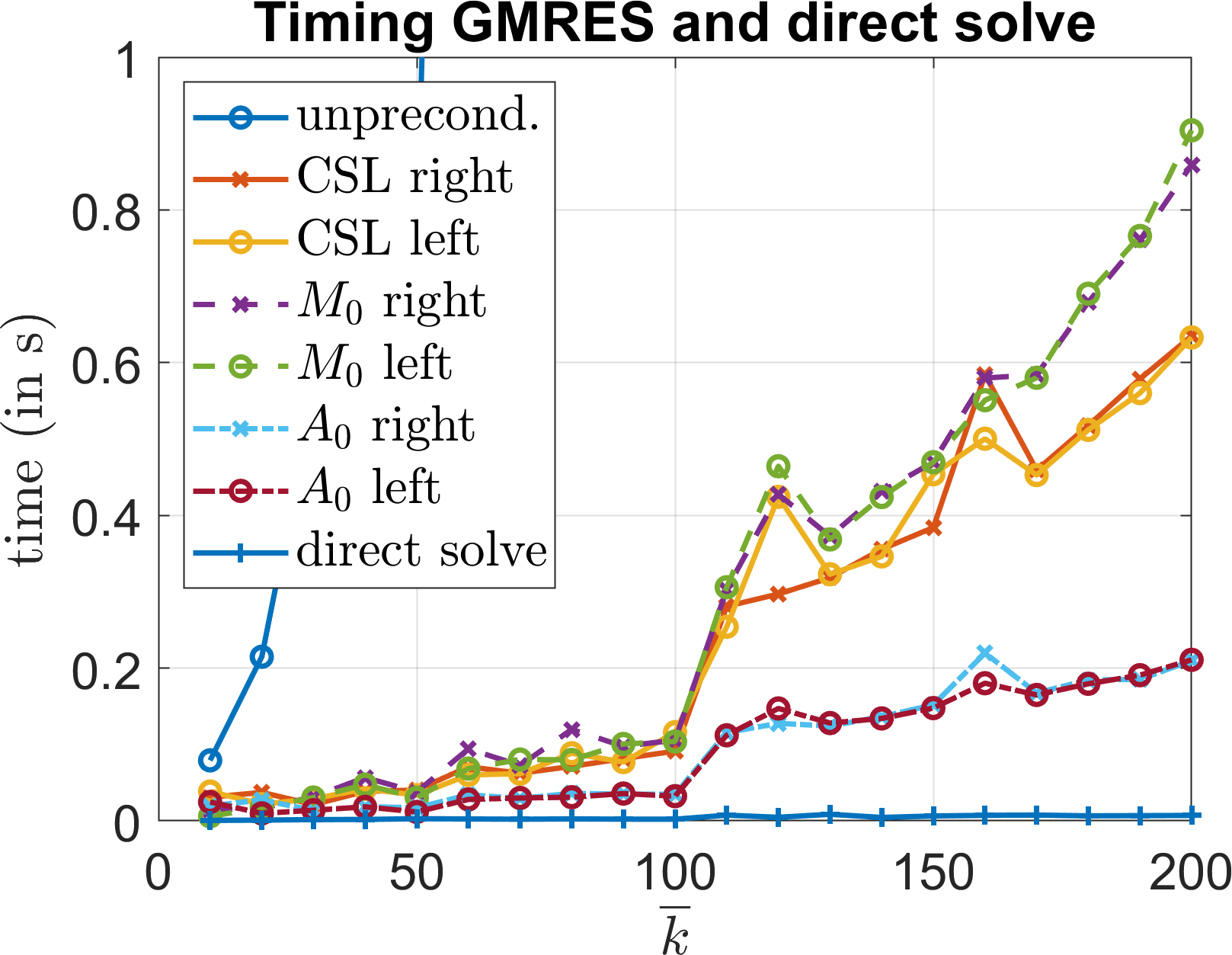}

}
\caption{Number of GMRES iteration steps (top) and computation time in seconds 
(bottom) as functions of $\overline{k}$ for different left and right 
preconditioners with fixed $\theta = 0.1$ and $m = 3$.  The right panels are 
zoom-ins.}
\label{fig:gmres_iter_vs_k}
\end{figure}

In a second experiment, we let $\overline{k}$ vary while $\theta = 0.1$ and $m 
= 3$ are fixed.  Figure~\ref{fig:gmres_iter_vs_k} displays the number of GMRES 
iteration steps (top) and the computation time (bottom) as functions of 
$\overline{k}$.
For small $\overline{k} \in \cc{10, 50}$, the difference between 
unpreconditioned and preconditioned GMRES is not so pronounced, since the 
linear systems are rather small.
For $60 \leq \overline{k} \leq 200$, the three preconditioners significantly 
reduce the number of iteration steps and the computation time compared to 
unpreconditioned GMRES.
The number of iteration steps is reduced to 8--15\% of the number of 
iteration steps in unpreconditioned GMRES when using $M$, to 9--16\% when 
using $M_0$ and to only 3--6\% when using $A_0$ as preconditioner.
GMRES preconditioned with $M$ or $M_0$ needs only 1--4\% of the computation 
of unpreconditioned GMRES, and the computation time of GMRES preconditioned 
with $A_0$ is reduced to 0.5--1.1\% of the computation time of 
unpreconditoned GMRES.
The mean value preconditioner $A_0$ leads to the smallest number of GMRES 
iteration steps and computation time, which is likely due to the fact that 
$A$ is closer to $A_0$ than to $M$ or $M_0$.
Note, however, that the condition number of $A_0$ (and $A$) is much 
larger than that of $M$ and $M_0$.  For $\overline{k} = 150$, we have (rounded 
to the nearest integer)
$\kappa_2(A) = 2428$, $\kappa_2(A_0) = 2220$, $\kappa_2(M) = 109$, 
$\kappa_2(M_0) = 91$;
see also Figure~\ref{fig:cond_A}.
Thus, if accuracy is an issue, it is preferable to work with the CSL 
preconditioners $M$ or $M_0$.

Finally, we note that the direct solution \verb|A\b| with a 
sparse matrix in MATLAB calls an efficient algorithm from UMFPACK; 
see~\cite{davis}.
In the above test example, solving the linear system
\eqref{eqn:unpreconditioned} by GMRES (with or without preconditioner) 
is not competitive with this direct solution, as it is much faster; 
see the bottom right panel in Figure~\ref{fig:gmres_iter_vs_k}.

\subsection{Solutions}

Figure~\ref{fig:gmres_solutions} displays the real and imaginary parts of the 
computed coefficients $v_0$, $v_1$, $v_2$, $v_3$ in the Galerkin approximation 
for $\overline{k}=50$ and $\theta = 0.1$ in~\eqref{eqn:kp}. 
We recognize an effect of the point source at $x=\frac12$ in 
the real part of~$v_0$.

\begin{figure}[t!]
{\centering
\includegraphics[width=0.45\linewidth]{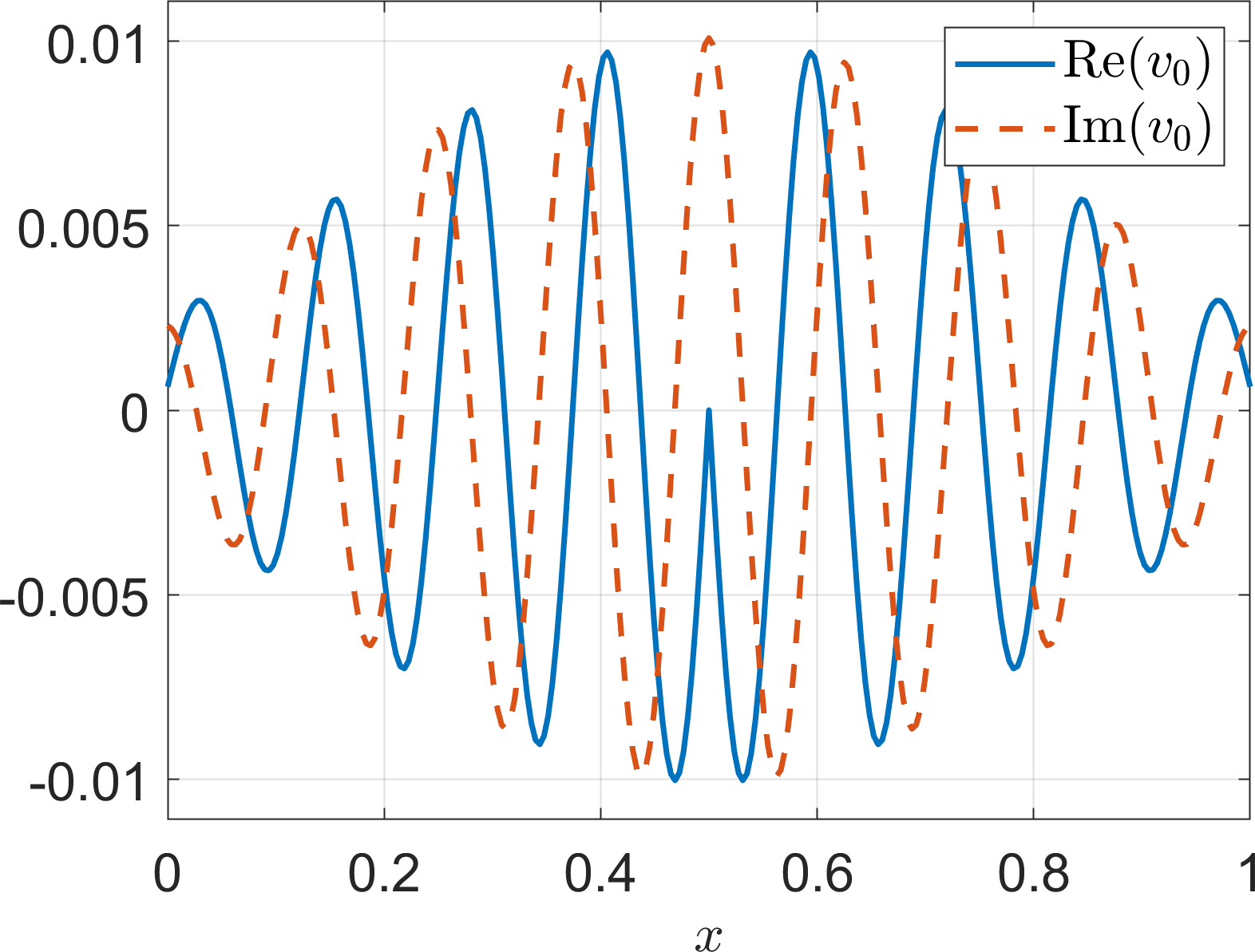}
\includegraphics[width=0.45\linewidth]{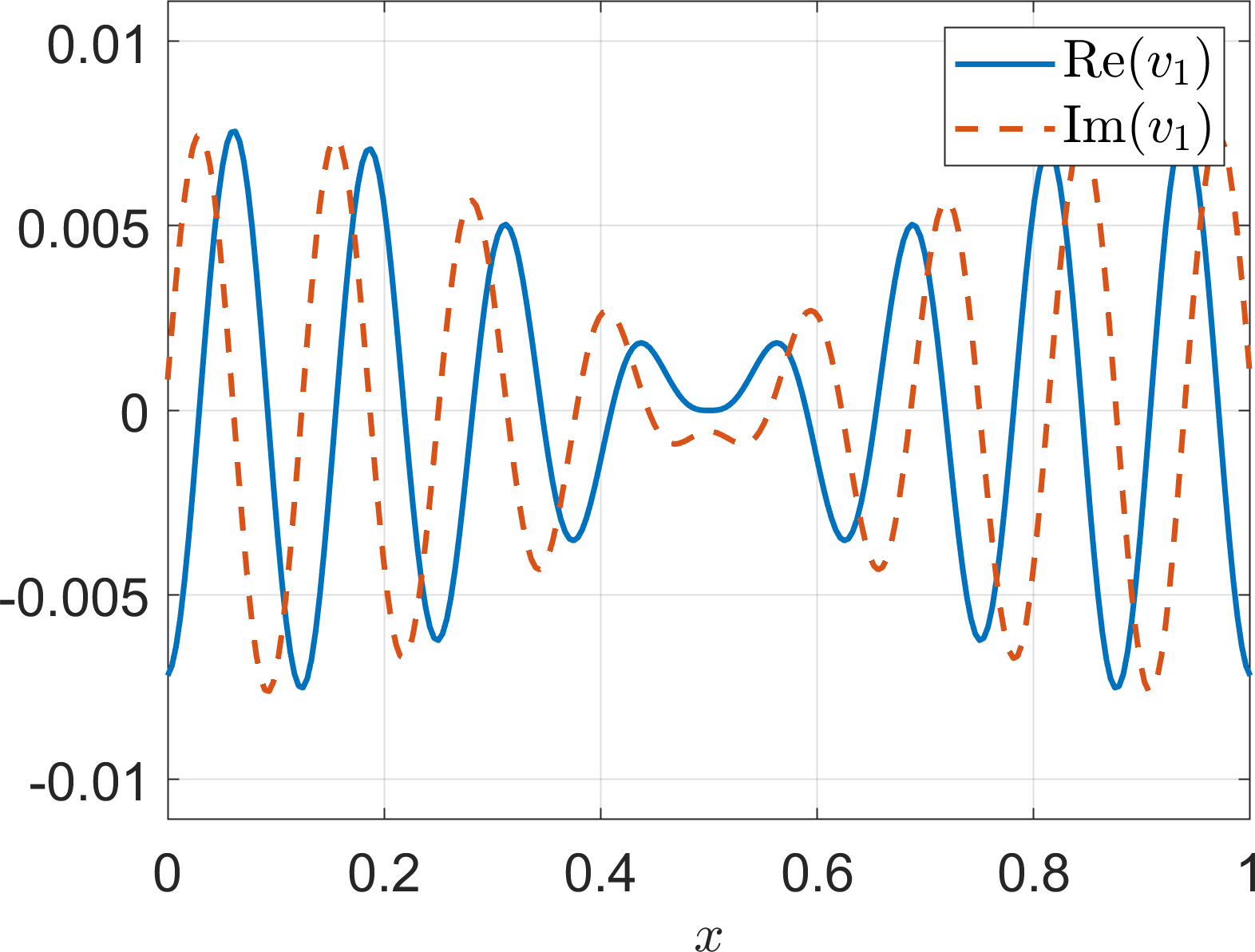}
\vspace{2mm}

\includegraphics[width=0.45\linewidth]{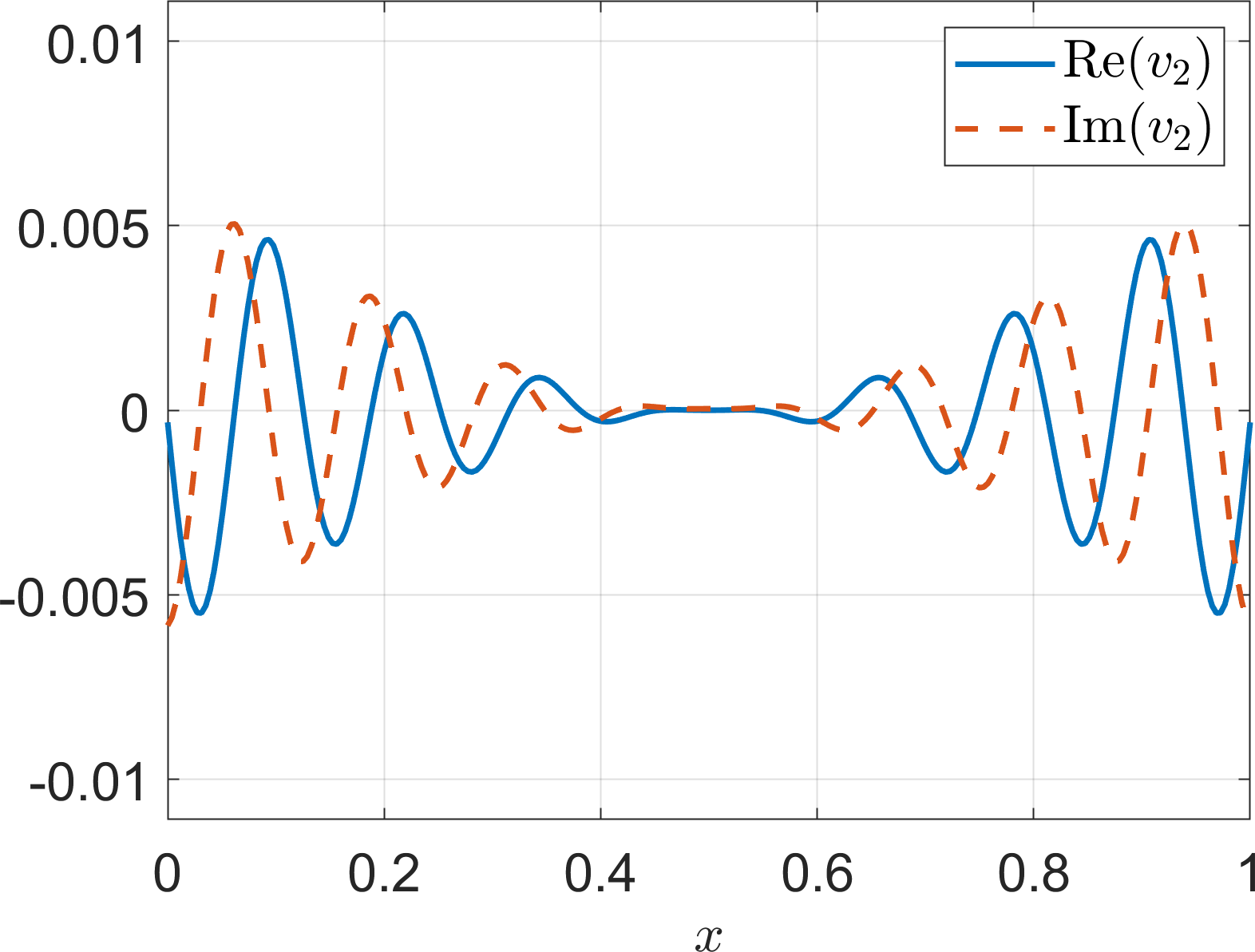}
\includegraphics[width=0.45\linewidth]{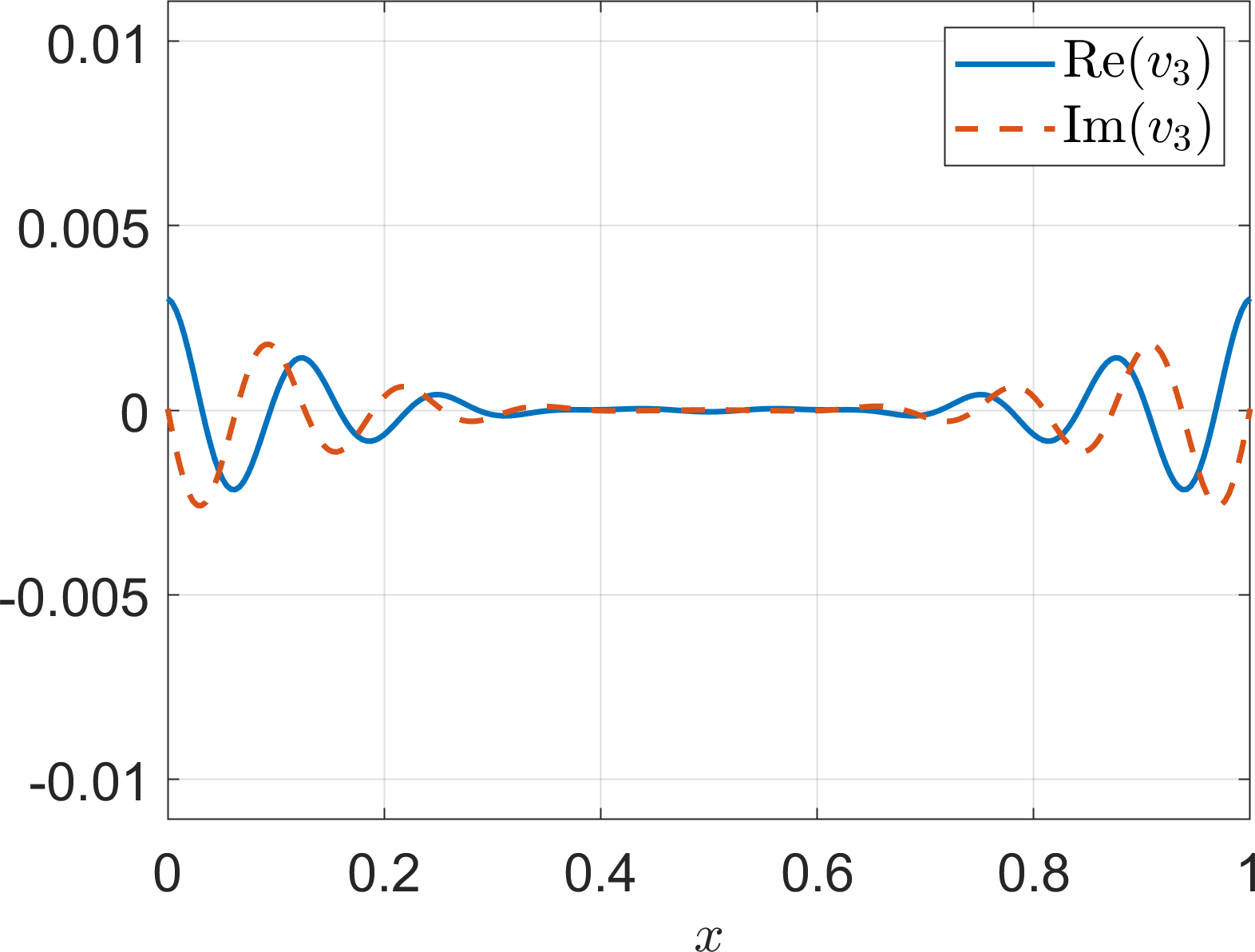}

}
\caption{Plots of the coefficients $v_0, v_1, v_2, v_3$ for 
$\overline{k} = 50$ and $\theta = 0.1$.}
\label{fig:gmres_solutions}
\end{figure}

\begin{figure}[t!]
{\centering
\includegraphics[width=0.47\linewidth]{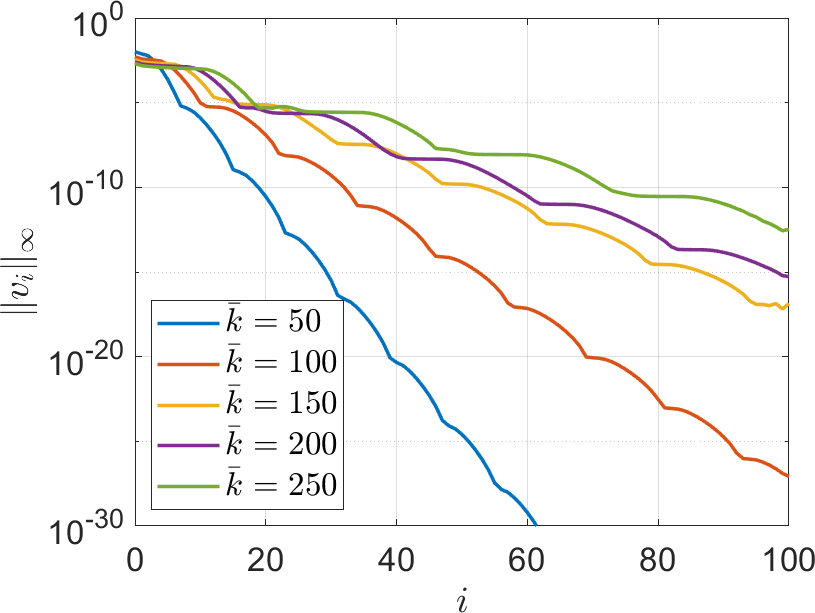}
\includegraphics[width=0.47\linewidth]{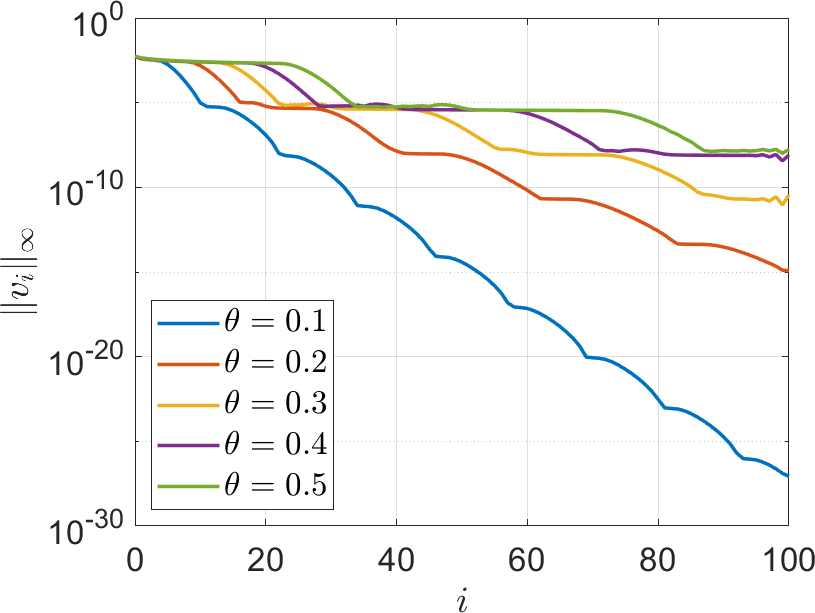}

}
\caption{Maximum-norms $\norm{v_i}_\infty$ as a function of $i = \deg(\phi_i)$.
Left: For fixed $\theta = 0.1$ and different values of $\overline{k}$.
Right: For fixed $\overline{k} = 100$ and different values of $\theta$.}
\label{fig:supnorm_1d}
\end{figure}

We compute the solution for total polynomial degree $m = 100$.
Figure~\ref{fig:supnorm_1d} shows $\norm{v_i}_\infty$ as a function of the 
polynomial degree $i$.
In the left panel, $\theta = 0.1$ is fixed and $\overline{k}$ varies, while in 
the right panel $\theta$ varies and $\overline{k} = 100$ is fixed.
We observe an exponential decay of the coefficients in all cases, which is 
related to the exponential convergence of the PC expansion~\eqref{eqn:pc}.
Larger wavenumbers and larger values of $\theta$ lead to a slower decay of the 
maximum-norm of the coefficients.
The effect of larger $\theta$ on the convergence/decay is more pronounced,
compare, for example, 
the curve for $(\overline{k}, \theta) = (150, 0.1)$ in the left 
panel with the curve for $(100, 0.5)$ in the right panel.

\begin{figure}[t!]
{\centering
\includegraphics[width=0.47\linewidth]{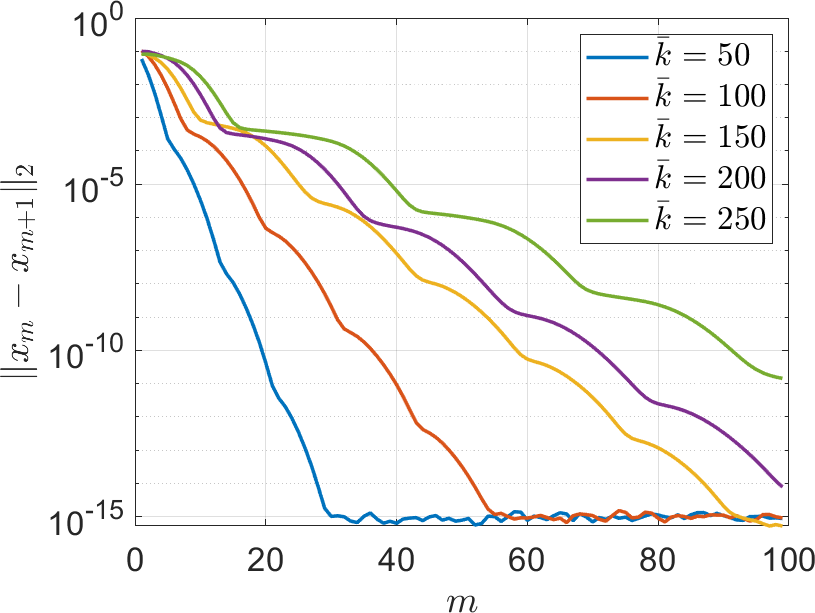}
\includegraphics[width=0.47\linewidth]{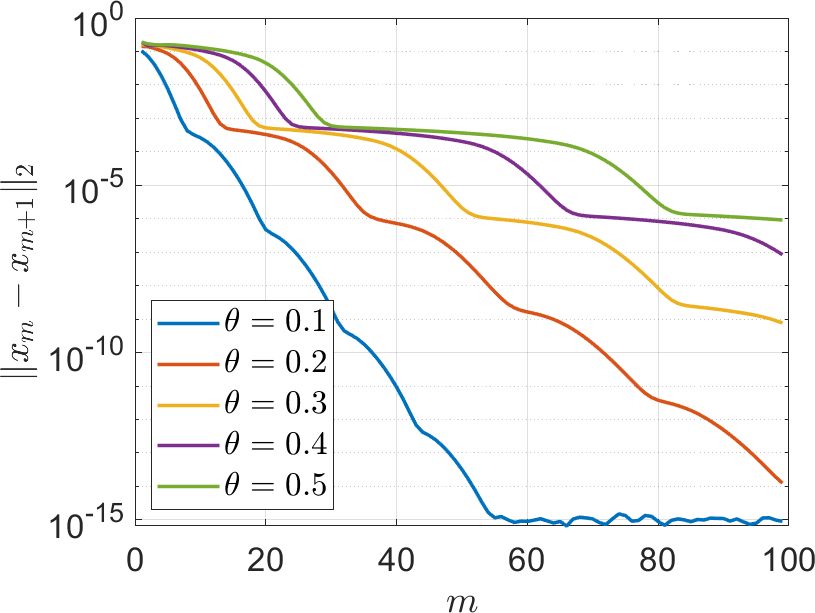}

}
\caption{Norms $\norm{x_m - x_{m+1}}_2$ as a function of the maximal degree 
$m$ in the stochastic Galerkin method.
Left: For fixed $\theta = 0.1$ and different values of $\overline{k}$.
Right: For fixed $\overline{k} = 100$ and different values of $\theta$.}
\label{fig:l2norm_1d}
\end{figure}

Next, we vary $m$ (the maximal degree of the polynomials in the stochastic 
Galerkin method) and denote by $x_m$ the solution 
of~\eqref{eqn:unpreconditioned}, which consists of a discretization of 
the coefficients $v_{0,m}, \ldots, v_{m,m}$ in a Galerkin 
approximation~\eqref{eqn:pc_galerkin} of the solution $u$ of the Helmholtz 
equation; see also Remark~\ref{rem:galerkin_approx}.
The convergence of the stochastic Galerkin method is illustrated by the 
exponential decay of the norms $\norm{x_m - x_{m+1}}_2$ in 
Figure~\ref{fig:l2norm_1d}.


\section{Numerical experiments in 2D}
\label{sect:experiments2d}

We consider the stochastic Helmholtz equation~\eqref{eqn:stochastic_helmholtz} 
in $Q = \oo{0, 1}^2$ with absorbing boundary 
conditions~\eqref{eqn:stochastic_bc_abs},
the point source $f(x,y) = \delta( (x,y) - (\frac{1}{2}, \frac{1}{2}))$ as 
right-hand side,
and space-dependent random wavenumber
\begin{equation} \label{eqn:stochastic_wavenumber_k123}
k(x, y, \xi_1, \xi_2, \xi_3) =
\begin{cases}
(1 + \theta \xi_1) k_1, & y \leq 0.2 + 0.1 x, \\
(1 + \theta \xi_2) k_2, & 0.2 + 0.1 x < y < 0.6 -0.2 x, \\
(1 + \theta \xi_3) k_3, & 0.6 -0.2 x \leq y.
\end{cases}
\end{equation}
on the wedge-shaped domain from~\cite[p.~146]{Livshits2015}; 
similar domains have been examined in 
\cite[Sect.~6.3]{ErlanggaVuikOosterlee2004} 
and~\cite[Sect.~4.4]{GarciaNabben2018}.
The modeling~\eqref{eqn:stochastic_wavenumber_k123} can also be written 
in the form~\eqref{eqn:random_wavenumber} using spatial indicator functions.
The random variables $\xi_1, \xi_2, \xi_3$ are independent and uniformly 
distributed in $\cc{-1, 1}$.  
The mean value of the wavenumber is
\begin{equation} \label{eqn:wavenumber_k123}
\overline{k}(x, y) =
\begin{cases}
k_1, & y \leq 0.2 + 0.1 x, \\
k_2, & 0.2 + 0.1 x < y < 0.6 - 0.2 x, \\
k_3, & 0.6 - 0.2 x \leq y.
\end{cases}
\end{equation}
We discretize the boundary value problem as described in 
Section~\ref{sect:FDM_Galerkin} and obtain the linear algebraic system
$A x = b$ in Theorem~\ref{thm:discretization_2d_abs}.
The number of polynomials in the three random variables $\xi_1, \xi_2, \xi_3$ 
with total degree at most $r$ is, 
see~\eqref{eqn:number_basis_polynomial},
\begin{equation} \label{eqn:number_p_ex}
m + 1 = \textstyle \frac{(r+3)!}{r! \, 3!} .
\end{equation}
Table~\ref{tab:construct_A} includes the number of basis polynomials for 
degrees $r = 0, 1, \ldots, 8$.

\begin{table}
{\centering
\begin{tabular}{crrrr}
\toprule
$r$ & n.basis & size of $A$ & nnz & time (s) \\
\midrule
$0$ & $1$ & $16641$ & $82689$ & $0.3264$ \\
$1$ & $4$ & $66564$ & $364038$ & $0.6592$ \\
$2$ & $10$ & $166410$ & $993300$ & $1.4090$ \\
$3$ & $20$ & $332820$ & $2119728$ & $3.3421$ \\
$4$ & $35$ & $582435$ & $3892575$ & $10.3049$ \\
$5$ & $56$ & $931896$ & $6461094$ & $32.8872$ \\
$6$ & $84$ & $1397844$ & $9974538$ & $101.1674$ \\
$7$ & $120$ & $1996920$ & $14582160$ & $291.5063$ \\
$8$ & $165$ & $2745765$ & $20433213$ & $802.0748$ \\
\bottomrule
\end{tabular}

}
\caption{For different total degrees $r$, number of basis polynomials 
(n.basis), 
size of the matrix~$A$, number of non-zero elements in $A$ (nnz), and time to 
generate $A$.}
\label{tab:construct_A}
\end{table}

Let $k_1 = 30$, $k_2 = 15$, $k_3 = 20$, and $\theta = 0.1$.
Table~\ref{tab:construct_A} shows the size of $A$ and the time (in seconds) 
for constructing the matrix $A$ for polynomial degrees up to 
$r = 0, 1, \ldots, 8$ in the stochastic Galerkin method.
The computation time when solving $A x = b$ directly with \verb|x=A\b| in 
MATLAB grows exponentially as a function of~$r$, 
see Figure~\ref{fig:2d_dir_vs_gmres} (left panel).

\begin{figure}[t!]
{\centering
\includegraphics[width=0.47\linewidth]{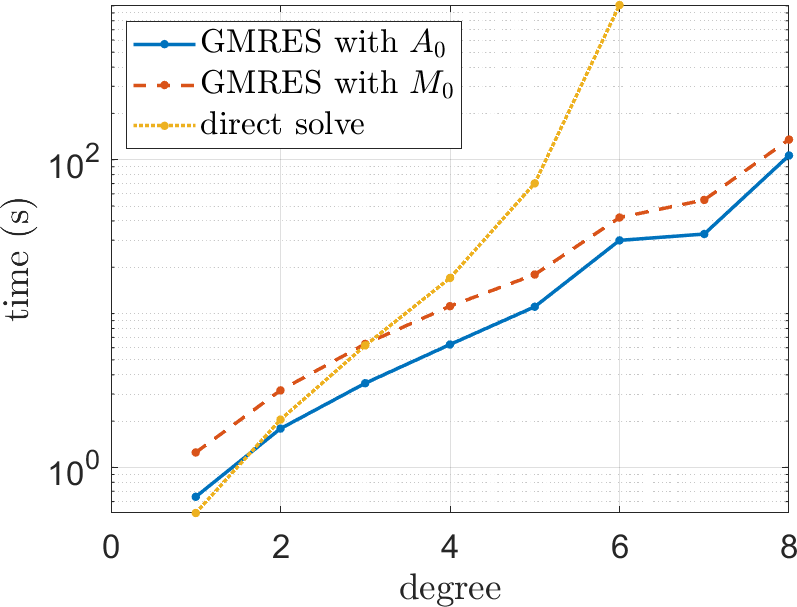}
\includegraphics[width=0.47\linewidth]{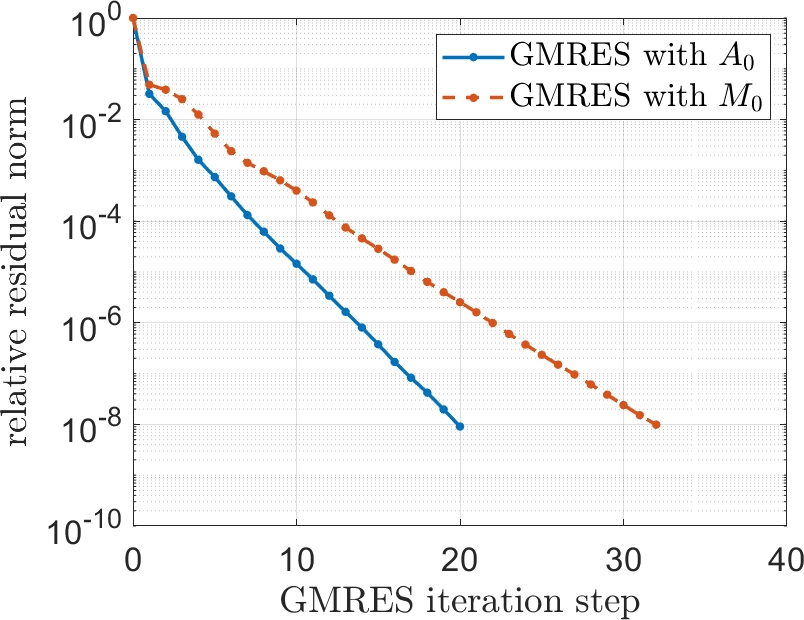}

}
\caption{Solving the linear system directly and with GMRES preconditioned 
by $A_0$ and $M_0$; see Section~\ref{sect:experiments2d}.
Left: Computation time (in seconds) as a function of the polynomial degree~$r$ 
in the stochastic Galerkin method.
Right: Relative residual norms in preconditioned GMRES for 
polynomial degree~$r = 8$.}
\label{fig:2d_dir_vs_gmres}
\end{figure}

Thus we solve the linear algebraic system $A x = b$ with GMRES using the 
mean value preconditioner $A_0 = I_{m+1} \otimes S_0$ from~\eqref{eqn:bar_A} 
as right preconditioner, that is, we solve
\begin{equation} \label{eqn:preconditioned_2d}
A A_0^{-1} y = b, \quad A_0 x = y.
\end{equation} 
Here $S_0$ denotes the FD discretization of the Helmholtz equation with 
absorbing boundary conditions and deterministic 
wavenumber~\eqref{eqn:wavenumber_k123}; see 
Theorem~\ref{thm:discretization_2d_abs}.
The solution of linear systems with the preconditioner $A_0$ is 
implemented as described in Section~\ref{sect:gmres_1d}.
We solve~\eqref{eqn:preconditioned_2d} with full GMRES (no restarts), 
\verb|tol=1e-8| and \verb|maxit=200| for polynomial degrees up to $r = 1, 
\ldots, 8$ in the stochastic Galerkin method.
In contrast to the experiments in 1D in Section~\ref{sect:experiments1d}, 
preconditioned GMRES is significantly faster than the 
direct solution with MATLAB's `backslash' command 
\verb|x=A\b|; see Figure~\ref{fig:2d_dir_vs_gmres} (left panel).
The computation times for preconditioned GMRES include the computation of the 
$LU$-decomposition of a diagonal block of $A_0$.
Furthermore, the relative residual norms in GMRES for polynomial degree 
$r = 8$ are shown in Figure~\ref{fig:2d_dir_vs_gmres} (right panel).
The mean value CSL preconditioner $M_0$ has a similar block-diagonal structure 
to $A_0$ and performs similarly well; see Figure~\ref{fig:2d_dir_vs_gmres}.

\begin{figure}
{\centering
\includegraphics[width=0.5\linewidth]{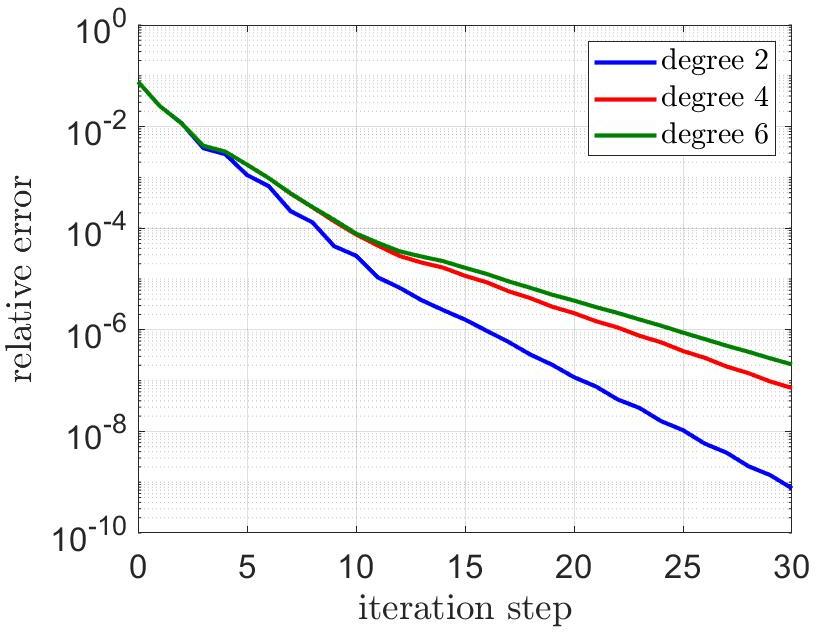}

}
\caption{Relative error norms in the stationary 
iteration~\eqref{eqn:stationary_iteration_A0} for different polynomial degrees 
$r$ in the stochastic Galerkin method.}
\label{fig:stationary_iteration}
\end{figure}

Alternatively to GMRES or a direct solution of the linear system, 
we also investigate the stationary iteration~\eqref{eqn:stationary_iteration} 
with $B = A_0$, i.e.,
\begin{equation} \label{eqn:stationary_iteration_A0}
A_0 x^{(i+1)} = b - (A-A_0) x^{(i)} \quad \text{for } i = 0,1,2, \ldots \, .
\end{equation}
We take the starting vector $x^{(0)} = A_0^{-1} b$. 
Linear systems with the matrix~$A_0$ are solved as described above.
For $\theta = 0.1$, this iteration converges.
Figure~\ref{fig:stationary_iteration} displays the relative error norms 
in the maximum-norm for polynomial degrees $r=2,4,6$, 
where we take the direct solution \verb|A\b| as the `exact' solution.
The slower convergence for larger degree~$r$ in the stochastic Galerkin method 
is expected, since the matrix size also grows causing higher condition numbers.
For $\theta = 0.2$, the stationary iteration diverges. 
This behavior is in agreement to Theorem~\ref{thm:matrix_asymptotic} 
and Corollary~\ref{cor:matrix_asymptotic}.

\begin{figure}[t!]
{\centering
\includegraphics[width=0.47\linewidth]
{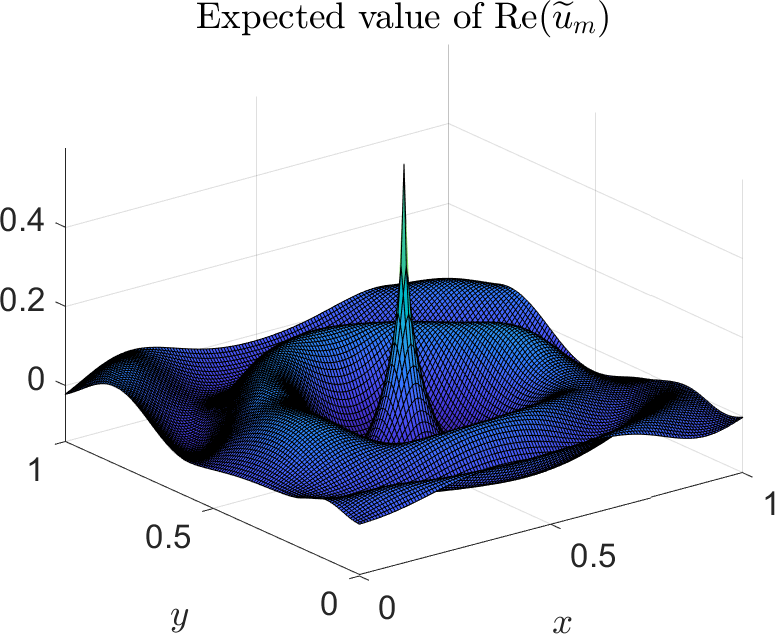}
\includegraphics[width=0.47\linewidth]
{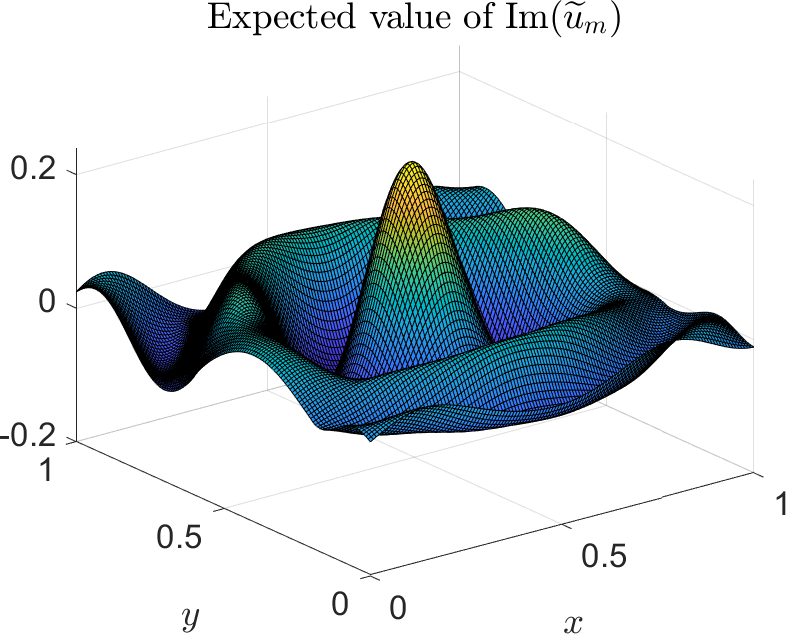}

\includegraphics[width=0.47\linewidth]
{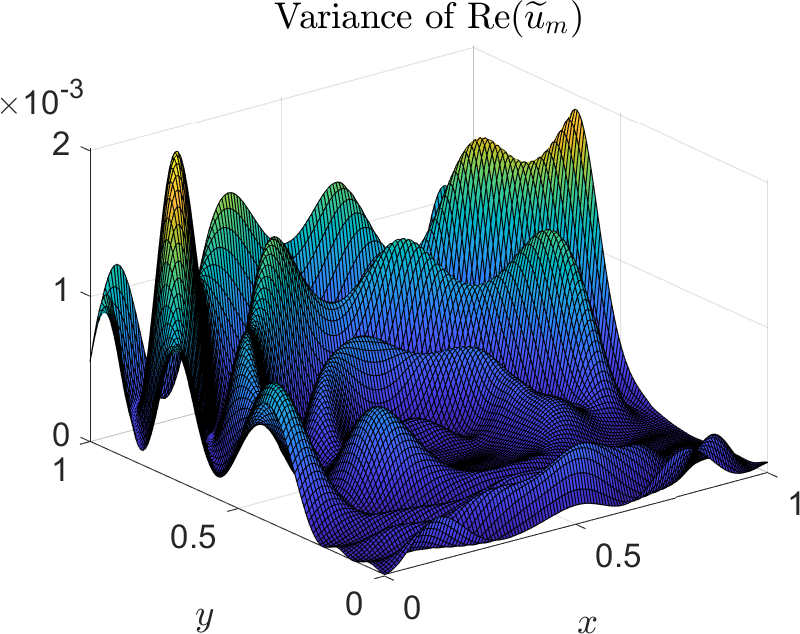}
\includegraphics[width=0.47\linewidth]
{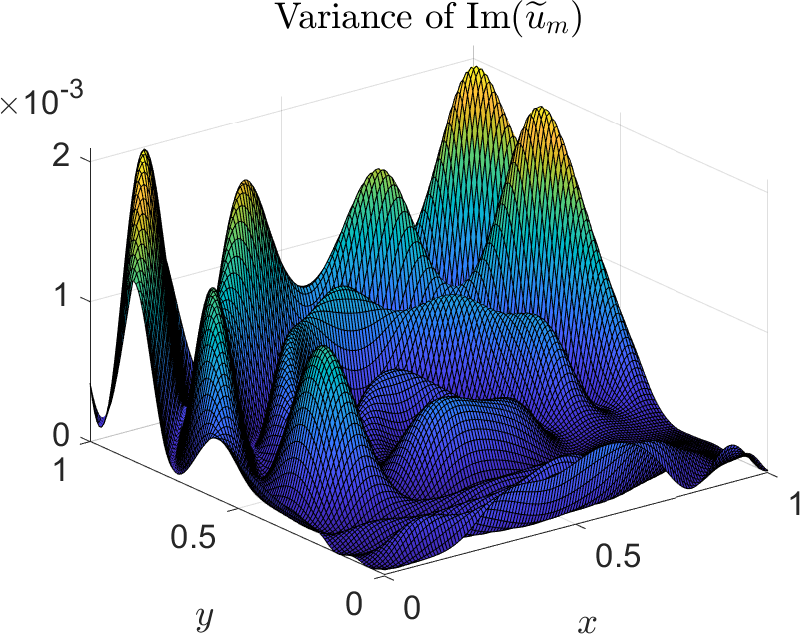}

}
\caption{Expected value (top) and variance (bottom) of $\re(\widetilde{u}_m)$ 
and $\im(\widetilde{u}_m)$.}
\label{fig:bE_Var}
\end{figure}

In Figure~\ref{fig:bE_Var}, the top row displays the expected value 
of the real and imaginary part of the computed stochastic Galerkin 
approximation $\widetilde{u}_m$ (with polynomial degree $r = 5$).
The variance is displayed in the bottom row of the figure.

\begin{figure}
{\centering
\includegraphics[width=0.47\linewidth]{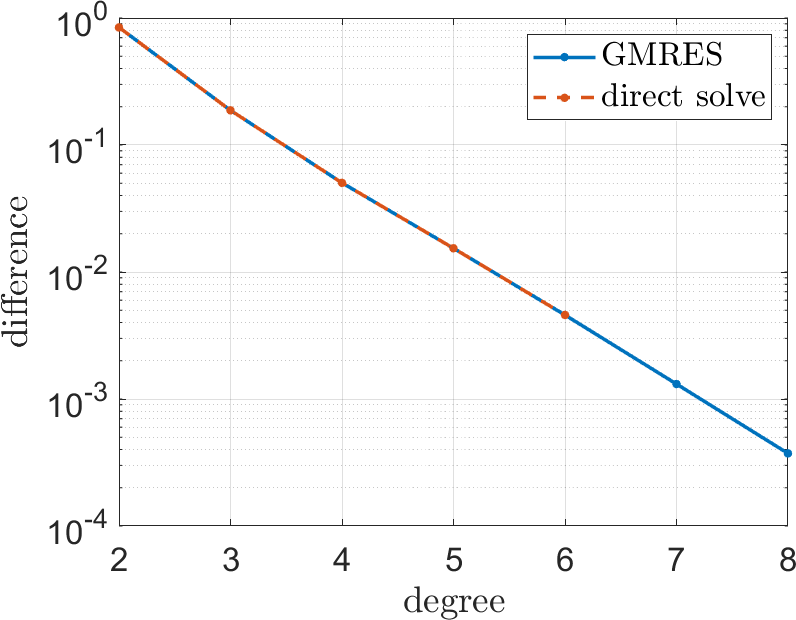}
\includegraphics[width=0.47\linewidth]{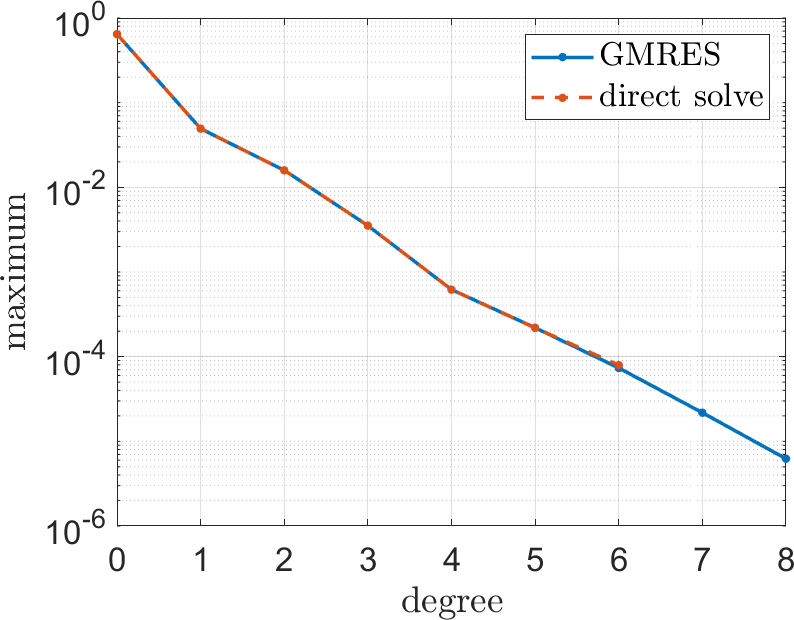}

}
\caption{Left: Euclidean norms $\norm{x_{r-1} - x_r}_2$ as a function of the 
polynomial degree $r = 2, \ldots, 8$, 
where $x_r$ is the solution of $A x = b$ using total degree~$r$ 
in the stochastic Galerkin method.
Right: Magnitudes~\eqref{eqn:magnitude_V} as a function of the degree~$j$.}
\label{fig:k123_convergence}
\end{figure}

Denote by $x_r$ the solution of $A x = b$ when using polynomials of degree up 
to $r$ in the stochastic Galerkin method, 
where the number of basis polynomials is given in~\eqref{eqn:number_p_ex}. 
The left panel of Figure~\ref{fig:k123_convergence} displays the differences 
$\norm{x_{r-1} - x_r}_2$ as a function of $r$ (the vector $x_{r-1}$ is padded 
with zeros at the end to match the size of $x_r$).
Their exponential decay suggests convergence of the stochastic Galerkin method.

Next, we fix the degree $r = 8$ in the stochastic Galerkin method.  Recall 
from~\eqref{eqn:Galerkin_approx} and~\eqref{eqn:LGS_from_FD_G} that the 
solution of $A x = b$ contains the coefficient vectors $V_0, \ldots, V_m$ of 
the polynomials $\phi_0, \ldots, \phi_m$ in the stochastic Galerkin method.
We also examine the largest maximum norm of the coefficients 
associated to polynomials of total degree (exactly)~$j$, 
i.e., the values
\begin{equation} \label{eqn:magnitude_V} 
\gamma_j = \max \left\{ \norm{V_i}_{\infty} : \, 
\deg(\phi_i) = j \right\} .
\end{equation}
The right panel of Figure~\ref{fig:k123_convergence} shows 
the magnitudes~\eqref{eqn:magnitude_V} for $j=0,1,\ldots,8$.
The observed exponential decay stems from the exponential convergence 
of~\eqref{eqn:pc}, since the wavenumber 
in~\eqref{eqn:stochastic_wavenumber_k123} is analytic in $\xi_1, \xi_2, \xi_3$.

We repeat this experiment with $\theta = 0.2$ instead of $\theta = 0.1$.
Overall, the behavior is similar as for $\theta = 0.1$, but convergence is 
slower:  the relative residual norms reach the prescribed tolerance in $60$ 
instead of $20$ iteration steps,
and also $\norm{x_{r-1} - x_r}_2$ as well as the 
magnitudes~\eqref{eqn:magnitude_V} converge more slowly.


\section{Conclusions}
\label{sect:conclusions}
We investigated the Helmholtz equation including a random wavenumber. 
The combination of a stochastic Galerkin method and a finite difference 
method yielded a high-dim\-en\-sional linear system of algebraic equations. 
We examined the iterative solution of these linear systems using 
three types of preconditioners: 
a complex shifted Laplace preconditioner, 
a mean value preconditioner, and 
a combined variant. 
Theoretical properties of the preconditioned linear systems were shown. 
Numerical computations demonstrate that the straightforward 
mean value preconditioner provides the most efficient iterative solution 
within these types.


\appendix
\section{Discretizations}
\label{sect:discretization}

\subsection{Finite differences and stochastic Galerkin method in 1D}

Consider the grid of equispaced points
\begin{equation} \label{eqn:grid_1d}
x_j = j h, \quad j = 0, 1, \ldots, q+1,
\end{equation}
in $\overline{Q} = \cc{0, 1}$ with mesh-size $h = 1/(q+1)$.
For brevity of notation, set
\begin{equation}
u_j(\xi) \coloneq u(x_j, \xi), \quad
k_j(\xi) \coloneq k(x_j, \xi), \quad
f_j \coloneq f(x_j), \quad j = 0, 1, \ldots, q+1.
\end{equation}
A finite difference discretization of the Helmholtz 
equation~\eqref{eqn:stochastic_helmholtz} using second order 
central differences yields
\begin{equation}
\frac{1}{h^2} \Bigl( -u_{j-1}(\xi) + 2 u_j(\xi) - u_{j+1}(\xi) \Bigr)
- k_j(\xi)^2 u_j(\xi) = f_j, \quad j = 1, \ldots, q.
\end{equation}
This discretization is consistent of order two.
Since $u_0(\xi) = u_{q+1}(\xi) = 0$ for all $\xi \in \Xi$ in the case of 
homogeneous Dirichlet boundary conditions, we obtain the following 
discretization.

\begin{theorem} \label{thm:discretization_1d_dir}
In the above notation, the Helmholtz equation~\eqref{eqn:stochastic_helmholtz} 
on $Q = \oo{0, 1}$ with homogeneous Dirichlet boundary conditions has the 
second order FD discretization
\begin{equation}
S(\xi) U(\xi) = F_0, \quad
U(\xi) = \begin{bmatrix} u_1(\xi) \\ \vdots \\ u_q(\xi) \end{bmatrix} \in \R^q, 
\quad
F_0 = \begin{bmatrix} f_1 \\ \vdots \\ f_q \end{bmatrix} \in \R^q,
\end{equation}
with the matrix
\begin{equation} \label{eqn:1d_dir_S}
S(\xi) = T - D(\xi) \in \R^{q,q},
\end{equation}
where $T$ is the discretization of the Dirichlet Laplacian,
\begin{align}
T &= \frac{1}{h^2}
\begin{bmatrix}
2 & -1 \\
-1 & 2 & -1 \\
& \ddots & \ddots & \ddots \\
& & -1 & 2 & -1 \\
& & & -1 & 2
\end{bmatrix}, \label{eqn:1d_dir_T} \\
D(\xi) &= \diag \big( k_1(\xi)^2, \ldots, k_q(\xi)^2 \big).
\end{align}
The matrices $T$ and $D(\xi)$ are symmetric positive definite.

Moreover, the coefficient vectors of the stochastic Galerkin 
approximation~\eqref{eqn:Galerkin_approx} are solutions of the linear algebraic 
system
\begin{equation}
A V = \begin{bmatrix} F_0 \\ \vdots \\ F_m \end{bmatrix}, \quad
V = \begin{bmatrix} V_0 \\ \vdots \\ V_m \end{bmatrix},
\end{equation}
where $F_i = 0 \in \R^q$ for $i = 1, \ldots, m$, and
\begin{equation} \label{eqn:1d_dir_A}
A = I_{m+1} \otimes T - \begin{bmatrix} C_{ij} \end{bmatrix} \in \R^{(m+1) q, 
(m+1) q},
\end{equation}
where
\begin{equation} \label{eqn:1d_dir_Cij}
C_{ij} \coloneq \sprod{D(\xi) \phi_j(\xi), \phi_i(\xi)}
= \diag \Big( \sprod{k_1(\xi)^2 \phi_j(\xi), \phi_i(\xi)}, 
\ldots, \sprod{k_q(\xi)^2 \phi_j(\xi), \phi_i(\xi)} \Big)
\end{equation}
for $i, j = 0, \ldots, m$.
The matrices $I_{m+1} \otimes T$ and $\begin{bmatrix} C_{ij} \end{bmatrix}$ are 
symmetric positive definite.
\end{theorem}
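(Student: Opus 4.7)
The plan is to assemble the theorem in two stages: first verify the deterministic (in $\xi$) statement about $S(\xi)=T-D(\xi)$, then apply the stochastic Galerkin machinery from Lemma~\ref{lem:galerkin_projection} and Corollary~\ref{cor:constant_galerkin_projection} term by term. The derivation of $S(\xi)U(\xi)=F_0$ is essentially complete already: the second-order central differences have been written out just above the statement, and substituting the homogeneous Dirichlet conditions $u_0(\xi)=u_{q+1}(\xi)=0$ eliminates the boundary values from the equations for $j=1,\ldots,q$. Collecting the stencil coefficients into the matrix $T$ in~\eqref{eqn:1d_dir_T} and the wavenumber terms into the diagonal matrix $D(\xi)$ yields~\eqref{eqn:1d_dir_S}.

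Next I would check that $T$ and $D(\xi)$ are symmetric positive definite. Symmetry is immediate from their displayed forms. For $D(\xi)$, positive definiteness follows from the assumption $k(x,\xi)>0$, which makes all diagonal entries $k_j(\xi)^2$ strictly positive. For $T$, I would note that it is the standard discrete negative Dirichlet Laplacian on an equispaced grid, so its eigenvalues are $\frac{4}{h^2}\sin^2\!\bigl(\tfrac{j\pi h}{2}\bigr)$ for $j=1,\ldots,q$, all strictly positive; an equivalent and perhaps quicker argument is that $T=\frac{1}{h^2}G^\top G$ with $G\in\R^{q+1,q}$ a bidiagonal forward-difference matrix of full column rank, so $T$ is SPD.

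For the Galerkin part, I would apply Lemma~\ref{lem:galerkin_projection} separately to the two summands of $S(\xi)$. Since $T$ is independent of $\xi$, Corollary~\ref{cor:constant_galerkin_projection} gives the block-diagonal projection $I_{m+1}\otimes T$. For the $\xi$-dependent summand $-D(\xi)$, Lemma~\ref{lem:galerkin_projection} produces the block matrix $[C_{ij}]$ with $C_{ij}=\sprod{D(\xi)\phi_j(\xi),\phi_i(\xi)}$. Because $D(\xi)$ is diagonal, the inner product is taken entry-wise on the diagonal, so $C_{ij}=\diag\bigl(\sprod{k_\ell(\xi)^2\phi_j,\phi_i}\bigr)_{\ell=1}^{q}$, matching~\eqref{eqn:1d_dir_Cij}. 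The system~\eqref{eqn:1d_dir_A} is then obtained by assembling the two Galerkin projections, and the right-hand side reduces to $\delta_{i,0}F_0$ exactly as in~\eqref{eqn:Galerkin_orthogonality} because $\phi_0\equiv 1$ and $F_0$ is $\xi$-independent.

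Finally, the SPD claims for the blocks follow from the same lemma and corollary. The Kronecker product $I_{m+1}\otimes T$ is SPD as a block-diagonal matrix with identical SPD diagonal blocks. For $[C_{ij}]$, Lemma~\ref{lem:galerkin_projection} preserves symmetric positive definiteness: $D(\xi)$ is SPD for almost all $\xi\in\Xi$, hence so is its stochastic Galerkin projection. I do not anticipate a genuine obstacle here; the main thing to be careful with is the bookkeeping when restricting the general Lemma~\ref{lem:galerkin_projection} to the diagonal matrix $D(\xi)$ so as to recover the entrywise formula~\eqref{eqn:1d_dir_Cij}, and verifying that the constant summand $T$ is indeed handled by Corollary~\ref{cor:constant_galerkin_projection} and not by the generic Galerkin lemma.
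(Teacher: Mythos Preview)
Your proposal is correct and follows essentially the same route as the paper: apply Lemma~\ref{lem:galerkin_projection} and Corollary~\ref{cor:constant_galerkin_projection} term by term to $S(\xi)=T-D(\xi)$, and read off both the block structure of $A$ and the SPD properties. The only cosmetic difference is that the paper outsources the positive definiteness of $T$ to a textbook reference, whereas you supply the explicit eigenvalues (or the factorization $T=\tfrac{1}{h^2}G^\top G$); either argument is fine.
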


\begin{proof}
The matrix $T$ is symmetric positive definite 
by~\cite[Lem.~6.1]{GriffithsDoldSilvester2015} and $D(\xi)$ is symmetric 
positive definite since $k(x, \xi) > 0$ for all $x$ and $\xi$ by assumption.

The coefficient vectors $V_0, \ldots, V_m$ are determined from the 
orthogonality condition~\eqref{eqn:Galerkin_orthogonality}, i.e.,
$\sprod{S(\xi) \widetilde{U}_m(\xi), \phi_i(\xi)} = \delta_{i,0} F_0$, $i = 0, 
1, \ldots, m$.
Inserting $\widetilde{U}_m(\xi)$ from~\eqref{eqn:Galerkin_approx} and $S(\xi)$ 
from~\eqref{eqn:1d_dir_S} in the left hand side yields
\begin{align}
\sprod{S(\xi) \widetilde{U}_m(\xi), \phi_i(\xi)}
&= \sum_{j=0}^m \sprod{\phi_j(\xi) T V_j, \phi_i(\xi)} - 
\sum_{j=0}^m \sprod{\phi_j(\xi) D(\xi) V_j, \phi_i(\xi)} \\
&= T V_i - \sum_{j=0}^m C_{ij} V_j,
\end{align}
see Lemma~\ref{lem:galerkin_projection} and 
Corollary~\ref{cor:constant_galerkin_projection}, which shows that $A$ has the 
form~\eqref{eqn:1d_dir_A}.  Moreover, since $T$ and $D(\xi)$ are symmetric 
positive definite, also $I_{m+1} \otimes T$ and $\begin{bmatrix} C_{ij} 
\end{bmatrix}$ are symmetric positive definite by 
Lemma~\ref{lem:galerkin_projection}.
\end{proof}

The absorbing boundary conditions for $Q = \oo{0, 1}$ are
\begin{equation} \label{eqn:bc_absorbing}
-u'(0, \xi) - \ii k(0,\xi) u(0, \xi) = 0, \quad u'(1, \xi) - \ii k(1, \xi) u(1, 
\xi) = 0 \quad \text{for } \xi \in \Xi.
\end{equation}
We obtain a second order approximation of $u'$ in the boundary points as 
described in~\cite[Sect.~6.4.1]{GriffithsDoldSilvester2015}.
A Taylor expansion in $x_0 = 0$ yields
\begin{align}
u(x_1, \xi)
&= u(x_0, \xi) + h u'(x_0, \xi) + \frac{h^2}{2} u''(x_0, \xi) + O(h^3) \\
&= u(x_0, \xi) + h u'(x_0, \xi) - \frac{h^2}{2} \Big( k(x_0, \xi) u(x_0, \xi) + 
f(x_0) \Big) + O(h^3), \label{eqn:FD_first_derivative_bdry}
\end{align}
where we replaced $u''$ using the Helmholtz equation. This yields a second 
order approximation of $u'(x_0, \xi)$.  Inserting it 
in~\eqref{eqn:bc_absorbing} and dividing by $h$ yields the discretization
\begin{equation}
\frac{u_0(\xi) - u_1(\xi)}{h^2} - \frac{\ii}{h} k_0(\xi) u_0(\xi)
- \frac{k_0(\xi)^2}{2} u_0(\xi) = \frac{f_0}{2}.
\end{equation}
Similarly, the absorbing boundary condition in $x_{q+1} = 1$ is discretized by
\begin{equation}
\frac{-u_q(\xi) + u_{q+1}(\xi)}{h^2} - \frac{\ii}{h} k_{q+1}(\xi) 
u_{q+1}(\xi) - \frac{k_{q+1}(\xi)^2}{2} u_{q+1}(\xi) = \frac{f_{q+1}}{2}.
\end{equation}
The approximation is consistent of order two.
It leads to the following discretization.

\begin{theorem} \label{thm:discretization_1d_abs}
In the above notation, the Helmholtz equation~\eqref{eqn:stochastic_helmholtz} 
on $Q = \oo{0, 1}$ with absorbing boundary conditions has the 
second order FD discretization
\begin{equation}
S(\xi) U(\xi) = F_0, \quad
U(\xi) = \begin{bmatrix} u_0(\xi) \\ u_1(\xi) \\ \vdots \\ u_q(\xi) \\ 
u_{q+1}(\xi) \end{bmatrix} \in \R^{q+2}, \quad
F_0 = \begin{bmatrix} f_0/2 \\ f_1 \\ \vdots \\ f_q \\ 
f_{q+1}/2 \end{bmatrix} \in \R^{q+2},
\end{equation}
with the matrix
\begin{equation} \label{eqn:1d_abs_S}
S(\xi) = T - \ii D_1(\xi) - D_2(\xi) \in \C^{q+2, q+2}
\end{equation}
and real
\begin{align} 
T &= \frac{1}{h^2}
\begin{bmatrix}
1 & -1 \\
-1 & 2 & -1 \\
& \ddots & \ddots & \ddots \\
& & -1 & 2 & -1 \\
& & & -1 & 1
\end{bmatrix}, \label{eqn:1d_abs_T} \\
D_1(\xi) &= \frac{1}{h} \diag \Big( k_0(\xi), 0, \ldots, 0, k_{q+1}(\xi) \Big), 
\\
D_2(\xi) &= \diag \bigg( \frac{k_0(\xi)^2}{2}, k_1(\xi)^2, \ldots, k_q(\xi)^2, 
\frac{k_{q+1}(\xi)^2}{2} \bigg).
\end{align}
The matrices $T, D_1(\xi)$ are symmetric positive semidefinite (for all $\xi 
\in \Xi$), and $\ker(T) = \spann \{ [1, \ldots, 1]^\top \}$.
The matrix $D_2(\xi)$ is symmetric positive definite for all $\xi \in \Xi$.

Moreover, the coefficient vectors of the stochastic Galerkin 
approximation~\eqref{eqn:Galerkin_approx} are solutions of the linear algebraic 
system
\begin{equation} \label{eqn:1d_abs_A}
A V = \begin{bmatrix} F_0 \\ \vdots \\ F_m \end{bmatrix}, \quad V = 
\begin{bmatrix} V_0 \\ \vdots \\ V_m \end{bmatrix},
\end{equation}
where $F_i = 0 \in \R^{q+2}$ for $i = 1, \ldots, m$, and
\begin{equation}
A = I_{m+1} \otimes T - \ii [B_{ij}] - [C_{ij}] \in \C^{(m+1) (q+2), (m+1) 
(q+2)}
\end{equation}
with
\begin{align} \label{eqn:1d_abs_Bij}
B_{ij} &\coloneq
\frac{1}{h} \diag \Bigl( \sprod{k_0 \phi_j, \phi_i}, 0, \ldots, 0, 
\sprod{k_{q+1} \phi_j, \phi_i} \Bigr), \\
C_{ij} &\coloneq \diag \Bigl(
\frac{1}{2} \sprod{k_0^2 \phi_j, \phi_i}, \sprod{k_1^2 \phi_j, \phi_i},
\ldots, \sprod{k_q^2 \phi_j, \phi_i},
\frac{1}{2} \sprod{k_{q+1}^2 \phi_j, \phi_i} \Bigr)
\end{align}
for $i, j = 0, \ldots, m$.  Note that $B_{ij}, C_{ij} \in \R^{q+2, q+2}$.
The matrices $I_{m+1} \otimes T$ and $\begin{bmatrix} B_{ij} \end{bmatrix}$ are 
symmetric positive semidefinite, the matrix $\begin{bmatrix} C_{ij} 
\end{bmatrix}$ is symmetric positive definite.
\end{theorem}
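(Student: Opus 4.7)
The plan is to proceed in two clearly separated stages: first assemble the FD discretization $S(\xi) U(\xi)=F_0$ from the interior scheme and the two boundary equations that were already derived just above the theorem, and then apply the stochastic Galerkin method to obtain the block system with matrix $A$.

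For the first stage, I would organize the $q+2$ equations as follows. The interior equations for $j=1,\ldots,q$ are already in central-difference form and contribute the rows with entries $-1/h^2,\, 2/h^2,\, -1/h^2$ to $T$, together with $-k_j(\xi)^2$ on the diagonal of $D_2(\xi)$. The two boundary equations, derived via the Taylor expansion~\eqref{eqn:FD_first_derivative_bdry} and the corresponding expansion at $x_{q+1}=1$, provide the first and last rows of $S(\xi)$ after dividing by $h$: they give entries $1/h^2,\,-1/h^2$ in $T$, the diagonal entries $k_0(\xi)/h$ and $k_{q+1}(\xi)/h$ in $D_1(\xi)$, the corner entries $k_0(\xi)^2/2$ and $k_{q+1}(\xi)^2/2$ in $D_2(\xi)$, and the right-hand side entries $f_0/2$ and $f_{q+1}/2$. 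Reading off each term matrix-by-matrix yields the decomposition~\eqref{eqn:1d_abs_S}.

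Next I would verify the spectral claims for $T,D_1(\xi),D_2(\xi)$. Symmetry of all three is visible from the expressions. For $T$, I would compute
\begin{equation*}
x^\top T x = \frac{1}{h^2}\sum_{j=0}^{q}(x_{j+1}-x_j)^2,
\end{equation*}
which is manifestly $\geq 0$ and vanishes iff $x_0=x_1=\cdots=x_{q+1}$, giving both positive semidefiniteness and $\ker(T)=\spann\{[1,\ldots,1]^\top\}$. For $D_1(\xi)$, non-negativity of the diagonal follows from $k_0(\xi),k_{q+1}(\xi)\geq 0$; for $D_2(\xi)$, strict positivity of the diagonal follows from $k_j(\xi)>0$.

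For the second stage I would apply the stochastic Galerkin orthogonality condition $\sprod{S(\xi)\widetilde U_m(\xi),\phi_i(\xi)}=\delta_{i,0}F_0$ exactly as in the proof of Theorem~\ref{thm:discretization_1d_dir}. Expanding $\widetilde U_m(\xi)=\sum_j \phi_j(\xi)V_j$ and using the three-term decomposition of $S(\xi)$, Corollary~\ref{cor:constant_galerkin_projection} handles the constant part $T$ and produces $I_{m+1}\otimes T$, while Lemma~\ref{lem:galerkin_projection} handles the $\xi$-dependent diagonal matrices $D_1(\xi)$ and $D_2(\xi)$ entry by entry. The latter yields the block-diagonal entries $\sprod{k_\ell(\xi)\phi_j,\phi_i}$ and $\sprod{k_\ell(\xi)^2\phi_j,\phi_i}$ that define $B_{ij}$ and $C_{ij}$. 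The stated symmetry and semidefiniteness of $I_{m+1}\otimes T$ and $[B_{ij}]$, and positive definiteness of $[C_{ij}]$, then follow from the corresponding properties of $T,D_1(\xi),D_2(\xi)$ via the preservation statement in Lemma~\ref{lem:galerkin_projection}.

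The routine part will be the bookkeeping of the three diagonal matrices and their block Galerkin projections; the only genuinely substantive point is identifying $\ker(T)$, which is immediate once $x^\top T x$ is written as a sum of squared finite differences. No result beyond Lemma~\ref{lem:galerkin_projection}, Corollary~\ref{cor:constant_galerkin_projection} and the derivation of the boundary stencil in~\eqref{eqn:FD_first_derivative_bdry} is needed.
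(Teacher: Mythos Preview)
Your proposal is correct and, except for one step, follows the paper's own argument (assemble $S(\xi)$ from the interior and boundary stencils already derived, then push the decomposition through the stochastic Galerkin projection via Lemma~\ref{lem:galerkin_projection} and Corollary~\ref{cor:constant_galerkin_projection}, exactly as in the proof of Theorem~\ref{thm:discretization_1d_dir}).

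The one genuine difference is how you establish that $T$ is symmetric positive semidefinite with $\ker(T)=\spann\{[1,\ldots,1]^\top\}$. The paper treats $-h^2 T$ as a Jacobi matrix and uses a Sturm sequence argument on the leading principal minors: one checks $\det(h^2 T_j)=1$ for $j=1,\ldots,q+1$ and $\det(h^2 T_{q+2})=0$, from which the sign pattern of $\det(-T_j)$ shows $-h^2 T$ has exactly one nonnegative eigenvalue, namely $0$. Your route, writing $x^\top T x = h^{-2}\sum_{j=0}^{q}(x_{j+1}-x_j)^2$, is more elementary and gives semidefiniteness and the kernel in one stroke; it also makes transparent why $T$ is the natural ``Neumann Laplacian'' stencil. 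The Sturm sequence argument, on the other hand, yields the slightly stronger information that the zero eigenvalue is simple as an eigenvalue (not just that the kernel is one-dimensional, which here is the same thing since $T$ is symmetric) and would generalize to tridiagonal matrices that are not of difference form. For the purposes of this theorem either argument suffices.
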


\begin{proof}
The form of $S(\xi)$ in~\eqref{eqn:1d_abs_S} follows from the finite difference 
discretization described above.
To show that $T$ is symmetric positive definite, 
we make use of the Sturm sequence property of $-h^2 T$, which is a Jacobi 
matrix (real, symmetric, tridiagonal, with positive off-diagonal elements).
For $j = 1, \ldots, q+2$, denote by $T_j$ the upper left $j \times j$ block of 
$h^2 T$.  Then $\det(T_1) = 1$, $\det(T_2) = 1$ and, by induction, $\det(T_j) = 
2 \det(T_{j-1}) - \det(T_{j-2})$ for $j = 3, \ldots, q+1$, which shows 
$\det(T_j) = 1$ for $j = 1, \ldots, q+1$.  Finally, $\det(T_{q+2}) = 
\det(T_{q+1}) - \det(T_q) = 0$.
Thus, the determinants $\det(-T_j) = (-1)^j$ alternate for $j = 1, \ldots, 
q+1$ while $\det(-T_{q+2}) = 0$, so that there is only one ``agreement in 
sign'' from the final $0$ in the Sturm sequence for $-h^2 T$, showing that $- 
h^2 T$ has exactly one nonnegative eigenvalue, namely $0$.  Therefore, $T$ has 
one eigenvalue $0$ and all other eigenvalues of $T$ are positive.
The rest is very similar to the proof of 
Theorem~\ref{thm:discretization_1d_dir}.
\end{proof}

\begin{lemma} \label{lem:discretization_1d_abs_bandwith}
In the notation of Theorem~\ref{thm:discretization_1d_abs}, if $\xi$ is a 
random variable that is uniformly distributed in $\cc{-1, 1}$
and if $k(x, \xi)$ is a polynomial in $\xi$ of degree at most $n$ for all $x 
\in Q$, then $B_{ij} = 0$ for $\abs{i - j} > n$ and $C_{ij} = 0$ for $\abs{i - 
j} > 2n$.
\end{lemma}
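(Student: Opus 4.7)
My plan rests on the fact that, for the uniform distribution on $[-1,1]$, the orthonormal basis $(\phi_i)_{i\in\N_0}$ from Section~\ref{sect:PCexpansion} consists of (normalized) Legendre polynomials, so $\phi_i$ has exact degree $i$ as a polynomial in $\xi$. I will exploit this to reduce the bandwidth claim to a simple degree count against the orthogonality relations of the basis.

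First I would fix $x$ and a fixed boundary index $\ell \in \{0, q+1\}$, and consider the scalar inner product $\sprod{k_\ell(\xi) \phi_j(\xi), \phi_i(\xi)}$. Since $k(x,\xi)$ is a polynomial in $\xi$ of degree at most $n$, the product $k_\ell(\xi)\phi_j(\xi)$ is a polynomial in $\xi$ of degree at most $j+n$. Expanding this product in the Legendre basis gives $k_\ell(\xi)\phi_j(\xi) = \sum_{r=0}^{j+n} \alpha_r \phi_r(\xi)$ for some coefficients $\alpha_r \in \R$. Taking the inner product with $\phi_i$ and using orthonormality of $(\phi_r)$ yields $\sprod{k_\ell \phi_j, \phi_i} = \alpha_i$, which is $0$ whenever $i > j+n$. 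The same argument with the roles of $i$ and $j$ swapped (permissible because the inner product is symmetric on real-valued functions) gives $\sprod{k_\ell \phi_j, \phi_i} = 0$ whenever $j > i + n$. Combining both, this scalar inner product vanishes as soon as $\abs{i-j} > n$. Since $B_{ij}$ from~\eqref{eqn:1d_abs_Bij} is a diagonal matrix whose only nonzero entries are scalar multiples of $\sprod{k_\ell \phi_j, \phi_i}$ for $\ell \in \{0, q+1\}$, this proves $B_{ij} = 0$ for $\abs{i-j}>n$.

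For $C_{ij}$ I would apply the identical argument to the function $k(x,\xi)^2$, which is a polynomial in $\xi$ of degree at most $2n$ for each fixed $x$. The product $k_\ell(\xi)^2 \phi_j(\xi)$ then has degree at most $j + 2n$ in $\xi$, so the Legendre expansion argument shows $\sprod{k_\ell^2 \phi_j, \phi_i} = 0$ whenever $i > j + 2n$, and by symmetry whenever $j > i + 2n$. Again, $C_{ij}$ is a diagonal matrix whose entries are scalar multiples of such inner products, which proves $C_{ij} = 0$ for $\abs{i-j} > 2n$.

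There is no real obstacle here since the argument is essentially a degree count combined with orthogonality; the only point requiring a small amount of care is justifying that $\phi_i$ has exact degree $i$ in the uniform case (this is a standard property of Legendre polynomials) and noting that the inner product is symmetric because all functions involved are real-valued, which lets me obtain the $|i-j| \le n$ and $|i-j| \le 2n$ bounds cleanly from a one-sided degree count.
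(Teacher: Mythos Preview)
Your proposal is correct and follows essentially the same approach as the paper's proof: both argue by a degree count (the product $k\phi_j$ has degree at most $j+n$, hence is orthogonal to $\phi_i$ for $i>j+n$) and then invoke symmetry of the real inner product to obtain the other inequality, with the $C_{ij}$ case handled identically using $k^2$ of degree at most $2n$. The only cosmetic difference is that the paper phrases the orthogonality step as $\sprod{p,\phi_j}=0$ for $\deg(p)<j$ rather than via an explicit basis expansion, but this is the same fact.
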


\begin{proof}
The proof relies on the fact of orthogonal polynomials, that $\sprod{p, \phi_j} 
= 0$ for all polynomials with $\deg(p) < j$.
If $\deg_\xi(k(x,\xi)) \leq n$, then $\sprod{k(x, \xi) \phi_i(\xi), \phi_j} = 
0$ for $i + n < j$, i.e., $i - j < -n$.  Since $k$ is real, we also have
$\sprod{k(x, \xi) \phi_i(\xi), \phi_j} = \sprod{\phi_i(\xi), k(x, \xi) \phi_j} 
=  0$ for $n + j < i$, i.e., $i - j > n$.  This shows that $B_{ij} = 0$ for 
$\abs{i - j} > n$.
Similarly $C_{ij} = 0$ for $\abs{i - j} > 2n$ since $\deg_\xi(k(x, \xi)^2) = 
2n$.
\end{proof}

If the wavenumber is constant in space, the matrices $[B_{ij}]$ and $[C_{ij}]$ 
simplify, as indicated in the next lemma.

\begin{lemma} \label{lem:discretization_1d_abs_k_xi}
In the notation of Theorem~\ref{thm:discretization_1d_abs}, if the wavenumber 
is constant in space, i.e., $k(x, \xi) = k(\xi)$, then
\begin{align}
B_{ij} &= \sprod{k(\xi) \phi_j(\xi), \phi_i(\xi)} D_1, \quad 
D_1 = \frac{1}{h} \diag \big( 1, 0, \ldots, 0, 1 \big), \\
C_{ij} &= \sprod{k(\xi)^2 \phi_j(\xi), \phi_i(\xi)} D_2, \quad D_2 = \diag 
\Bigr( \frac{1}{2}, 1, \ldots, 1, \frac{1}{2} \Bigr),
\end{align}
so that
\begin{equation}
[B_{ij}] = [\sprod{k(\xi) \phi_j(\xi), \phi_i(\xi)}]_{ij} \otimes D_1, \quad
[C_{ij}] = [\sprod{k(\xi)^2 \phi_j(\xi), \phi_i(\xi)}]_{ij} \otimes D_2.
\end{equation}
\end{lemma}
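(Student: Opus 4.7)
The plan is to unwind the definitions in Theorem~\ref{thm:discretization_1d_abs} under the extra hypothesis that $k$ is constant in space, and then recognize the resulting block structure as a Kronecker product. The work is essentially bookkeeping, so I do not expect a real obstacle; the only thing to be careful about is lining up the entry-wise formulas with the convention used for the Kronecker product.

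First I would recall from Theorem~\ref{thm:discretization_1d_abs} that
\[
B_{ij} = \frac{1}{h} \diag\bigl( \sprod{k_0 \phi_j, \phi_i},\, 0,\, \ldots,\, 0,\, \sprod{k_{q+1} \phi_j, \phi_i} \bigr),
\]
where $k_\ell(\xi) = k(x_\ell, \xi)$. Under the assumption $k(x,\xi) = k(\xi)$, every $k_\ell(\xi)$ coincides with $k(\xi)$, so all inner products appearing in $B_{ij}$ reduce to the single number $\sprod{k(\xi) \phi_j(\xi), \phi_i(\xi)}$. Factoring this scalar out of the diagonal matrix yields $B_{ij} = \sprod{k(\xi) \phi_j(\xi), \phi_i(\xi)} \, D_1$ with $D_1 = \tfrac{1}{h} \diag(1, 0, \ldots, 0, 1)$, exactly as claimed.

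The argument for $C_{ij}$ is completely analogous: under the spatial constancy of $k$, each $k_\ell(\xi)^2$ equals $k(\xi)^2$, so every entry on the diagonal of $C_{ij}$ shares the common scalar factor $\sprod{k(\xi)^2 \phi_j(\xi), \phi_i(\xi)}$, leaving the diagonal matrix $D_2 = \diag(\tfrac12, 1, \ldots, 1, \tfrac12)$ as the remaining factor.

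Finally, for the Kronecker product identities I would simply invoke the definition: for any scalars $\alpha_{ij}$ and any fixed matrix $D$, the block matrix whose $(i,j)$ block is $\alpha_{ij} D$ equals $[\alpha_{ij}]_{ij} \otimes D$. Applying this with $\alpha_{ij} = \sprod{k(\xi) \phi_j, \phi_i}$ and $D = D_1$ gives the first Kronecker formula, and with $\alpha_{ij} = \sprod{k(\xi)^2 \phi_j, \phi_i}$ and $D = D_2$ the second. This completes the proof.
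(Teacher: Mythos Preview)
Your argument is correct and is exactly the natural one: specialize the diagonal entries of $B_{ij}$ and $C_{ij}$ from Theorem~\ref{thm:discretization_1d_abs} using $k_\ell(\xi)=k(\xi)$, factor out the common scalar, and then recognize the block matrix with $(i,j)$ block $\alpha_{ij}D$ as $[\alpha_{ij}]\otimes D$. The paper in fact states this lemma without proof, treating it as an immediate consequence of the definitions, so your write-up simply makes explicit what the paper leaves implicit.
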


Lemma~\ref{lem:discretization_1d_abs_bandwith} and
Lemma~\ref{lem:discretization_1d_abs_k_xi} also hold in the setting of 
Theorem~\ref{thm:discretization_1d_dir} with the obvious modifications.

\subsection{Finite differences and stochastic Galerkin method in 2D}

We discretize the stochastic Helmholtz 
equation~\eqref{eqn:stochastic_helmholtz} 
on $Q = \oo{0, 1}^2$.
Let $q \in \N$.  We discretize $\cc{0, 1}^2$ by the grid
\begin{equation} \label{eqn:grid_2d}
(x_i, y_j) = (i h, j h), \quad i, j = 0, 1, \ldots, q+1,
\end{equation}
with mesh-size $h = 1/(q+1)$.
For brevity of notation, set
\begin{equation}
u_{i,j} = u(x_i, y_j, \xi), \quad
k_{i,j} = k(x_i, y_j, \xi), \quad
f_{i,j} = f(x_i, y_j), \quad i, j = 0, 1, \ldots, q+1.
\end{equation}
Discretizing the Laplacian with the $5$-point stencil leads to
\begin{equation} \label{eqn:de_discretized}
\frac{1}{h^2} (- u_{i-1, j} + 2 u_{i,j} - u_{i+1, j} - u_{i, j-1} + 
2 u_{i,j} - u_{i, j+1}) - k_{i,j}^2 u_{i,j} = f_{i,j}
\end{equation}
for $i, j = 1, \ldots, N$.
This discretization is consistent of order two.

\begin{theorem} \label{thm:discretization_2d_dir}
In the above notation, the stochastic Helmholtz 
equation~\eqref{eqn:stochastic_helmholtz} on $Q = \oo{0, 1}^2$ with homogeneous 
Dirichlet boundary conditions has the second order FD discretization
\begin{equation}
S(\xi) U(\xi) = b_0,
\end{equation}
where the function values are ordered as
\begin{align}
U(\xi) &= [u_{1,1}, u_{2,1}, \ldots, u_{q,1}, u_{1,2}, \ldots, u_{q,2}, \ldots, 
u_{1,q}, \ldots, u_{q,q}]^\top \in \R^{q^2}, \quad \xi \in \Xi, \\
b_0 &= [f_{1,1}, f_{2,1}, \ldots, f_{q,1}, f_{1,2}, \ldots, f_{q,2}, \ldots, 
f_{1,q}, \ldots, f_{q,q}]^\top \in \R^{q^2},
\end{align}
the matrix is given by
\begin{equation}
S(\xi) = L - D_2(\xi) \in \R^{q^2, q^2},
\end{equation}
and
\begin{align}
L &= I_q \otimes T + T \otimes I_q, \\
D_2(\xi) &= \diag(k_{1,1}^2, \ldots, k_{q,1}^2, k_{1,2}^2, \ldots, k_{q,2}^2, 
\ldots, k_{1,q}^2, \ldots, k_{q,q}^2)
\end{align}
with $T$ from~\eqref{eqn:1d_dir_T}.
The matrices $L$ and $D_2(\xi)$ are symmetric positive definite.

Moreover, the coefficient vectors of the stochastic Galerkin 
approximation~\eqref{eqn:Galerkin_approx} are solutions of the linear algebraic 
system
\begin{equation}
A V = b, \quad
V = \begin{bmatrix} V_0 \\ \vdots \\ V_m \end{bmatrix}, \quad
b = \begin{bmatrix} b_0 \\ \vdots \\ b_m \end{bmatrix},
\end{equation}
where $b_i = 0 \in \R^{q^2}$ for $i = 1, \ldots, m$, and
\begin{equation} \label{eqn:2d_dir_A}
A = I_{m+1} \otimes L - \begin{bmatrix} C_{ij} \end{bmatrix} \in \R^{(m+1)q^2, 
(m+1)q^2},
\end{equation}
where
\begin{equation} \label{eqn:2d_dir_Cij}
C_{ij} \coloneq \diag \Bigl( 
\sprod{k_{1,1}^2 \phi_i, \phi_j}, \ldots, \sprod{k_{q,1}^2 \phi_i, \phi_j}, 
\ldots, \sprod{k_{1,q}^2 \phi_i, \phi_j}, \ldots, \sprod{k_{q,q}^2 \phi_i, 
\phi_j} \Bigr)
\end{equation}
for $i, j = 0, \ldots, m$.
The matrices $I_{m+1} \otimes L$ and $\begin{bmatrix} C_{ij} \end{bmatrix}$ are 
symmetric positive definite.
\end{theorem}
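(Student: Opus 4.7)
The plan is to follow the structure of the proof of Theorem~\ref{thm:discretization_1d_dir} and extend it to two spatial dimensions by exploiting a tensor-product (Kronecker) structure, then apply the stochastic Galerkin machinery from Lemma~\ref{lem:galerkin_projection} and Corollary~\ref{cor:constant_galerkin_projection}.

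First I would rewrite the FD equations~\eqref{eqn:de_discretized} in matrix form using the lexicographic ordering specified for $U(\xi)$ and $b_0$: the $y$-index changes the slowest, the $x$-index the fastest. With this ordering, the contributions $\tfrac{1}{h^2}(-u_{i-1,j}+2u_{i,j}-u_{i+1,j})$ arising from the $x$-derivative assemble into $I_q \otimes T$, while the contributions $\tfrac{1}{h^2}(-u_{i,j-1}+2u_{i,j}-u_{i,j+1})$ from the $y$-derivative assemble into $T \otimes I_q$, where $T$ is the 1D Dirichlet-Laplacian matrix from~\eqref{eqn:1d_dir_T} (and where the Dirichlet values $u_{0,j}=u_{q+1,j}=u_{i,0}=u_{i,q+1}=0$ simply drop out). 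Thus the discrete Laplacian is exactly $L = I_q \otimes T + T \otimes I_q$, and the reaction term $-k_{i,j}^2 u_{i,j}$ yields the diagonal matrix $D_2(\xi)$. This gives $S(\xi) = L - D_2(\xi)$.

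Next I would verify the positive-definiteness claims. The matrix $T$ is symmetric positive definite by Theorem~\ref{thm:discretization_1d_dir}, with (positive) eigenvalues $\lambda_1,\ldots,\lambda_q$. Since $L$ is the Kronecker sum of $T$ with itself, its eigenvalues are $\lambda_i+\lambda_j$ for $i,j=1,\ldots,q$, all strictly positive, and symmetry is immediate from symmetry of $T$; hence $L$ is symmetric positive definite. The matrix $D_2(\xi)$ is diagonal with entries $k_{i,j}(\xi)^2 > 0$ by the standing assumption $k(x,\xi)>0$, so it is symmetric positive definite for every $\xi \in \Xi$.

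For the stochastic Galerkin part, the argument is parallel to the one in Theorem~\ref{thm:discretization_1d_dir}. Inserting the ansatz~\eqref{eqn:Galerkin_approx} into $S(\xi)\widetilde{U}_m(\xi) = b_0$ and enforcing the Galerkin orthogonality~\eqref{eqn:Galerkin_orthogonality}, the deterministic term $L$ contributes $I_{m+1}\otimes L$ by Corollary~\ref{cor:constant_galerkin_projection}, and the random diagonal term $-D_2(\xi)$ contributes $-[C_{ij}]$ by Lemma~\ref{lem:galerkin_projection}, which also yields the diagonal structure in~\eqref{eqn:2d_dir_Cij} since $D_2(\xi)$ is diagonal. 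This produces~\eqref{eqn:2d_dir_A}. Symmetric positive-definiteness of $I_{m+1}\otimes L$ is inherited from that of $L$, and symmetric positive-definiteness of $[C_{ij}]$ follows from Lemma~\ref{lem:galerkin_projection} applied to $D_2(\xi)$.

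I do not anticipate a real obstacle here; the proof is essentially bookkeeping. The one spot that deserves a careful sentence is the identification of the two $x$- and $y$-derivative blocks with $I_q\otimes T$ and $T\otimes I_q$ under the chosen ordering, since a wrong ordering would swap the factors. Everything else is a direct copy of the 1D argument, with Kronecker sums replacing single factors.
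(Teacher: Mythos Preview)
Your proposal is correct and matches the paper's approach: the paper states Theorem~\ref{thm:discretization_2d_dir} without an explicit proof, evidently regarding it as the routine 2D analogue of Theorem~\ref{thm:discretization_1d_dir}, and your write-up supplies exactly that analogue (Kronecker-sum structure for $L$, positivity via the eigenvalues $\lambda_i+\lambda_j$, and the Galerkin step via Lemma~\ref{lem:galerkin_projection} and Corollary~\ref{cor:constant_galerkin_projection}). Your remark about checking the ordering so that the $x$- and $y$-blocks land in $I_q\otimes T$ and $T\otimes I_q$ respectively is the only point requiring any care, and you have it right.
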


To obtain a second order discretization of absorbing boundary conditions, we 
proceed as described in~\cite[Sect.~10.2.1]{GriffithsDoldSilvester2015}.
This leads to the following result.

\begin{theorem} \label{thm:discretization_2d_abs}
In the above notation, the Helmholtz equation~\eqref{eqn:stochastic_helmholtz} 
on $Q = \oo{0, 1}^2$ with absorbing boundary conditions has the second order FD 
discretization
\begin{equation}
S(\xi) U(\xi) = b_0,
\end{equation}
where the right hand side is
\begin{equation}
b_0 = \begin{bmatrix} \frac{1}{2} F_0 \\ F_1 \\ \vdots \\ F_N \\ \frac{1}{2} 
F_{q+1} \end{bmatrix} \in \R^{(q+2)^2}, \quad
F_j = \begin{bmatrix} \frac{1}{2} f_{0,j} \\ f_{1,j} \\ \vdots \\ f_{q,j} \\ 
\frac{1}{2} f_{q+1,j} \end{bmatrix} \in \R^{q+2}, \quad 0 \leq j \leq q+1,
\end{equation}
and where the matrix is given by
\begin{equation} \label{eqn:2d_abs_S}
S(\xi) = L - \ii D_1(\xi) - D_2(\xi) \in \C^{(q+2)^2, (q+2)^2}
\end{equation}
with block matrices
\begin{align}
L &= D \otimes T + T \otimes D, \\
D_1(\xi) &= \frac{1}{h} \diag \Bigl( D_{1,0}(\xi), D_{1,1}(\xi), \ldots, 
D_{1,q}(\xi), D_{1,q+1}(\xi) \Bigr), \\
D_2(\xi) &= \diag \Bigl(\frac{1}{2} D_{2,0}(\xi), D_{2,1}(\xi), \ldots, 
D_{2,q}(\xi), \frac{1}{2} D_{2,q+1}(\xi) \Bigr),
\end{align}
and $(q+2) \times (q+2)$-blocks $T$ from~\eqref{eqn:1d_abs_T},
\begin{align}
D &= \diag \Bigl( \frac{1}{2}, 1, \ldots, 1, \frac{1}{2} \Bigr), \\
D_{1,j}(\xi) &=
\begin{cases}
\diag(k_{0,j}, k_{1,j}, \ldots, k_{q,j}, k_{q+1,j}), & j = 0, q+1, \\
\diag(k_{0,j}, 0, \ldots, 0, k_{q+1,j}), & j = 1, \ldots, q,
\end{cases} \\
D_{2,j}(\xi) &= \diag \Big( \frac{1}{2} k_{0,j}^2, k_{1,j}^2, \ldots, 
k_{q,j}^2, \frac{1}{2} k_{q+1,j}^2 \Big), \quad j = 0, 1, \ldots, q+1.
\end{align}
The matrices $L$, $D_1(\xi)$ are symmetric positive semidefinite (for all $\xi 
\in \Xi$).
The matrix $D_2(\xi)$ is symmetric positive definite for all $\xi \in \Xi$.

Moreover, the coefficient vectors of the stochastic Galerkin 
approximation~\eqref{eqn:Galerkin_approx} are solutions of the linear algebraic 
system
\begin{equation} \label{eqn:2d_abs_A}
A V = b, \quad
b = \begin{bmatrix} b_0 \\ \vdots \\ b_m \end{bmatrix}, \quad
V = \begin{bmatrix} V_0 \\ \vdots \\ V_m \end{bmatrix},
\end{equation}
where $b_i = 0 \in \R^{(q+2)^2}$ for $i = 1, \ldots, m$, and
\begin{equation}
A = I_{m+1} \otimes L - \ii [B_{ij}] - [C_{ij}] \in \C^{(m+1) (q+2)^2, (m+1) 
(q+2)^2}
\end{equation}
with
\begin{equation} \label{eqn:2d_abs_Bij}
B_{ij} \coloneq \sprod{D_1(\xi) \phi_i(\xi), \phi_j(\xi)}, \quad
C_{ij} \coloneq \sprod{D_2(\xi) \phi_i(\xi), \phi_j(\xi)} \in \R^{(q+2)^2, 
(q+2)^2}
\end{equation}
for $i, j = 0, \ldots, m$.
The matrices $I_{m+1} \otimes L$ and $\begin{bmatrix} B_{ij} \end{bmatrix}$ are 
symmetric positive semidefinite, the matrix $\begin{bmatrix} C_{ij} 
\end{bmatrix}$ is symmetric positive definite.
\end{theorem}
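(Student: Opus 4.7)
The plan is to parallel the proof of Theorem~\ref{thm:discretization_1d_abs}, extending each ingredient to two space dimensions. First, at interior grid points $(x_i, y_j)$ with $1 \le i, j \le q$, I would apply the five-point stencil to obtain equation~\eqref{eqn:de_discretized}, which is second-order consistent and contributes a diagonal entry $k_{i,j}^2$ to $D_2(\xi)$. Second, at a non-corner boundary point I would perform a Taylor expansion of $u$ in the outward normal direction about the boundary point, substitute the normal second derivative via the Helmholtz equation (keeping the tangential second derivative to be handled by central differences along the edge), and plug the result into the absorbing boundary condition~\eqref{eqn:stochastic_bc_abs}. Dividing the resulting equation by $2$ (to match the scaling of the interior equations) produces a factor $1/2$ on the outward term, on the right-hand side, and on the $k^2$-term, exactly as in the 1D derivation leading to~\eqref{eqn:FD_first_derivative_bdry}. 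Third, at a corner I would add the two normal Taylor expansions so that only $u_{xx}+u_{yy}$ appears, eliminate this sum using Helmholtz, and apply the absorbing condition twice; dividing by $4$ then produces the $1/4$ factor seen on the right-hand side and on the $k^2$ contribution at the corner.

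The key algebraic observation is that the Laplacian block $L = D \otimes T + T \otimes D$ encodes all three cases uniformly. At an interior index $D_{ii}=D_{jj}=1$, so the row reduces to $(I \otimes T + T \otimes I)$, the usual 2D discrete Laplacian. At an edge index exactly one factor of $D$ contributes $1/2$, reproducing the single-edge scaling derived above. At a corner both factors contribute $1/2$, producing the $1/4$ diagonal and $-1/(2h^2)$ off-diagonal coefficients that match the corner derivation. Similarly, the scalings $1/h$ in $D_1(\xi)$ and $1/2$ in $D_2(\xi)$ at the boundary blocks encode the remaining boundary contributions, and the stipulation that $D_{1,j}(\xi)$ has only corner-entries at interior values of $j$ reflects the fact that at those points the BC only enters through the two $x$-endpoints. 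Verifying the stated form of $S(\xi)$ therefore reduces to checking three representative rows: one interior, one edge, one corner.

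The structural claims are then routine. Theorem~\ref{thm:discretization_1d_abs} gives that $T$ is symmetric positive semidefinite, while $D$ is clearly symmetric positive definite; the spectral formula for Kronecker products shows that $D\otimes T$ and $T\otimes D$ are each symmetric positive semidefinite, hence so is $L$. The matrix $D_1(\xi)$ is diagonal with non-negative entries (since $k>0$), and $D_2(\xi)$ is diagonal with strictly positive entries, giving the claimed (semi)definiteness. Finally, applying Lemma~\ref{lem:galerkin_projection} componentwise to $S(\xi) = L - \ii D_1(\xi) - D_2(\xi)$ and invoking Corollary~\ref{cor:constant_galerkin_projection} for the $\xi$-independent term $L$ yields the stated block matrix $A = I_{m+1}\otimes L - \ii[B_{ij}] - [C_{ij}]$; the same lemma transports symmetry and positive (semi)definiteness to the block level.

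The main obstacle is the careful case analysis at corners, where the scalar Helmholtz equation only constrains $u_{xx}+u_{yy}$ rather than the individual second derivatives. The device of summing the two normal Taylor expansions, rather than treating each direction independently, is what makes the corner equation second-order consistent and simultaneously compatible with the symmetric Kronecker-product structure $L = D \otimes T + T \otimes D$. Once this trick is in hand, the rest is bookkeeping: matching coefficients block-by-block, and then invoking the already-established stochastic Galerkin machinery.
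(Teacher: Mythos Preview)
Your proposal is correct and follows essentially the same approach as the paper, which does not spell out a proof for this theorem but simply refers to \cite[Sect.~10.2.1]{GriffithsDoldSilvester2015} for the second-order boundary discretization and otherwise relies on the 1D argument of Theorem~\ref{thm:discretization_1d_abs}. Your case analysis (interior/edge/corner), the verification that $L = D\otimes T + T\otimes D$ reproduces the correct coefficients in each case, the (semi)definiteness arguments via Kronecker spectra, and the passage to the Galerkin system via Lemma~\ref{lem:galerkin_projection} and Corollary~\ref{cor:constant_galerkin_projection} are exactly the intended ingredients.
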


\subsection{Point sources}

The source term $f$ in the Helmholtz equation is often a point source, 
typically represented by a Dirac delta distribution, say $f(x) = \delta(x - 
a)$.  In our finite difference approximation in one dimension, 
we discretize $f$ by $1/h$ at $a$ (or at a grid point with smallest distance to 
$a$) and $0$ at the other grid points; see also~\cite{WangJungBiondini2014} for 
a discussion of the discretization of the Dirac distribution.
In two space dimension, we discretize $f(x) = \delta(x-a)$ by $1/h^2$ at~$a$ 
(or at a closest grid point).

Specifically for $f(x) = \delta(x - 1/2)$ and the grid~\eqref{eqn:grid_1d}
or $\delta( (x,y) - (1/2, 1/2))$ and the grid~\eqref{eqn:grid_2d}, let 
$t = \lceil q/2 \rceil = \lfloor (q+1)/2 \rfloor$.
If $q$ is odd, $x_t = 1/2$ is the exact midpoint.  If $q$ is even, $t = q/2$ 
and $x_t = 1/2 - h/2$ is the smaller of the two grid points closest to $1/2$.
We then discretize $f$ by
\begin{equation}
f(x_j) = \begin{cases} 1/h, & j = t, \\ 0, & j \neq t, \end{cases}
\quad \text{or} \quad
f(x_j, y_\ell) = \begin{cases} 1/h^2, & j = \ell = t, \\ 0, & 
\text{else.} 
\end{cases}
\end{equation}

\subsection{Stochastic Galerkin and finite differences in 1D}
\label{sect:Galerkin_FDM_1d}

We discretize the deterministic system of PDEs obtained from the stochastic 
Helmholtz equation with the the stochastic Galerkin method as described in 
Section~\ref{sect:Galerkin_FDM}.

We first discretize the PDE~\eqref{eqn:de_v} with the grid~\eqref{eqn:grid_1d}.
We have
\begin{equation} \label{eqn:de_vj}
- v_j''(x) - \sum_{i = 0}^m c_{ij}(x) v_i(x) = F_j(x), \quad x \in Q = 
\oo{0, 1}, \quad j = 0, 1, \ldots, m.
\end{equation}
Second order central differences yield the approximation
\begin{equation} \label{eqn:de_v_discretized}
\frac{1}{h^2} (-v_j(x_{\ell-1}) + 2 v_j(x_\ell) - v_j(x_{\ell+1})) - \sum_{i = 
0}^m c_{ij}(x_\ell) v_i(x_\ell) = F_j(x_\ell), \quad \ell = 1, \ldots, q.
\end{equation}
Given homogeneous Dirichlet boundary conditions~\eqref{eqn:bc_v_dirichlet}, we 
order the unknowns as
\begin{equation}
V = [v_0(x_1), \ldots, v_0(x_q), v_1(x_1), \ldots, v_1(x_q), \ldots, v_m(x_1), 
\ldots, v_m(x_q)]^\top.
\end{equation}
Then~\eqref{eqn:de_v_discretized} yields the block system~\eqref{eqn:1d_dir_A}.
The boundary conditions~\eqref{eqn:bc_v} are
\begin{equation} \label{eqn:bc_vj_1d}
-v_j'(0) - \ii \sum_{i = 0}^m b_{ij}(0) v_i(0) = 0, \quad
v_j'(1) - \ii \sum_{i = 0}^m b_{ij}(1) v_i(1) = 0, \quad 
j = 0, 1, \ldots, m.
\end{equation}
A second order discretization of $v_j'$ is derived as 
in~\eqref{eqn:FD_first_derivative_bdry}.
Ordering the unknowns as
\begin{equation}
V = [v_0(x_0), \ldots, v_0(x_{q+1}), 
v_1(x_0), \ldots, v_1(x_{q+1}), 
\ldots,
v_m(x_0), \ldots, v_m(x_{q+1})]^\top
\end{equation}
yields the block system~\eqref{eqn:1d_abs_A}.


\small
\bibliographystyle{siam}
\bibliography{helmholtz.bib}

\end{document}